\documentclass[english]{article}
\usepackage{custom_tex}
\usepackage{diagbox}
\begin{document}
\definecolor{ed}{RGB}{225,0,0}
\newcommand{\ed}[1]{\textcolor{ed}{[ED: #1]}}

\title{Distributed linear regression by averaging}

\date{\today}
\author{Edgar Dobriban\footnote{Wharton Statistics Department, University of Pennsylvania. E-mail: \texttt{dobriban@wharton.upenn.edu}.} \, and Yue Sheng\footnote{Graduate Group in Applied Mathematics and Computational Science, Department of Mathematics, University of Pennsylvania. E-mail: \texttt{yuesheng@sas.upenn.edu}.}}

\maketitle

\abstract{ Distributed statistical learning problems arise commonly when dealing with large datasets. In this setup, datasets are partitioned over machines, which compute locally, and communicate short messages. Communication is often the bottleneck. In this paper, we study one-step and iterative \emph{weighted parameter averaging} in statistical \emph{linear models} under \emph{data parallelism}. We do linear regression on each machine, send the results to a central server, and take a weighted average of the parameters. Optionally, we iterate, sending back the weighted average and doing local ridge regressions centered at it. How does this work compared to doing linear regression on the full data?  Here we study the performance loss in \emph{estimation}, \emph{test error}, and \emph{confidence interval length} in high dimensions, where the number of parameters is comparable to the training data size. 

We find the performance loss in one-step weighted averaging, and also give results for iterative averaging. We also find that different problems are affected differently by the distributed framework. Estimation error and confidence interval length increase a lot, while prediction error increases much less. We rely on recent results from random matrix theory, where we develop a new calculus of deterministic equivalents as a tool of broader interest.
}

\section{Introduction}

\begin{figure}
\centering
\includegraphics[scale=0.5]{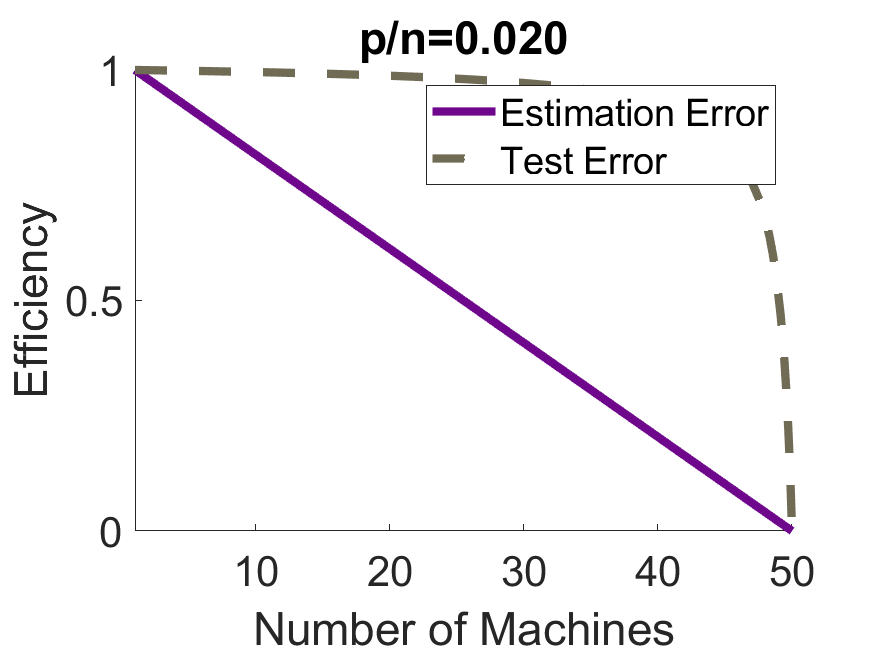}
\caption{How much accuracy do we lose in distributed regression? The plots show the relative efficiency, i.e., the ratio of errors, of the global least squares (OLS) estimator, compared to the distributed estimator averaging the local least squares estimators. This efficiency is at most unity, because the global estimator is more accurate. If the efficiency is close to unity, then averaging is accurate. We show the behavior of estimation and test error, as a function of number of machines. We see that estimation error is \emph{much more affected} than test error. The specific formulas are given in Table \ref{ef-tab}.}
\label{ef-mp}
\end{figure}

Datasets are constantly increasing in size and complexity. This leads to important challenges for practitioners. Statistical inference and machine learning, which used to be computationally convenient on small datasets, now bring an enormous computational burden. 

\emph{Distributed computation} is a universal approach to deal with large datasets. Datasets are partitioned across several machines (or workers). The machines perform computations locally and communicate only small bits of information with each other. They coordinate to compute the desired quantity. This is the standard approach taken at large technology companies, which routinely deal with huge datasets spread over computing units. What are the best ways to divide up and coordinate the work?

The same problem arises when the data is distributed due to privacy, security, or ethical concerns. For instance, medical and healthcare data is typically distributed across hospitals or medical units. The parties agree that they want to aggregate the results. At the same time, they do not want other parties access their data. How can they compute the desired aggregates, without sharing the data?

In both cases, the key question is how to do statistical estimation and machine learning in a distributed setting. And what performance can the best methods achieve? This is a question of broad interest, and it is expected that the area of distributed estimation and computation will grow even more in the future. 

In this paper, we develop precise theoretical answers to fundamental questions in distributed estimation. We study \emph{one-step and iterative parameter averaging} in statistical \emph{linear models} under \emph{data parallelism}. Specifically, suppose in the simplest case that we do linear regression (Ordinary Least Squares, OLS) on each subset of a dataset distributed over $k$ machines, and take an optimal weighted average of the regression coefficients. How do the statistical and predictive properties of this estimator compare to doing OLS on the full data? 

We study the behavior of several learning and inference problems, such as \emph{estimation error}, \emph{test error} (i.e., out-of-sample \emph{prediction error}), and \emph{confidence intervals}. We also consider a high-dimensional (or proportional-limit) setting where the number of parameters is of the same order as the number of total samples (i.e., the size of the training data). We also study an analogous iterative algorithm, where we do local ridge regressions, take averages of the parameters on a central machine, send back the update to the local machines, and then again do local ridge, but where the penalty is centered around the previous mean. Our iterative algorithm falls between several classical methods such as ADMM and DANE, and we discuss connections.

We discover the following key phenomena, some of which are surprising in the context of existing work: 

\begin{enumerate}
\item{\bf Sub-optimality.} One-step averaging is not optimal (even with optimal weights), meaning that it leads to a performance decay. In contrast to some recent work (see the related work section), we find that there is a clear performance loss due to one-step averaging \emph{even if we split the data only into two subsets}. This loss is because the number of parameters is of the same order as the sample size. However, we can quantify this loss precisely. 
\item {\bf Strong problem-dependence.} Different learning and inference problems are affected differently by the distributed framework. Specifically, \emph{estimation error and the length of confidence intervals increases a lot, while prediction error increases less}. The intuition is that prediction is a noisy task, and hence the extra error incurred is relatively smaller. 

\item{\bf Simple form and universality.} The asymptotic efficiencies for one step distributed learning have simple forms that are often \emph{universal}. Specifically, they do not depend on the covariance matrix of the data, or on the sample sizes on the local machines. For instance, the estimation efficiency  \emph{decreases linearly in the number of machines $k$} (see Figure \ref{ef-mp} and Table \ref{ef-tab}).

\item{\bf Iterative parameter averaging has benefits.} We show that simple iterative parameter averaging mechanisms can reduce the error efficiently. We also exhibit computation-statistics tradeoffs: depending on the hyperparameters, we can converge fast to statistically suboptimal solutions; or vice versa.

\end{enumerate}

While there is already a lot of work in this direction (see Section \ref{relw}) our results are new and complementary. The key elements of novelty of our setting are: (1) The sample size and the dimension are comparable, and we do not assume sparsity. 
(2) We have a new mathematical approach, using recent results from asymptotic random matrix theory such as \citep{rubio2011spectral}. Our approach also develops a novel theoretical tool, the \emph{calculus of deterministic equivalents}, and we illustrate how it can be useful in other problems as well. (3) We consider several accuracy metrics (estimation, prediction) in a unified framework of so-called general linear functionals.


\begin{table}[]
\renewcommand{\arraystretch}{2}
\centering
\caption{Estimation, Confidence Interval, and test efficiency as a function of number of machines $k$, the sample size $n$, and the dimension $p$. This is how much smaller the error of the global estimator is compared to the distributed estimator. These functions are plotted and described in Figure \ref{ef-mp}.}
\label{ef-tab}
\begin{tabular}{|c|c|}
\hline
Quantity & Relative efficiency ($n,p,k$)\\ \hline
Estimation \& CIs &\(\displaystyle \frac{n-kp}{n-p} \)  \\ \hline
Test error & \(\displaystyle  \frac{1}{1+\frac{p^2(k-1)}{n(n-kp)}}\) \\ \hline
\end{tabular}
\end{table}

The code for our paper is available at \url{http://www.github.com/dobriban/dist}.

\section{Some related work}
\label{relw}
In this section we discuss some related work. 
There is a great deal of work in computer science and optimization on parallel and distributed computation \citep[see e.g.,][]{bertsekas1989parallel,boyd2011distributed,bekkerman2011scaling}. 
In addition, there are several popular examples of distributed data processing frameworks: for instance MapReduce \citep{dean2008mapreduce} and Spark \citep{zaharia2010spark}.

In contrast, there is less work on understanding the statistical properties, and the inherent computation-statistics tradeoffs, in distributed computation environments. This area has attracted increasing attention only in recent years, see for instance \cite{mcdonald2009efficient,zhang2012communication,zhang2013communication, zhang2013divide, duchi2014optimality, zhang2015divide, braverman2016communication, jordan2016communication, rosenblatt2016optimality, smith2016cocoa, fan2017distributed, lin2017distributed, lee2017communication, battey2018distributed, zhu2018distributed}, and the references therein. See \cite{huo2018aggregated} for a review. We can only discuss the most closely related papers due to space limitations. 

\cite{zinkevich2009slow} study the parallelization of SGD for learning, by reducing it to the study of delayed SGD; giving positive results for low latency "multicore" settings. They give an insightful discussion of the impact of various computational platforms, such as shared memory architectures, clusters, and grid computing. \cite{mcdonald2009efficient} propose averaging methods for special conditional maximum entropy models, showing variance reduction properties.  \cite{zinkevich2010parallelized} expand on this, proposing "parallel SGD" to average the SGD iterates computed on random subsets of the data. Their proof is based on the contraction properties of SGD.

\cite{zhang2013communication} bound the leading order term for MSE of averaged estimation in empirical risk minimization. Their bounds do not explicitly take dimension into account. However, their empirical data example clearly has large dimension $p$, considering a logistic regression with sample size $n = 2.4\cdot 10^8$, and $p=740,000$, so that $n/p \approx 340$. In their experiments, they distribute the data over up to 128 machines. So, our regime, where $k$ is of the same order as $n/p$, matches well their simulation setup. In addition, their concern is on regularized estimators, where they propose to estimate and reduce bias by subsampling.

\cite{liu2014distributed} study distributed estimation in statistical exponential families, connecting the efficiency loss from the global setting to the deviation from full exponential families. They also propose nonlinear KL-divergence-based combination methods, which can be more efficient than linear averaging.

  \cite{zhang2015divide} study divide and conquer kernel ridge regression, showing that the partition-based estimator achieves the statistical minimax rate over all estimators. Due to their generality, their results are more involved, and also their dimension is fixed. \cite{lin2017distributed} improve those results.
\cite{duchi2014optimality} derive minimax bounds on distributed estimation where the number of bits communicated is controlled.

\cite{rosenblatt2016optimality} consider the distributed learning problem in three different settings. The first two settings are fixed dimensional. The third setting is high-dimensional M-estimation, where they study the first order behavior of estimators using prior results from \cite{donoho2013high,el2013robust}. This is possibly the most closely related work to ours in the literature. They use the following representation, derived in the previous works mentioned above: a high-dimensional $M$-estimator can be written as $\hbeta = \beta + r(\gamma) \Sigma^{-1/2}\zeta (1+o_P(1))$, where $\zeta \sim \N(0,I_p/p)$, $\gamma$ is the limit of $p/n$, and $ r(\gamma)$ is a constant depending on the loss function, whose expression can be found in \cite{donoho2013high,el2013robust}.

They derive a relative efficiency formula in this setting, which for OLS takes the form
$$\frac{\E\|\hbeta_{dist}-\beta\|^2}{\E\|\hbeta-\beta\|^2} 
= 1 + \gamma (1-1/k)+O(\gamma^2).$$
In contrast, our result for this case (Theorem \ref{re_ell}) is equal to 
$$\frac{1-\gamma}{1-k\gamma}
= 1 + \gamma \frac{k-1}{1-k\gamma}.$$ 
Thus, our result is much more precise, and in fact exact, while of course being limited to the special case of linear regression.

In a heterogeneous data setting, \cite{zhao2016partially} fit partially linear models, and estimate the common part by averaging.
For model selection problems in GLM, \cite{chen2014split} propose weighted majority voting methods. \cite{lee2017communication} study sparse linear regression, showing that averaging debiased lasso estimators can achieve the optimal estimation rate if the number of machines is not too large. \cite{battey2018distributed} study a similar problem, also including hypothesis testing under more general sparse models. \cite{shi2018massive,banerjee2019divide} show that in problems with non-standard rates, averaging can lead to improved pointwise inference, while decreasing performance in a uniform sense. \cite{volgushev2019distributed} (among other contributions) provide conditions under which averaging quantile regression estimators have an optimal rate. \cite{banerjee2018removing} propose improvements based on communicating smoothed data, and fitting estimators after. \cite{szabo2018adaptive} study estimation methods under communication constraints in nonparametric random design regression model, deriving both minimax lower bounds and optimal methods.

See Section \ref{multi} for more discussion of multi-round methods.

\section{One-step weighted averaging: General linear functionals}
\label{gen_fw}

We consider the standard linear model
\beqs
Y = X\beta+\ep.
\eeqs
Here we have an outcome variable $y$ along with some $p$ covariates $x=(x^{1},\ldots,x^{p})^\top$, and want to understand their relationship. We observe $n$ such data points, arranging their outcomes into the $n\times 1$ vector $Y$, and their covariates into the $n\times p$ matrix $X$. We assume that $Y$ depends linearly on $X$, via some unknown $p\times 1$ parameter vector $\beta$.

We assume there are more samples than training data points, i.e., $n>p$, while $p$ can also be large. In that case, a gold standard is the usual least squares estimator (ordinary least squares or OLS)
\beqs
\hbeta = (X^\top X)^{-1}X^\top Y.
\eeqs
We also assume that the coordinates of the noise $\ep$ are uncorrelated and have variance $\sigma^2$.

Suppose now that the samples are distributed across $k$ machines (these can be real machines, but they can also be---say---sites or hospitals in medical applications, or mobile devices in federated learning). The $i$-th machine has the $n_i\times p$ matrix $X_i$, containing $n_i$ samples, and also the $n_i \times 1$ vector $Y_i$ of the corresponding outcomes for those samples. Thus, the $i$-th worker has access to only a subset of training $n_i$ data points out of the total of $n$ training data points. For instance, if the data points denote $n$ users, then they may be partitioned into $k$ sets based on country of residence, and we may have $n_1$ samples from the United States on one server, $n_2$ samples from Canada on another server, etc. The broad question is: How can we estimate the unknown regression parameter $\beta$ if we need to do most of the computations locally?

Let us write the partitioned data as 
$$X = 
\begin{bmatrix}
    X_1 \\
    \hdots \\
    X_k
\end{bmatrix}
,\,\,
Y = 
\begin{bmatrix}
    Y_1 \\
    \hdots \\
    Y_k
\end{bmatrix}.
$$
We also assume that each \emph{local} OLS estimator $\hat \beta_i = (X_i^\top X_i)^{-1}X_i^\top Y_i$ is well defined, which requires that the number of local training data points $n_i$ must be at least $p$ on each machine (so $n_i\ge p$). We first consider combining the local OLS estimators at a parameter server via one-step weighted averaging. Since they are uncorrelated and unbiased for $\beta$, we consider unbiased weighted estimators 
$$\hbeta_{dist}(w) = \sum_{i=1}^k w_i \hbeta_i$$
with $\sum_{i=1}^k w_i=1$.

We introduce a "general linear functional" framework to study learning tasks such as estimation and prediction in a unified way. In the general framework, we predict \emph{linear functionals} of $\beta$ of the form
$$L_A = A \beta+Z.$$
Here $A$ is a fixed $d\times p$ matrix, and $Z$ is a zero-mean Gaussian noise vector of dimension $d$, with covariance matrix $\Cov{Z} = h\sigma^2 I_d$, for some scalar parameter $h\ge 0$. We denote the covariance matrix between $\ep$ and $Z$ by $N$, so that $\Cov{\ep,Z} = N$. If $h=0$, we say that there is no noise. In that case, we necessarily have $N=0$.

We predict the linear functional $L_A$ via plug-in based on some estimator $\hbeta_0$ (typically OLS or distributed OLS)
$$\hat L_A(\hbeta_0)  = A \hbeta_0.$$
We measure the quality of estimation by the mean squared error 
\begin{align*}
M(\hbeta_0) = \E\| L_A  - \hat L_A(\hbeta_0)\|^2.
\end{align*}

We compute the \emph{relative efficiency} of OLS $\hbeta$ compared to a weighted distributed estimator $\hbeta_{dist} = \hbeta_{dist}(w)$:
$$E(A,d;X_1,\ldots,X_k) 
:= \frac{M(\hbeta)}{M(\hbeta_{dist})}.
$$

The relative efficiency is a fundamental quantity, giving the loss of accuracy due to distributed estimation. 

\subsection{Examples}
\label{examples}

We now show how several learning and inference problems fall into the general framework. See Table \ref{frame} for a concise summary. 

\begin{table}[]
\renewcommand{\arraystretch}{1.5}
\centering
\caption{A general framework for finite-sample efficiency calculations. The rows show the various statistical problems studied in our work, namely estimation, confidence interval formation, in-sample prediction, out-of-sample prediction and regression function estimation. The elements of the row show how these tasks fall in the framework of linear functional prediction described in the main body. }
\label{frame}
\begin{tabular}{|l|l|l|l|l|l|}
\hline
Statistical learning problem  & $L_A$ & $\hat L_A$  & $A$   & $h$ & $N$  \\ \hline
Estimation          & $\beta$  & $\hbeta$  & $I_p$      & 0   &0  \\ \hline
Regression function estimation & $X\beta$ & $X\hbeta$   & $X$   & 0   &0  \\ \hline
Confidence interval  & $\beta_j$ & $\hbeta_j$ & $E_{j}^\top$   & 0     &0 \\ \hline
Test error & $x_t^\top \beta+ \ep_t$ & $x_t^\top \hbeta$ & $x_t^\top$ & 1  &0 \\ \hline
Training error  & $X\beta+ \ep$ & $X\hbeta$ & $X$ & $1$   & $\sigma^2 I_n$ \\ \hline
\end{tabular}
\end{table}

\bitem 
\item  {\bf Parameter estimation}. In parameter estimation, we want to estimate the regression coefficient vector $\beta$ using $\hbeta$. This is an example of the general framework by taking $A=I_p$, and without noise (so that $h=0$).

\item {\bf Regression function estimation}. We can use $X\hbeta$ to estimate the regression function $\E(Y|X)=X\beta$. In this case, the transform matrix is $A=X$, the linear functional is $L_A=X\beta$, the predictor is $\hat{L}_A=X\hbeta$, and there is no noise. 

\item  {\bf Out-of-sample prediction (Test error)}.
For out-of-sample prediction, or test error, we consider a test data point $(x_t,y_t)$, generated from the same model $y_t=x_t^\top \beta+\ep_t$, where $x_t,\ep_t$ are independent of $X,\ep$, and only $x_t$ is observable. We want to use $x_t^\top \hbeta$ to predict $y_t$. 

This corresponds to predicting the linear functional $L_{x_t} = x_t^\top \beta+\ep_t$, so that $A = x_t^\top$, and the noise is $ Z=\ep_t$, which is uncorrelated with the noise $\ep$ in the original problem. 

\item {\bf In-sample prediction (Training error)}. 
For in-sample prediction, or training error, we consider predicting the response vector $Y$, using the model fit $X\hat{\beta}$. Therefore, the functional $L_A$ is
$L_A = Y = X\beta + \ep.$ This agrees with regression function estimation, except for the noise $Z=\ep$, which is identical to the original noise. Hence, the noise scale is $h = 1$, and $N  = \Cov{\ep,Z} = \sigma^2 I_n$.

\item {\bf Confidence intervals}.
To construct confidence intervals for individual coordinates, we consider the normal model $Y \sim \N(X\beta, \sigma^2 I_n)$. Assuming $\sigma^2$ is known, a confidence interval with coverage $1-\alpha$ for a given coordinate $\beta_j$ is
$$\hbeta_j \pm \sigma z_{\alpha/2} V_j^{1/2},$$
where $z_\alpha = \Phi^{-1}(\alpha)$ is the inverse normal CDF, and $V_j$ is the $j$-th diagonal entry of $(X^\top X)^{-1}$.

Therefore, we can measure the difficulty of the problem by $V_j$. The larger $V_j$ is, the longer the confidence interval. This also measures the difficulty of estimating the coordinate $L_A = \beta_j$. This can be fit in our general framework by choosing $A = E_j^\top$, the $1\times p$ vector of zeros, with only a one in the $j$-th coordinate. This problem is noiseless. In this sense, the problem of confidence intervals is the same as the estimation accuracy for individual coordinates of $\beta$.

If $\sigma$ is not known, then we we first need to estimate it in a distributed way. This is an interesting problem in itself, but beyond the scope of our current work.

\eitem

\subsection{Finite sample results}
\label{gen_fw_fs} 
We  now show how to calculate the efficiency explicitly in the general framework. We start with the simpler case where $h=0$. We then have for the OLS estimator 
$$M(\hbeta) = \sigma^2\cdot \tr \left[ (X^\top X)^{-1} A^\top A\right].$$
For the distributed estimator with weights $w_i$ summing to one, given by $\hbeta_{dist}(w) = \sum_i w_i \hbeta_i$, we have 
$$M(\hbeta_{dist}) 
= \sigma^2\cdot \left(\sum_{i=1}^k w_i^2 \cdot \tr \left[ (X_i^\top X_i)^{-1} A^\top A\right]\right ).$$ 
Using a simple Cauchy-Schwarz inequality (see Section \ref{pf:re_ols} for the argument for parameter estimation), we find that the optimal efficiency, for the optimal weights, is
\begin{align}
\label{opt_eff}
E(A; X_1,\ldots,X_k)
=\tr \left[ (X^\top X)^{-1} A^\top A\right]\cdot\sum_{i=1}^k \frac{1}{\tr \left[ (X_i^\top X_i)^{-1} A^\top A\right]}
.
\end{align}

This shows that the key to understanding the efficiency are the traces $\tr \left[ (X_i^\top X_i)^{-1} A^\top A\right].$ 
Proving that the efficiency is at most unity turns out to require the concavity of the matrix functional $1/\tr(X^{-1}A^\top A)$. This is a consequence of classical results in convex analysis, see for instance \cite{davis1957all,lewis1996convex}. For completeness, we give a short self-contained proof in Section \ref{pf:ccv_re2} of the Appendix. 

\begin{proposition}[Concavity for general efficiency, \cite{davis1957all,lewis1996convex}]
\label{ccv_re2}
The function $f(X)=1/\tr(X^{-1}A^\top A)$ is a concave function defined on positive definite matrices. As a consequence, the general relative efficiency for distributed estimation is at most unity for any matrices $X_i$:
$$E(A; X_1,\ldots,X_k)\le 1.$$
\end{proposition}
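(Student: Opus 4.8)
The plan is to prove the concavity of $f(X)=1/\tr(X^{-1}A^\top A)$ first, and then derive the efficiency bound $E\le 1$ from concavity together with the positive homogeneity of $f$. For the concavity, since the set of positive definite matrices is convex and open, it suffices to show that the restriction $t\mapsto f(X+tH)$ is concave near $t=0$ for every positive definite $X$ and every symmetric $H$. Writing $B=A^\top A\succeq 0$, $\phi(t)=\tr((X+tH)^{-1}B)$, and $g=1/\phi$, and using $\phi>0$, concavity of $g$ at $t=0$ is equivalent to the scalar inequality $\phi''\phi\ge 2(\phi')^2$. Differentiating $(X+tH)^{-1}$ twice and evaluating at $t=0$ with $M=X$ gives $\phi'=-\tr(M^{-1}HM^{-1}B)$ and $\phi''=2\tr(M^{-1}HM^{-1}HM^{-1}B)$, so the target reduces to
\[
\tr(M^{-1}HM^{-1}HM^{-1}B)\cdot\tr(M^{-1}B)\ \ge\ \left[\tr(M^{-1}HM^{-1}B)\right]^2 .
\]

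The key simplification is a congruence transformation by $M^{-1/2}$. Setting $\tilde H=M^{-1/2}HM^{-1/2}$ (symmetric) and $S=M^{-1/2}BM^{-1/2}\succeq 0$, cyclic invariance of the trace turns the three traces above into $\tr(\tilde H^2 S)$, $\tr(S)$, and $\tr(\tilde H S)$, so that the inequality becomes $\tr(\tilde H^2 S)\,\tr(S)\ge[\tr(\tilde H S)]^2$. Factoring $S=S^{1/2}S^{1/2}$ and putting $P=S^{1/2}$, $Q=\tilde H S^{1/2}$, these three quantities are exactly $\|Q\|_F^2$, $\|P\|_F^2$, and $\langle P,Q\rangle_F$, and the inequality is precisely the Cauchy--Schwarz inequality in the Frobenius inner product. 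This establishes concavity.

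To pass from concavity to the efficiency bound, I would note that $f$ is positively homogeneous of degree one, $f(tX)=t f(X)$ for $t>0$, since scaling $X$ by $t$ scales $\tr(X^{-1}B)$ by $t^{-1}$. Concavity then upgrades to superadditivity: $f(X+Y)=2f\!\left(\tfrac{X+Y}{2}\right)\ge f(X)+f(Y)$, and by induction $f\!\left(\sum_i G_i\right)\ge\sum_i f(G_i)$. Applying this with $G_i=X_i^\top X_i$ and using that the vertical stacking gives $X^\top X=\sum_i X_i^\top X_i$, I rewrite the optimal efficiency in \eqref{opt_eff} as $E=\left(\sum_i f(X_i^\top X_i)\right)/f(X^\top X)$; superadditivity then yields $E\le 1$.

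The main obstacle is the concavity itself, and within it the choice of reduction: the $M^{-1/2}$-congruence that collapses the awkward triple-trace inequality into a recognizable Cauchy--Schwarz statement is the crux, and I expect the bookkeeping of the two substitutions (first to a line restriction, then the congruence) to be where care is needed. Everything downstream of concavity—homogeneity, superadditivity, and the final rewriting of $E$—is routine.
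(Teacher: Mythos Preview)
Your proof is correct. The derivation of $E\le 1$ from concavity and positive homogeneity matches the paper's argument essentially verbatim (the paper phrases it as ``affine nature'' implying ``sub-additivity,'' but the content is the same superadditivity you write down).

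Where you genuinely diverge is in the proof of concavity itself. The paper also restricts to a line $g(t)=f(X+tV)$, but instead of your congruence-and-Cauchy--Schwarz reduction it diagonalizes $X^{-1/2}VX^{-1/2}=Q\Lambda Q^\top$ to obtain the explicit scalar form
\[
g(t)=\Bigl(\sum_i \tfrac{\alpha_i}{1+t\lambda_i}\Bigr)^{-1},\qquad \alpha_i=(Q^\top X^{-1/2}A^\top A X^{-1/2}Q)_{ii}\ge 0,
\]
and then computes $g''(t)$ directly, arriving at a sum-of-squares expression $\sum_{i<j}\alpha_i\alpha_j(\lambda_i-\lambda_j)^2/[(1+\lambda_i t)^3(1+\lambda_j t)^3]\ge 0$. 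Your route via $\phi''\phi\ge 2(\phi')^2$, the congruence $\tilde H=M^{-1/2}HM^{-1/2}$, $S=M^{-1/2}BM^{-1/2}$, and Frobenius Cauchy--Schwarz with $P=S^{1/2}$, $Q=\tilde H S^{1/2}$ is cleaner and avoids coordinates entirely; it also makes transparent why the general-$A$ case costs nothing extra, whereas the paper carries out the full computation only for $A=I$ and asserts the extension. The paper's version, on the other hand, gives an explicit formula for $g''(t)$ that identifies exactly when equality holds (all $\lambda_i$ equal, i.e., $V$ a multiple of $X$), which your Cauchy--Schwarz argument yields only implicitly through the equality case $Q\propto P$.
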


 For the more general case when $h\neq 0$, we can also find the OLS MSE as 
\begin{align*}
M(\hbeta)  &= \sigma^2\cdot \left[\tr\left((X^\top X)^{-1} A^\top A\right) 
- 2 \tr\left( A (X^\top X)^{-1} X^\top N\right)
+ hd\right].
\end{align*}
For the distributed estimator, we can find, denoting $N_i:=\Cov{ \ep_i, Z}/\sigma^2$,  
$$M(\hbeta_{dist}) 
= \sigma^2\cdot \left(\sum_{i=1}^k w_i^2 \cdot \tr \left[ (X_i^\top X_i)^{-1} A^\top A\right]
- 2 w_i \cdot \tr\left( A(X_i^\top X_i)^{-1} X_i^\top N_i\right)\right ) + \sigma^2 h d.$$ 
Let $a_i = \tr \left[ (X_i^\top X_i)^{-1} A^\top A\right]$, and $b_i $ $= $ $\tr\left( A(X_i^\top X_i)^{-1} X_i^\top N_i\right)$.  The optimal weights can be found from a quadratic optimization problem:  
$$w_i = \frac{\lambda^*+b_i}{a_i}, \,\, 
\lambda^*:= \frac{1-\sum_{i=1}^k\frac{b_i}{a_i}}{\sum_{i=1}^k\frac{1}{a_i}}.$$

The resulting formula for the optimal weights, and for the global optimum, can be calculated explicitly. The details can be found in the supplement (Section \ref{gen_w}).

\section{Calculus of deterministic equivalents}
\label{calc_det_equiv}

\subsection{A calculus of deterministic equivalents in RMT}
\label{calc_det_equiv_intro}

We saw that the relative efficiency depends on the trace functionals $\tr[(X^\top X)^{-1}$ $A^\top A]$, for specific matrices $A$. To find their limits, we will use the technique of \emph{deterministic equivalents} from random matrix theory. This is a method to find the almost sure limits of random quantities. See for example \cite{hachem2007deterministic,couillet2011deterministic} and the related work section below.

For instance, the well known Marchenko-Pastur (MP) law for the eigenvalues of random matrices \citep{marchenko1967distribution,bai2009spectral} states that the eigenvalue distribution of certain random matrices is asymptotically deterministic. More generally, one of the best ways to understand the MP law is that \emph{resolvents are asymptotically deterministic}. Indeed, let $\hSigma = n^{-1} X^\top X$, where $X = Z\Sigma^{1/2}$ and $Z$ is a random matrix with iid entries of zero mean and unit variance. Then the MP law means that for any $z$ with positive imaginary part, we have the equivalence
$$(\hSigma-z I)^{-1} \asymp (x_p\Sigma-z I)^{-1},$$
for a certain scalar $x_p=x(\Sigma,n,p,z)$ (that will be specified later). At this stage we can think of the equivalence entry-wise, but we will make this precise next. The above formulation has appeared in some early works by VI Serdobolskii, see e.g., \cite{serdobolskii1983minimum}, and Theorem 1 on page 15 of \cite{serdobolskii2007multiparametric} for a very clear statement. 

The consequence is that simple linear functionals of the random matrix $(\hSigma-z I)^{-1}$ have a deterministic equivalent based on $(x_p\Sigma-z I)^{-1}$.  In particular, we can approximate the needed trace functionals by simpler deterministic quantities. For this we will take a principled approach and define some appropriate notions for a \emph{calculus of deterministic equivalents}, which allows us to do calculations in a simple and effective way.

First, we make more precise the notion of equivalence. We say that the (deterministic or random) not necessarily symmetric matrix sequences $A_n, B_n$ of growing dimensions are \emph{equivalent}, and write 
$$A_n \asymp B_n$$ if 
$$\lim_{n\to\infty}\left|\tr\left[C_n(A_n-B_n)\right]\right|=0$$ 
almost surely, for any sequence $C_n$ of not necessarily symmetric matrices with bounded trace norm, i.e., such that 
$$\lim\sup\|C_n\|_{tr}<\infty.$$ 

We call such a sequence $C_n$ a \emph{standard sequence}.
Recall here that the trace norm (or nuclear norm) is defined by $\|M\|_{tr}=\tr((M^\top M)^{1/2}) = \sum_i \sigma_i$, where $\sigma_i$ are the singular values of $M$. 

\subsection{General MP theorem}
\label{gen_mp}
To find the limits of the efficiencies, the most important deterministic equivalent will be the following result, essentially a consequence of the generalized Marchenko-Pastur theorem of \cite{rubio2011spectral} (see Section \ref{pf:gen_det} for the argument). 
We study the more general setting of elliptical data. In this model the data samples may have different scalings, having the form $x_i = g_i^{1/2}\Sigma^{1/2} z_i$, for some vector $z_i$ with iid entries, and for datapoint-specific \emph{scale parameters} $g_i$. Arranging the data as the rows of the matrix $X$, that takes the form $$X = \Gamma^{1/2} Z \Sigma^{1/2},$$ where $Z$ and $\Gamma$ are as before: $Z$ has iid standardized entries, while $\Sigma$ is the covariance matrix of the features. Now $\Gamma$ is the diagonal \emph{scaling matrix} containing the scales $g_i$ of the samples. This model has a long history in multivariate statistics \citep[e.g.,][]{mardia1979multivariate}.

\begin{theorem}[Deterministic equivalent in elliptical models, consequence of \cite{rubio2011spectral}] Let the $n \times p$ data matrix $X$ follow the elliptical model
$$X = \Gamma^{1/2} Z\Sigma^{1/2},$$ 
where $\Gamma$ is an $n \times n$ diagonal matrix with non-negative entries representing the scales of the $n$ observations, and $\Sigma$ is a $p\times p$ positive definite matrix representing the covariance matrix of the $p$ features. Assume the following:

\begin{compactenum}
\item The entries of $Z$ are iid random variables with mean zero, unit variance, and finite $8+c$-th moment, for some $c>0$.
\item The eigenvalues of $\Sigma$, and the entries of $\Gamma$, are uniformly bounded away from zero and infinity. 
\item We have $n,p\to \infty$, with $\gamma_p = p/n$ bounded away from zero and infinity.
\end{compactenum}
Let $\hSigma = n^{-1} X^\top X$ be the sample covariance matrix. Then $\hSigma$ is equivalent to a scaled version of the population covariance
$$\hSigma^{-1} \asymp \Sigma^{-1} \cdot e_p.$$

Here $e_p = e_p(n,p,\Gamma)>0$ is the unique solution of the fixed-point equation 
\begin{align*}
1 &= \frac{1}{n} \tr\left[e_p \Gamma (I+\gamma_p e_p \Gamma)^{-1}\right].
\end{align*}
\label{gen_det}
\end{theorem}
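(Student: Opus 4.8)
The plan is to read off the statement as the $z\to 0$ limit of the resolvent deterministic equivalent furnished by the generalized Marchenko--Pastur theorem of \cite{rubio2011spectral}. First I would recast $\hSigma$ as a separable (two-sided) sample covariance matrix. Since $X=\Gamma^{1/2}Z\Sigma^{1/2}$,
$$\hSigma=\frac1n X^\top X=\frac1n\Sigma^{1/2}Z^\top\Gamma Z\Sigma^{1/2},$$
which is exactly the model covered by \cite{rubio2011spectral}, with feature-side covariance $\Sigma$ (dimension $p$) and sample-side weight matrix $\Gamma$ (dimension $n$). Assumptions (1)--(3) are precisely the hypotheses of their theorem: iid entries with a finite $8+c$ moment, spectra of $\Sigma$ and $\Gamma$ bounded away from $0$ and $\infty$, and $\gamma_p=p/n$ bounded. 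Their result then gives, for every complex $z$ in the resolvent set (in particular for $z$ on the negative real axis), a deterministic equivalent of the feature-side resolvent of the scalar form
$$\big(\hSigma-zI_p\big)^{-1}\asymp\bar Q(z):=\big(e_p(z)\,\Sigma-zI_p\big)^{-1},$$
where $e_p(z)$, together with a companion $\tilde e_p(z)$, solves the coupled fixed-point system $e_p(z)=\frac1n\tr[\Gamma(I_n+\tilde e_p(z)\Gamma)^{-1}]$ and $\tilde e_p(z)=\frac1n\tr[\Sigma\,\bar Q(z)]$. The factor multiplying $\Sigma$ is a scalar (rather than a matrix) precisely because the sample-side contribution enters only through a trace of a $\Gamma$-dependent matrix.

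Second, I would reconcile the mode of convergence in \cite{rubio2011spectral} with the notion of $\asymp$ used here. Their equivalent is typically stated through normalized traces $\frac1p\tr[\Theta(Q-\bar Q)]\to0$ against deterministic $\Theta$ of bounded spectral norm, whereas our $\asymp$ tests against (possibly random) standard sequences $C_n$ of bounded trace norm. These are equivalent up to a routine argument: writing the singular value decomposition $C_n=\sum_i\sigma_i u_iv_i^*$ with $\sum_i\sigma_i=\|C_n\|_{tr}$ bounded reduces $\tr[C_n(Q-\bar Q)]$ to a $\sigma$-weighted combination of rank-one bilinear forms $v_i^*(Q-\bar Q)u_i$, each controlled by the bilinear-form version of the equivalent, with the almost-sure statement following from the concentration established in \cite{rubio2011spectral,couillet2011deterministic}.

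Third -- and this is the main obstacle -- I would transfer the equivalence from $z\neq0$ to $z=0$, i.e. to $\hSigma^{-1}$ itself; note this requires $p<n$, equivalently $\gamma_p$ bounded away from $1$ from below, which is exactly the regime in which $\hSigma$ is invertible (and the regime relevant to the local OLS estimators, where $n_i\ge p$). I would approach the origin along the negative real axis, $z=-\eta$ with $\eta\downarrow0$, where $\hSigma+\eta I_p\succ0$. The key input is a hard-edge bound: since $\gamma_p$ is bounded away from $1$ and $\Sigma,\Gamma$ are well conditioned, Bai--Yin-type results \citep{bai2009spectral} give $\liminf_n\lambda_{\min}(\hSigma)\ge\delta>0$ almost surely, so $\|\hSigma^{-1}\|_{op}\le\delta^{-1}$ eventually. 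For a standard sequence $C_n$ I would then split
$$\tr\!\big[C_n(\hSigma^{-1}-\bar Q(0))\big]=\tr\!\big[C_n(\hSigma^{-1}-Q(-\eta))\big]+\tr\!\big[C_n(Q(-\eta)-\bar Q(-\eta))\big]+\tr\!\big[C_n(\bar Q(-\eta)-\bar Q(0))\big],$$
with $Q(z)=(\hSigma-zI_p)^{-1}$. The resolvent identity $\hSigma^{-1}-Q(-\eta)=-\eta\,\hSigma^{-1}Q(-\eta)$ bounds the first term by $\eta\,\|C_n\|_{tr}\,\delta^{-2}$, uniformly in $n$; the third term is $O(\eta)$ by the same identity together with boundedness of $e_p(\cdot)$ and the continuity $e_p(-\eta)\to e_p(0)$; and the middle term vanishes almost surely for each fixed $\eta$ by the equivalence of the second paragraph. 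Taking $\limsup_n$ and then $\eta\to0$ yields $\hSigma^{-1}\asymp\bar Q(0)=\Sigma^{-1}/e_p(0)$.

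Finally, I would identify the scalar and reduce the fixed-point system at $z=0$. Writing $e_p:=1/e_p(0)$ gives $\bar Q(0)=\Sigma^{-1}e_p$, matching the statement. At $z=0$ the companion relation becomes $\tilde e_p(0)=\frac1n\tr[\Sigma(e_p(0)\Sigma)^{-1}]=\frac{p}{n\,e_p(0)}=\gamma_p e_p$, and substituting this and $e_p(0)=1/e_p$ into $e_p(0)=\frac1n\tr[\Gamma(I_n+\tilde e_p(0)\Gamma)^{-1}]$ and clearing denominators gives exactly
$$1=\frac1n\tr\!\big[e_p\Gamma(I_n+\gamma_p e_p\Gamma)^{-1}\big].$$
Uniqueness of the positive solution follows by monotonicity: the right-hand side, viewed as $t\mapsto\frac1n\sum_j \frac{t g_j}{1+\gamma_p t g_j}$ in the diagonal entries $g_j$ of $\Gamma$, is strictly increasing from $0$, with supremum $1/\gamma_p=n/p>1$ precisely when $\gamma_p<1$, so it crosses the value $1$ exactly once. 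I expect the genuine work to lie in the $z\to0$ transfer of the third paragraph, where the hard-edge lower bound on $\lambda_{\min}(\hSigma)$ and the uniform-in-$n$ $O(\eta)$ control are essential; the remaining steps are bookkeeping once the resolvent equivalent of \cite{rubio2011spectral} is in hand.
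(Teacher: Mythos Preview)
Your proposal is correct and follows essentially the same route as the paper's proof: invoke the Rubio--Mestre resolvent equivalent for $z\neq 0$, then transfer to $z=0$ by splitting $\tr[C_p(\hSigma^{-1}-\bar Q(0))]$ into resolvent-identity pieces controlled via a Bai--Yin hard-edge bound on $\lambda_{\min}(\hSigma)$ plus the Rubio--Mestre term, and finally collapse the coupled fixed-point system at $z=0$ to the single equation in the statement. The paper's version differs only in detail: it uses a four-term split (further separating your third term into $(x_p(-\eta)\Sigma+\eta I)^{-1}-(x_p(-\eta)\Sigma)^{-1}$ and $(x_p(-\eta)\Sigma)^{-1}-(x_p(0)\Sigma)^{-1}$), it invokes results of \cite{couillet2014analysis} to rigorously establish the analyticity and strict positivity of the scalar $x_p(z)$ near $z=0$ that you assert as ``continuity $e_p(-\eta)\to e_p(0)$,'' and it relies on the fact that \cite{rubio2011spectral} already states convergence against arbitrary bounded-trace-norm test matrices, rendering the reconciliation in your second paragraph unnecessary.
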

Thus, the inverse sample covariance matrix has a deterministic equivalent in terms of a scaled version of the inverse population covariance matrix. This result does not require the convergence of the aspect ratio $p/n$, or of the e.s.d. of $\Sigma$, and $\Gamma$, as is sometimes the case in random matrix theory. However, if the empirical spectral distribution of the scales $\Gamma$ tends to $G$, the above equation has the limit
$$
\int\frac{se}{1+\gamma se}dG(s)=1.
$$
The usual MP theorem is a special case of the above result where $\Gamma = I_n$. As a result, we obtain the following corollary: 

\begin{corollary}[Deterministic equivalent in MP models] Let the $n \times p$ data matrix $X$ follow the model
$X = Z\Sigma^{1/2},$
where $\Sigma$ is a $p\times p$ positive definite matrix representing the covariance matrix of the $p$ features. Assume the same conditions on $\Sigma$ from Theorem \ref{gen_det}. Then $\hSigma$ is equivalent to a scaled version of the population covariance
$$\hSigma^{-1} \asymp \frac1{1-\gamma_p}\cdot  \Sigma^{-1}.$$
\label{mp_det}
\end{corollary}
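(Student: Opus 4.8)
The plan is to obtain the corollary directly as the $\Gamma = I_n$ specialization of Theorem \ref{gen_det}. First I would check that the hypotheses of the theorem are met under the stated assumptions: the conditions on $Z$ (iid entries with finite $8+c$-th moment) and on $\gamma_p$ carry over unchanged, while the requirement that the entries of $\Gamma$ be uniformly bounded away from zero and infinity is trivially satisfied, since $\Gamma = I_n$ has all entries equal to one. Thus Theorem \ref{gen_det} applies and yields $\hSigma^{-1} \asymp e_p \Sigma^{-1}$, where $e_p$ is the unique positive solution of the fixed-point equation.

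The remaining work is to evaluate $e_p$ explicitly. Substituting $\Gamma = I_n$ into the fixed-point equation gives
$$1 = \frac{1}{n}\tr\left[e_p I_n (I + \gamma_p e_p I_n)^{-1}\right] = \frac{e_p}{1+\gamma_p e_p},$$
since the argument of the trace is the scalar multiple $\tfrac{e_p}{1+\gamma_p e_p} I_n$ of the identity, whose trace is $n$ times that scalar. Solving $1+\gamma_p e_p = e_p$ yields $e_p = 1/(1-\gamma_p)$, which is exactly the claimed scaling factor.

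The one point deserving care is the sign and existence of the solution. The formula $e_p = 1/(1-\gamma_p)$ is positive precisely when $\gamma_p < 1$, which is implicitly required here: when $\gamma_p \ge 1$ the matrix $\hSigma = n^{-1}X^\top X$ is singular and $\hSigma^{-1}$ is undefined. In the regression setting of interest we have $n > p$, so $\gamma_p = p/n < 1$ and the unique positive root guaranteed by the theorem is exactly $1/(1-\gamma_p)$. I do not anticipate any genuine analytic obstacle: all of the probabilistic and random-matrix content is already packaged into Theorem \ref{gen_det}, and the corollary reduces to a one-line algebraic simplification of the fixed-point equation once $\Gamma$ is taken to be the identity.
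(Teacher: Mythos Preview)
Your proposal is correct and matches the paper's approach exactly: the paper's proof is the single sentence ``The proof is immediate, by checking that $e_p = 1/(1-\gamma_p)$ in this case,'' and your argument spells out precisely this verification of the fixed-point equation under $\Gamma = I_n$.
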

The proof is immediate, by checking that $e_p = 1/(1-\gamma_p)$ in this case.


\subsubsection{Related work on deterministic equivalents}

There are several works in random matrix theory on deterministic equivalents. One of the early works is \cite{serdobolskii1983minimum}, see \cite{serdobolskii2007multiparametric} for a modern summary. The name "deterministic equivalents" and technique was more recently introduced and re-popularized by \cite{hachem2007deterministic} for signal-plus-noise matrices. Later \cite{couillet2011deterministic} developed deterministic equivalents for matrix models of the type $\sum_{k=1}^B R_k^{1/2} X_k T_k X_k^\top R_k^{1/2}$, motivated by wireless communications. See the book \cite{couillet2011random} for a summary of related work. See also \cite{muller2016random} for a tutorial. However, many of these results are stated only for some fixed functional of the resolvent, such as the Stieltjes transform. One of our points here is that there is a much more general picture. 


\cite{rubio2011spectral} is one of the few works that explicitly states more general convergence of arbitrary trace functionals of the resolvent. Our results are essentially a consequence of theirs.

However, we think that it is valuable to define a set of rules, a "calculus" for working with deterministic equivalents, and we use those techniques in our paper. Similar ideas for operations on deterministic equivalents have appeared in \cite{peacock2008eigenvalue}, for the specific case of a matrix product. Our approach is more general, and allows many more matrix operations, see below. 

\subsection{Rules of calculus}

The calculus of deterministic equivalents has several properties that simplify calculations. We think these justify the name of \emph{calculus}. Below, we will denote by $A_n,B_n,C_n$ etc, sequences of deterministic or random matrices. See Section \ref{pf:calcrules} in the supplement for the proof. 

\begin{theorem}[Rules of calculus]
\label{calcrules}
The calculus of deterministic equivalents has the following properties. 

\begin{compactenum}
\item {\bf Equivalence.} The $\asymp$ relation is indeed an equivalence relation. 

\item {\bf Sum.} If $A_n\asymp B_n$ and $C_n\asymp D_n$, then $A_n+C_n\asymp B_n+D_n$.
\item {\bf Product.} If $A_n$ is a sequence of matrices with bounded operator norms i.e., $\| A_n\|_{op}<\infty$, and $B_n\asymp C_n$, then $A_nB_n\asymp A_nC_n$.

\item {\bf Trace.} If $A_n\asymp B_n$, then $\tr\{n^{-1}A_n\}- \tr\{n^{-1}B_n\} \to 0$ almost surely.

\item {\bf Stieltjes transforms.} As a consequence, if $(A_n-zI_n)^{-1} \asymp (B_n-zI_n)^{-1} $ for symmetric matrices $A_n,B_n$, then $m_{A_n}(z)-  m_{B_n}(z) \to 0$ almost surely. Here $m_{X_n}(z) = n^{-1} \tr (X_n-zI_n)^{-1}$ is the Stieltjes transform of the empirical spectral distribution of $X_n$.
\end{compactenum}
\end{theorem}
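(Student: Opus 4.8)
The plan is to verify each of the five properties directly from the definition of the equivalence relation $\asymp$, which asserts that $A_n \asymp B_n$ iff $\tr[C_n(A_n-B_n)] \to 0$ almost surely for every standard sequence $C_n$ (i.e.\ $\limsup \|C_n\|_{tr} < \infty$). The strategy throughout is to reduce each claim to a single inequality relating the trace functional to the trace norm, and then to manipulate the standard sequence $C_n$ so that the desired statement becomes an instance of the definition applied to a \emph{different} standard sequence.

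For property~1 (equivalence relation), reflexivity and symmetry are immediate, and transitivity follows by writing $A_n - C_n = (A_n - B_n) + (B_n - C_n)$ and using linearity of the trace inside the definition. Property~2 (sum) is similarly a direct consequence of linearity: $\tr[C_n((A_n+C_n)-(B_n+D_n))] = \tr[C_n(A_n-B_n)] + \tr[C_n(C_n-D_n)]$, and each summand vanishes almost surely by hypothesis. These two properties require no analytic input beyond linearity.

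The crux of the argument, and the property on which the others rest, is the key trace--norm estimate
\[
\left|\tr[M N]\right| \le \|M\|_{op}\,\|N\|_{tr},
\]
the matrix Hölder inequality pairing the operator norm with the trace (nuclear) norm. For property~3 (product), given a standard sequence $C_n$ I would set $C_n' := C_n A_n$ and observe that $\|C_n'\|_{tr} \le \|A_n\|_{op}\,\|C_n\|_{tr}$, so that $C_n'$ is again a standard sequence whenever $\|A_n\|_{op}$ is uniformly bounded; then $\tr[C_n(A_n B_n - A_n C_n)] = \tr[C_n' (B_n - C_n)] \to 0$ by the hypothesis $B_n \asymp C_n$. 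This multiplicative closure of standard sequences under bounded-operator-norm factors is exactly where the boundedness assumption $\|A_n\|_{op} < \infty$ is used, and I expect this to be the main (though still routine) obstacle, since one must be careful that the rescaled sequence stays standard.

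For property~4 (trace), I would take $C_n = n^{-1} I_n$; this is a standard sequence because $\|n^{-1} I_n\|_{tr} = n^{-1} \cdot n = 1$ is bounded, so $\tr[n^{-1}(A_n - B_n)] \to 0$ almost surely is precisely the definition applied to this choice. Finally, property~5 (Stieltjes transforms) is an immediate corollary of property~4: applying it to the equivalent resolvents $(A_n - zI_n)^{-1} \asymp (B_n - zI_n)^{-1}$ gives $n^{-1}\tr(A_n-zI_n)^{-1} - n^{-1}\tr(B_n-zI_n)^{-1} \to 0$, which is the stated convergence $m_{A_n}(z) - m_{B_n}(z) \to 0$ of the Stieltjes transforms.
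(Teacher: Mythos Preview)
Your proposal is correct and follows essentially the same route as the paper's own proof: equivalence and sum via linearity/triangle inequality, product by showing that $C_n A_n$ is again a standard sequence using $\|C_n A_n\|_{tr}\le \|A_n\|_{op}\|C_n\|_{tr}$, trace by testing against $C_n=n^{-1}I_n$, and Stieltjes transforms as an immediate corollary of the trace property. The only cosmetic issue is that in property~2 you reuse the symbol $C_n$ for both the standard test sequence and one of the matrices from the hypothesis; the paper avoids this by calling the test sequence $E_n$.
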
  

In addition, the calculus of deterministic equivalents has additional properties, such as continuous mapping theorems, differentiability, etc. We have developed the differentiability in the follow-up work \citep{dobriban2019one}.

We also briefly sketch several applications of the calculus of deterministic equivalents in Section \ref{apps_calc} in the supplement, to studying the risk of ridge regression in high dimensions, including in the distributed setting, gradient flow for least squares, interpolation in high dimensions, heteroskedastic PCA, as well as exponential family PCA. We emphasize that in each case, including for the formulas of asymptotic efficiencies in the current work, there are other proof techniques, but they tend to be more case-by-case. The calculus provides a unified set of methods, and separate results can be seen as applications of the same approach.

\section{Examples}

We now use the calculus of deterministic equivalents to find the limits of the trace functionals in our general framework. We study each problem in turn. For asymptotics, we consider as before elliptical models. The data on the $i$-th machine takes the form $$X_i = \Gamma_i^{1/2} Z_i \Sigma^{1/2},$$ where $\Gamma_i$ contains the \emph{scales} of the $i$-th machine and $Z_i$ is the appropriate submatrix of $X$.

In this model, it turns out that the efficiencies can be expressed in a simple way via the $\eta$-transform \citep{tulino2004random}. The $\eta$-transform of a distribution $G$ is
$$\eta(x) = \E_G \frac{1}{1+x T},$$
 for all $x$ for which this expectation is well-defined. 
 We will see that the efficiencies can be expressed in terms of the functional inverse $f$ of the $\eta$-transform evaluated at the specific value $1-\gamma$: 
 \beq
f(\gamma, G) = \eta_{G}^{-1}(1-\gamma).
\label{inv_eta}
\eeq

We think of elliptical models where the limiting distribution of the scales $g_1,\ldots,g_n$ is $G$. For some insight on the behavior of $\eta$ and $f$, consider first the case when $G$ is a point mass at unity, $G = \delta_1$. In this case, all scales are equal, so this is just the usual Marchenko-Pastur model. Then, we have $\eta(x) = 1/(1+x)$, while $f(\gamma,G) = \gamma/(1-\gamma)$. See Figure \ref{plotfg} for the plots. The key points to notice are that $\eta$ is a decreasing function of $x$, with $\eta(0)=1$, and $\lim_{x\to\infty}\eta(x) = 0$. Moreover, $f$ is an increasing function on $[0,1]$ with $f(0)=0$, $\lim_{\eta\to1}f(\eta) = +\infty$.  The same qualitative properties hold in general for compactly supported distributions $G$ bounded away from $0$.

\begin{figure}
\begin{subfigure}{.45\textwidth}
  \centering
\includegraphics[scale=0.35]
{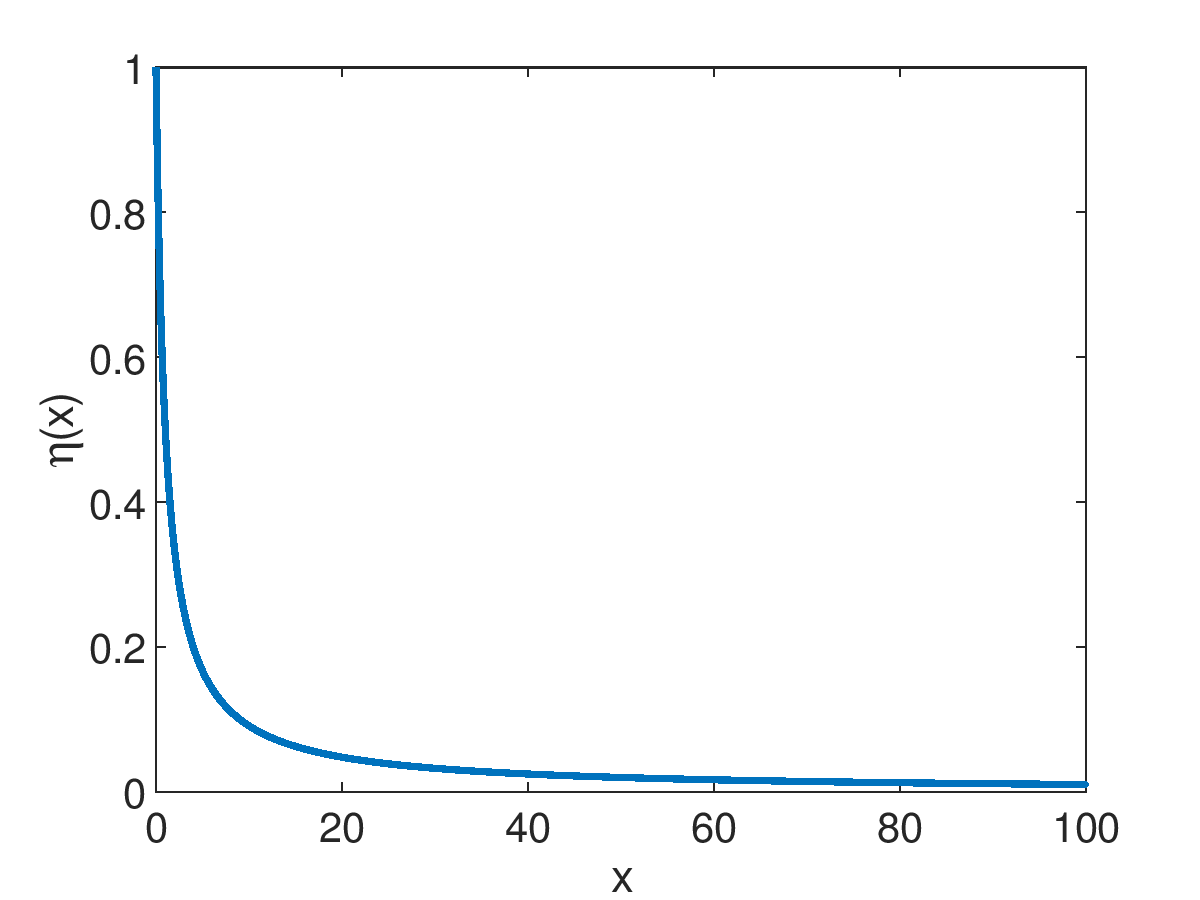}
\end{subfigure}
\begin{subfigure}{.45\textwidth}
  \centering
\includegraphics[scale=0.35]
{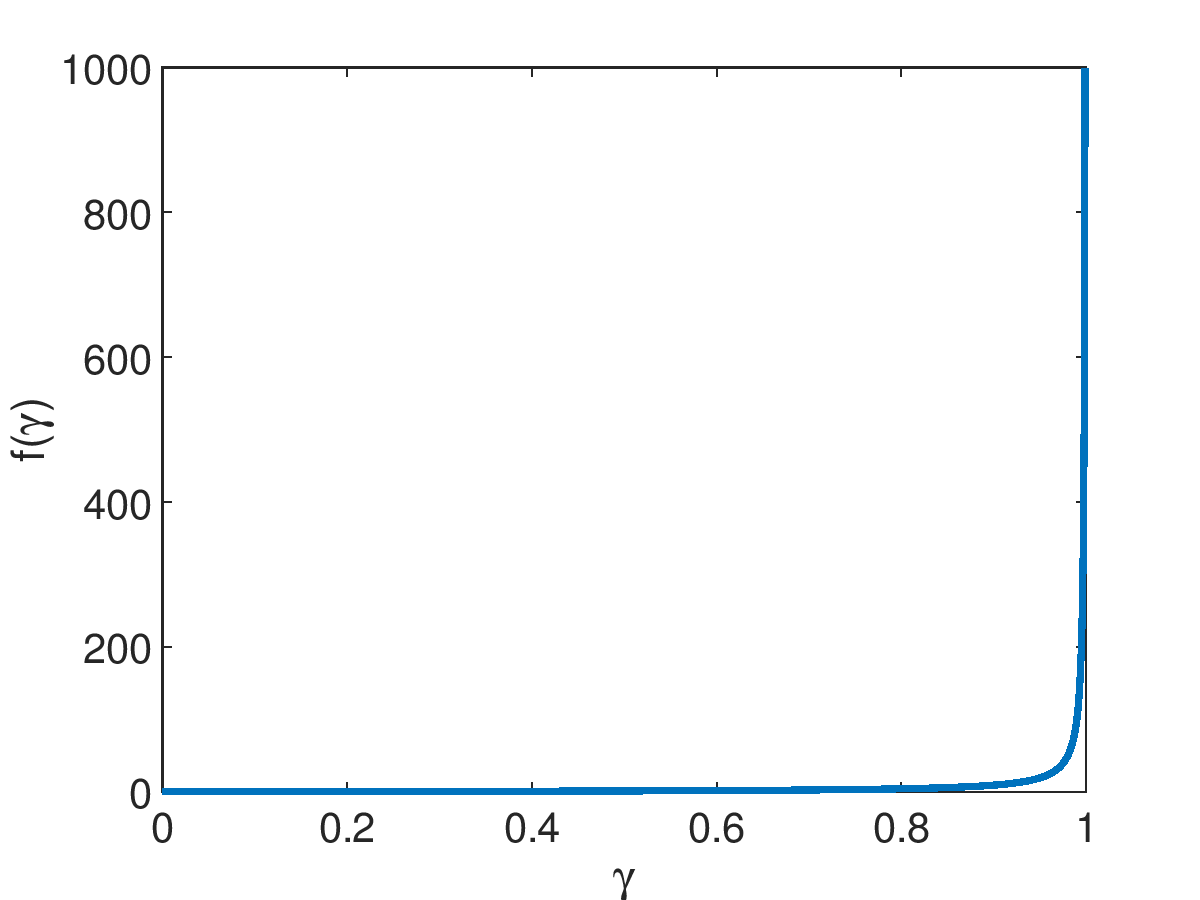}
\end{subfigure}
\caption{Plots of $\eta$ and $f$ for $G$ being the point mass at unity.}
\label{plotfg}
\end{figure}

\subsection{Parameter estimation}

For estimating the parameter, we have $\E\|\beta-\hbeta\|^2$ $=\sigma^2$ $\tr (X^\top X)^{-1}$. We find via \eqref{opt_eff} the estimation efficiency
$$
RE(X_1,\dots,X_k)=
\tr[(X^\top X)^{-1}]\cdot \left[\sum_{i=1}^k \frac{1}{\tr[(X_i^\top X_i)^{-1}]}\right].
$$
Recall that $X^\top X = \sum_{i=1}^k X_i^\top X_i$. 
Recall that the empirical spectral distribution (e.s.d.) of a symmetric matrix $M$ is simply the CDF of its eigenvalues (which are all real-valued). More formally, it is the discrete distribution $F_p$ that places equal mass on all eigenvalues of $M$.

\begin{theorem}[RE for elliptical and MP models]
\label{re_ell}
Under the conditions of Theorem \ref{gen_det}, suppose that, as $n_i\to\infty$ with $p/n_i\to\gamma_i\in(0,1)$, the $e.s.d.$ of $\Gamma$ converges weakly to some $G$, the  $e.s.d.$ of each $\Gamma_i$ converges weakly to some $G_i$, and that the $e.s.d.$ of $\Sigma$ converges weakly to $H$. Suppose that $H$ is compactly supported away from the origin, while $G$ is also compactly supported and does not have a point mass at the origin.
 Then, the RE has almost sure limit
\beqs
ARE =  f(\gamma, G) \cdot 
\sum_{i=1}^k  \frac{1} {f(\gamma_i, G_i)}.
\eeqs
For Marchenko-Pastur models, the RE has the form $(1/\gamma-k)/(1/\gamma-1).$
\end{theorem}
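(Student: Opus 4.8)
The plan is to reduce the relative efficiency to the two trace functionals $\tr[(X^\top X)^{-1}]$ and $\tr[(X_i^\top X_i)^{-1}]$, find the almost sure limit of each via the deterministic equivalent of Theorem~\ref{gen_det}, and then combine. Writing $X^\top X = n\hSigma$, we have $\tr[(X^\top X)^{-1}] = n^{-1}\tr[\hSigma^{-1}]$; since Theorem~\ref{gen_det} gives $\hSigma^{-1} \asymp e_p\,\Sigma^{-1}$, applying the Trace rule of Theorem~\ref{calcrules} with the standard sequence $C_n = n^{-1}I_p$ (whose trace norm is $\gamma_p$, hence bounded) yields
$$n^{-1}\tr[\hSigma^{-1}] - n^{-1}e_p\,\tr[\Sigma^{-1}] \to 0 \quad \text{a.s.}$$
Because $n^{-1}\tr[\Sigma^{-1}] = \gamma_p\cdot p^{-1}\tr[\Sigma^{-1}] \to \gamma\int t^{-1}\,dH(t)$, a finite positive limit since $H$ is supported away from the origin, the first trace converges to $e\,\gamma\int t^{-1}\,dH(t)$, where $e = \lim e_p$ solves the limiting equation $\int \frac{se}{1+\gamma s e}\,dG(s) = 1$. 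Repeating the argument on each of the (finitely many) machines gives $\tr[(X_i^\top X_i)^{-1}] \to e_i\,\gamma_i\int t^{-1}\,dH(t)$, with $e_i$ solving the analogous equation for $(\gamma_i, G_i)$.

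The second step identifies $e$ with $f(\gamma,G)$. Setting $x = \gamma e$ and using $\frac{xs}{1+xs} = 1 - \frac{1}{1+xs}$, the fixed-point equation becomes $\int \frac{1}{1+xs}\,dG(s) = 1-\gamma$, that is $\eta_G(x) = 1-\gamma$, so $x = \eta_G^{-1}(1-\gamma) = f(\gamma,G)$ and hence $e\,\gamma = f(\gamma,G)$; the absence of a point mass of $G$ at the origin guarantees $1-\gamma$ lies in the range of $\eta_G$, so the inverse exists. Therefore $\tr[(X^\top X)^{-1}] \to f(\gamma,G)\int t^{-1}\,dH(t)$ and likewise $\tr[(X_i^\top X_i)^{-1}] \to f(\gamma_i,G_i)\int t^{-1}\,dH(t)$. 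Substituting into the efficiency formula \eqref{opt_eff} with $A = I_p$, the common factor $\int t^{-1}\,dH(t)$ cancels between the global trace and the reciprocal sum of local traces, leaving exactly $ARE = f(\gamma,G)\sum_{i=1}^k 1/f(\gamma_i,G_i)$.

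For the Marchenko--Pastur specialization I would take $G = G_i = \delta_1$, for which $\eta_{\delta_1}(x) = 1/(1+x)$ gives $f(\gamma,\delta_1) = \gamma/(1-\gamma)$ and $1/f(\gamma_i,\delta_1) = (1-\gamma_i)/\gamma_i$. With an even split $n_i = n/k$, so $\gamma_i = k\gamma$, the sum collapses to $\frac{\gamma}{1-\gamma}\cdot k\cdot\frac{1-k\gamma}{k\gamma} = \frac{1-k\gamma}{1-\gamma} = \frac{1/\gamma-k}{1/\gamma-1}$, matching the stated form and the entry $(n-kp)/(n-p)$ of Table~\ref{ef-tab}.

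I expect the main obstacle to be the justification that $e_p \to e$, i.e.\ that the finite-$n$ solution of the fixed-point equation converges to the solution of its weak limit; this is where genuine analysis, rather than the formal calculus, enters. I would handle it using the monotonicity and uniqueness of the fixed point together with the uniform boundedness of the scales, which makes $s\mapsto se/(1+\gamma s e)$ a bounded continuous test function and lets weak convergence of the e.s.d.\ of $\Gamma$ to $G$ pass through. A secondary technical point is that Theorem~\ref{gen_det} is being invoked at the hard edge $z=0$; since every $\gamma_i\in(0,1)$, the smallest eigenvalue of each $\hSigma_i$ stays bounded away from zero almost surely, so the inverses and their traces are uniformly controlled and the equivalence extends to $z=0$.
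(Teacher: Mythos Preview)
Your proof is correct and follows essentially the same route as the paper: compute the almost-sure limit of each trace via the deterministic equivalent $\hSigma^{-1}\asymp e_p\Sigma^{-1}$ of Theorem~\ref{gen_det}, identify $\gamma e = f(\gamma,G)$ through the $\eta$-transform, and cancel the common factor $\int t^{-1}\,dH(t)$ in the ratio. One small remark: the Marchenko--Pastur specialization does not need the even-split assumption $\gamma_i = k\gamma$; for arbitrary splits the constraint $\sum_i n_i = n$ gives $\sum_i 1/\gamma_i = 1/\gamma$, so $\sum_i 1/f(\gamma_i,\delta_1) = \sum_i (1/\gamma_i - 1) = 1/\gamma - k$ directly, which is precisely the universality in $n_i$ that the paper highlights.
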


See Section \ref{estim_ellip_supp} in the supplement for the proof. For MP models, for any finite sample size $n$, dimension $p$, and number of machines $k$, we can approximate the ARE as
\beqs
ARE \approx \frac{n-kp}{n-p}.
\eeqs 
This efficiency for MP models depends on a simple linear way on $k$.  We find this to be a surprisingly simple formula, which can also be easily computed in practice. Moreover, the formula has several more intriguing properties: 

\benum
\item The ARE \emph{decreases linearly} with the number of machines $k$. This holds as long as $ARE\ge 0$. At the threshold case $ARE=0$, there is a phase transition. The reason is that there is a singularity, and the OLS estimator is undefined for at least one machine.

However, we should be cautious about interpreting the linear decrease. For the root mean squared error (RMSE), the efficiency is the square root of the ARE above, and thus does not have a linear decrease.

\item The ARE has two important \emph{universality} properties. 
\benum 
\item 
First, it \emph{does not depend} on how the samples are distributed across the different machines, i.e., it is independent of the specific sample sizes $n_i$. 
\item Second, it \emph{does not depend} on the covariance matrix $\Sigma$ of the samples. This is in contrast to the estimation error of OLS, which does in fact depend on the covariance structure. Therefore, we think that the cancellation of $\Sigma$ in the ARE is noteworthy. 
\eenum 
\eenum

The ARE is also very accurate in simulations. See Figure \ref{f1} for an example. Here we report the results of a simulation where we generate an $n \times p$ random matrix $X$ such that the rows are distributed independently as $x_i \sim \N(0,\Sigma)$. We take $\Sigma$ to be diagonal with entries chosen uniformly at random between 1 and 2. We choose $n>p$, and for each value of $k$ such that $k<n/p$, we split the data into $k$ groups of a random size $n_i$. To ensure that each group has a size $n_i\ge p$, we first let $n_i^0=p$, and then distribute the remaining samples uniformly at random. We then show the theoretical results compared to the theoretical ARE. We observe that the two agree closely.  

\begin{figure}
\begin{subfigure}{.45\textwidth}
  \centering
\includegraphics[scale=0.48]{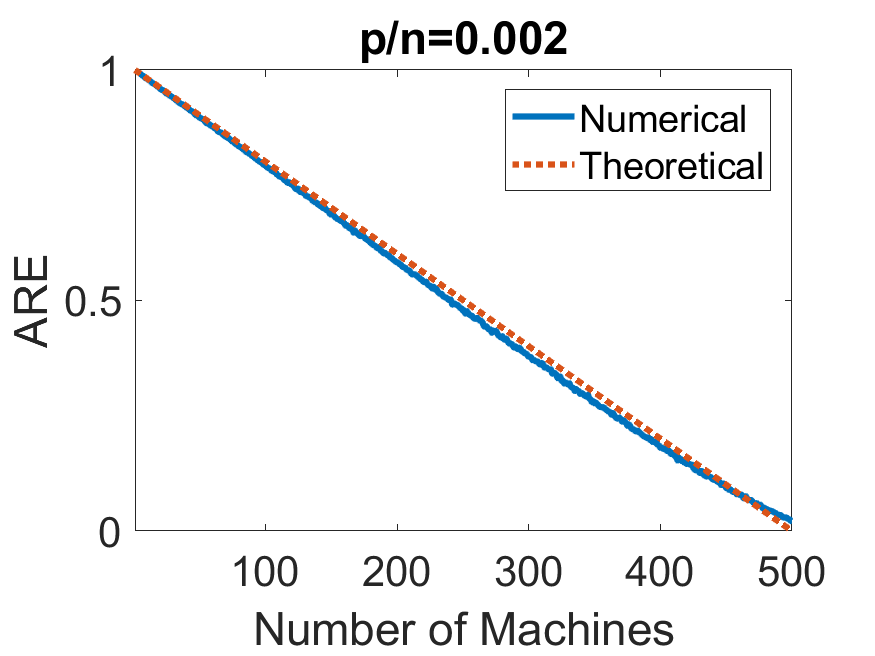}
\end{subfigure}
\begin{subfigure}{.45\textwidth}
  \centering
\includegraphics[scale=0.48]{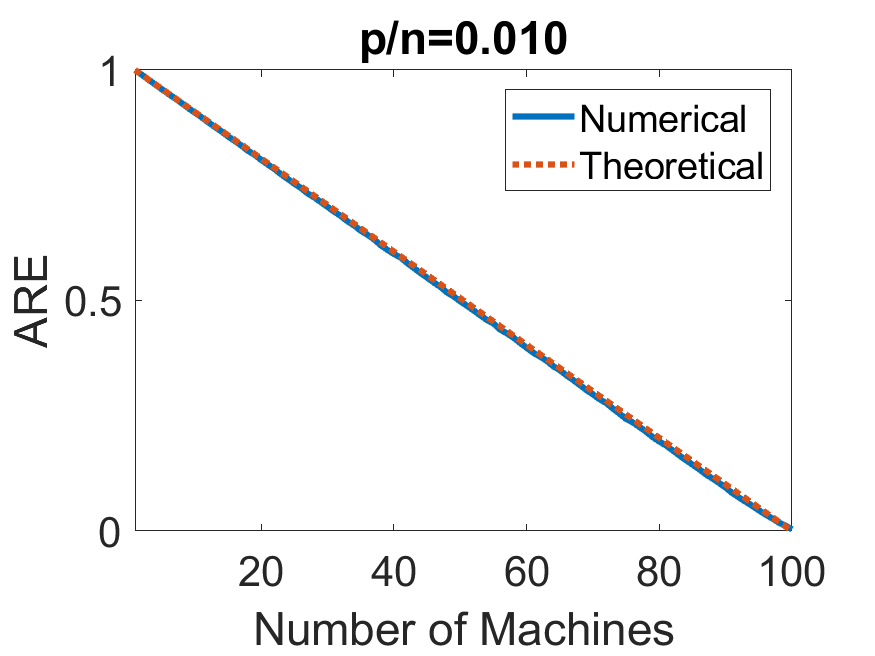}
\end{subfigure}
\caption{Comparison of empirical and theoretical ARE for standard sample covariance matrices. Left: $n=10,000$, $p=20$. Right: $n=10,000$, $p=100$. }
\label{f1}
\end{figure}

\subsection{Regression function estimation}

For estimating the regression function, we have $\E\|X(\beta-\hbeta)\|^2=\sigma^2p$. We then find via equation \eqref{opt_eff} the prediction efficiency
$$
FE(X_1,\dots,X_k)=\sum_{i=1}^k\frac{p}{\tr((X_i^\top X_i)^{-1}X^\top X))}.
$$
For asymptotics, we consider as before elliptical models.

\begin{theorem}[FE for elliptical and MP models]
\label{fe_ell}
Under the conditions of Theorems \ref{gen_det} and \ref{re_ell}, the FE has the almost sure limit
\begin{align*}
FE(X_1,\ldots,X_k)
\to_{a.s.}
\sum_{i=1}^k\frac{1}{1+\left(\frac{1}{\gamma}\E_GT-\frac{1}{\gamma_i}\E_{G_i}T\right)f(\gamma_i, G_i)}
.
\end{align*}
Under Marchenko-Pastur models, the conditions of Corollary \ref{mp_det}, the FE has the almost sure limit
$\frac{\gamma}{1-\gamma}\sum_{i=1}^k\frac{1-\gamma_i}{\gamma_i}.$
\end{theorem}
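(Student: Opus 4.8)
The plan is to reduce the random trace functional in $FE$ to deterministic quantities machine by machine, using the decomposition $X^\top X = \sum_{j=1}^k X_j^\top X_j$ together with the deterministic equivalent of Theorem \ref{gen_det}. Writing $\hSigma_i = n_i^{-1} X_i^\top X_i$, each summand becomes
\[
\frac{p}{\tr((X_i^\top X_i)^{-1}X^\top X)} = \frac{1}{\sum_{j=1}^k \frac{n_j}{n_i}\cdot\frac1p\tr(\hSigma_i^{-1}\hSigma_j)},
\]
so everything reduces to the limits of the cross terms $p^{-1}\tr(\hSigma_i^{-1}\hSigma_j)$. The diagonal term is exact: $\tr(\hSigma_i^{-1}\hSigma_i) = \tr(I_p) = p$, contributing a $1$. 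The work is entirely in the off-diagonal terms $i\ne j$.

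For $i\ne j$, the sample covariances $\hSigma_i$ and $\hSigma_j$ are built from disjoint, hence independent, blocks of data. First I would apply Theorem \ref{gen_det} to machine $i$, giving $\hSigma_i^{-1}\asymp e_i\Sigma^{-1}$ with $e_i$ solving the machine-$i$ fixed-point equation. The key observation is that $p^{-1}\hSigma_j$ is a legitimate standard sequence: since $\hSigma_j\succeq 0$ its trace norm equals $p^{-1}\tr\hSigma_j$, which is bounded because $\E[\hSigma_j] = (\tr\Gamma_j/n_j)\Sigma$ and $\tr\Sigma = \Theta(p)$. Testing the equivalence against it yields $p^{-1}\tr(\hSigma_i^{-1}\hSigma_j) - e_i\, p^{-1}\tr(\Sigma^{-1}\hSigma_j)\to 0$ almost surely. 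Since $\tr(\Sigma^{-1}\hSigma_j) = n_j^{-1}\tr(\Gamma_j Z_j Z_j^\top)$ is a quadratic form in the iid entries of $Z_j$ (note the clean cancellation of $\Sigma$, which explains the covariance-freeness), a direct concentration argument gives $p^{-1}\tr(\Sigma^{-1}\hSigma_j)\to \E_{G_j}T$, and therefore $p^{-1}\tr(\hSigma_i^{-1}\hSigma_j)\to e_i\,\E_{G_j}T$.

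Next I would convert $e_i$ into the inverse $\eta$-transform $f(\gamma_i,G_i)$. Rewriting the fixed-point equation $\int \tfrac{s e_i}{1+\gamma_i s e_i}\,dG_i(s)=1$ as $\int \tfrac{1}{1+\gamma_i s e_i}\,dG_i(s) = 1-\gamma_i$ shows $\eta_{G_i}(\gamma_i e_i)=1-\gamma_i$, i.e.\ $\gamma_i e_i = f(\gamma_i,G_i)$. Combining with $n_j/n_i = \gamma_i/\gamma_j$ gives for the denominator
\[
\frac1p\tr((X_i^\top X_i)^{-1}X^\top X)\to 1 + f(\gamma_i,G_i)\sum_{j\ne i}\frac{\E_{G_j}T}{\gamma_j}.
\]
To simplify the sum I would use the limiting identity $\E_{G_j}T/\gamma_j = \tr(\Gamma_j)/p$, so that $\sum_{j=1}^k \E_{G_j}T/\gamma_j = \tr(\Gamma)/p \to \gamma^{-1}\E_G T$ since $\tr\Gamma = \sum_j\tr\Gamma_j$; subtracting the $j=i$ term yields $\tfrac1\gamma\E_G T - \tfrac1{\gamma_i}\E_{G_i}T$ and hence the stated elliptical formula after summing over $i$. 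The Marchenko--Pastur case is a plug-in: $G=G_i=\delta_1$ gives $\E_G T=\E_{G_i}T=1$ and $f(\gamma_i,G_i)=\gamma_i/(1-\gamma_i)$, and a short algebraic simplification collapses each summand to $\tfrac{\gamma}{1-\gamma}\cdot\tfrac{1-\gamma_i}{\gamma_i}$.

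The main obstacle I anticipate is the rigorous justification of applying the deterministic equivalent against the \emph{random} standard sequence $p^{-1}\hSigma_j$. This is exactly where the independence of distinct machines is essential: I would handle it by conditioning on the data of machine $j$, so that $p^{-1}\hSigma_j$ becomes a deterministic bounded-trace-norm matrix for which Theorem \ref{gen_det} applies to the independent machine $i$, and then remove the conditioning via an almost-sure/Fubini argument. The remaining concentration of $p^{-1}\tr(\Sigma^{-1}\hSigma_j)$ and the bound on $\|p^{-1}\hSigma_j\|_{tr}$ are routine given the moment and boundedness assumptions of Theorem \ref{gen_det}.
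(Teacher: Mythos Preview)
Your argument is correct, and it reaches the same limit for $p^{-1}\tr((X_i^\top X_i)^{-1}X^\top X)$ as the paper does, but the route is genuinely different. The paper does not invoke Theorem~\ref{gen_det} for this term at all. Instead, after the same diagonal/off-diagonal split, it bundles all rows outside machine $i$ into a single block $\tilde X_i=\tilde\Gamma_i^{1/2}\tilde Z_i\Sigma^{1/2}$, cancels $\Sigma$ algebraically to obtain $\tr[\tilde\Gamma_i^{1/2}\tilde Z_i(Z_i^\top\Gamma_i Z_i)^{-1}\tilde Z_i^\top\tilde\Gamma_i^{1/2}]$, and then applies the Rubio--Mestre lemma on concentration of \emph{averages} of quadratic forms row by row (Lemma~\ref{average_quad_form}), with inner matrices $\mathbf C_{(k)}=\tfrac{n_i}{p}(Z_i^\top\Gamma_i Z_i)^{-1}(\tilde\Gamma_i)_{kk}$ that are independent of the $k$-th row of $\tilde Z_i$. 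That lemma is tailor-made for random inner matrices, so the paper never has to justify testing an equivalence against a random sequence.

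Your approach stays at the matrix level and is more in the spirit of the paper's advertised calculus: you replace $\hSigma_i^{-1}$ by $e_i\Sigma^{-1}$ in one stroke and then reduce $\tr(\Sigma^{-1}\hSigma_j)$ to a law of large numbers. The price is the conditioning/Fubini step you flagged, which is indeed the only delicate point; it is sound because Theorem~\ref{gen_det} gives almost-sure convergence for every fixed bounded-trace-norm test sequence, and by independence of the machines you can freeze $p^{-1}\hSigma_j$ pathwise. The paper's approach sidesteps this entirely, at the cost of importing an additional concentration lemma and working row by row. Either way the algebra that follows (the identity $\gamma_i e_i=f(\gamma_i,G_i)$, the bookkeeping $\sum_j \E_{G_j}T/\gamma_j=\gamma^{-1}\E_GT$, and the MP specialization) is the same.
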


See Section \ref{pf:fe_ell} for the proof. This efficiency is more complex than that for estimation error; specifically it generally depends on the individual $\gamma_i$ and not just $\gamma$.

\subsection{In-sample prediction (Training error)}

For in-sample prediction, we start with the well known formula
$$\E||X(\beta-\hat{\beta})+\ep||^2=\sigma^2[n-\tr((X^\top X)^{-1}X^\top X]=\sigma^2(n-p).$$
As we saw, to fit in-sample prediction in the general framework, we need to take the transform matrix $A=X$, the noise $Z=\ep$, and the covariance matrices $N_i=\Cov{\ep_i,Z}/\sigma^2=\Cov{\ep_i,\ep}/\sigma^2$. Then, in the formula for optimal weights we need to take $a_i=\tr[(X_i^\top X_i)^{-1}X^\top X]$ and $b_i=\tr(X(X_i^\top X_i)^{-1}X_i^\top N_i)=\tr[(X_i^\top X_i)^{-1}X_i^\top N_iX]=\tr[(X_i^\top X_i)^{-1}X_i^\top X_i]=p$. Therefore, the optimal error for distributed regression is achieved by the weights 
$$
w_i=\frac{\lambda-b_i}{a_i}=\frac{\lambda-p}{a_i},~~\lambda=\frac{1-\sum_{i=1}^k\frac{b_i}{a_i}}{\sum_{i=1}^k\frac{1}{a_i}}=\frac{1}{\sum_{i=1}^k\frac{1}{a_i}}-p.
$$
Plugging these into $M(\hat{\beta}_{dist})$ given in the general framework, we find
$$
M(\hat{\beta}_{dist})=\sigma^2\left(n-2p+\frac{1}{\sum_{i=1}^k\frac{1}{a_i}}\right),~~a_i=\tr((X_i^\top X_i)^{-1}X^\top X).
$$
Thus, the optimal in-sample prediction efficiency is 
$$IE(X_1,\dots, X_k)=\frac{n-p}{n-2p+\frac{1}{\sum_{i=1}^k\frac{1}{\tr((X_i^\top X_i)^{-1}X^\top X)}}}.$$

For asymptotics in elliptical models, we find: 
\begin{theorem}[IE for elliptical and MP models]
\label{IE_mp_thm}
Under the conditions of Theorems \ref{gen_det} and \ref{re_ell}, the IE has the almost sure limit
\begin{align*}
IE(X_1,\ldots,X_k)
\to_{a.s.}
\frac{1-\gamma}{1-2\gamma+\frac{1}{\sum_{i=1}^k\psi(\gamma_i, G_i)}}
,
\end{align*}
where $\psi$ is the following functional of the distributions $G_i$ and $G$, depending on the inverse of the $\eta$-transform $f$ defined in equation \eqref{inv_eta}:
$$
\psi(\gamma_i, G_i)=\frac{1}{\gamma+(\E_GT-\frac{\gamma}{\gamma_i}\E_{G_i}T)f(\gamma_i,G_i)}.
$$
Under the conditions of Corollary \ref{mp_det}, the IE has the almost sure limit
$$
IE(X_1,\ldots,X_k)
\to_{a.s.}
\frac{1-\gamma}{1-2\gamma+\frac{\gamma(1-\gamma)}{1-k\gamma}} 
= \frac{1}{1+\frac{(k-1)\gamma^2}{(1-k\gamma)(1-\gamma)}}.
$$
\end{theorem}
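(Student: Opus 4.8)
The plan is to reduce everything to the almost-sure limit of the single trace functional $a_i = \tr((X_i^\top X_i)^{-1}X^\top X)$, since the finite-sample formula for $IE$ already expresses it through $\sum_{i}1/a_i$. Writing $\hSigma = n^{-1}X^\top X$ and $\hSigma_i = n_i^{-1}X_i^\top X_i$, I would first record the elementary identities $a_i = (n/n_i)\tr(\hSigma_i^{-1}\hSigma)$ and $\hSigma = \sum_{j=1}^k (n_j/n)\hSigma_j$, so that $\tr(\hSigma_i^{-1}\hSigma) = \sum_{j}(n_j/n)\tr(\hSigma_i^{-1}\hSigma_j)$. The diagonal term $j=i$ is exactly $\tr(\hSigma_i^{-1}\hSigma_i)=p$, which is deterministic; the real work is in the off-diagonal terms $\tr(\hSigma_i^{-1}\hSigma_j)$ with $j\neq i$.

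For those cross terms I would exploit that $\hSigma_i$ and $\hSigma_j$ are built from data on different machines and hence independent. Conditioning on machine $i$ and averaging over $Z_j$ gives $\E_{Z_j}[\hSigma_j] = (n_j^{-1}\tr\Gamma_j)\,\Sigma$, so the conditional mean of the cross term is $(n_j^{-1}\tr\Gamma_j)\tr(\hSigma_i^{-1}\Sigma)$. A concentration argument for this quadratic-form-type trace (the fluctuations of $\tr(\hSigma_i^{-1}\hSigma_j)$ around its conditional mean vanish almost surely) reduces the problem to evaluating $\tr(\hSigma_i^{-1}\Sigma)$. Here I would apply the elliptical deterministic equivalent of Theorem \ref{gen_det}, namely $\hSigma_i^{-1}\asymp e_i\Sigma^{-1}$, against the standard sequence $p^{-1}\Sigma$ (which has bounded trace norm), yielding $p^{-1}\tr(\hSigma_i^{-1}\Sigma)\to e_i$ and hence $\tr(\hSigma_i^{-1}\hSigma_j)\approx (n_j^{-1}\tr\Gamma_j)\,p\,e_i$, where $e_i$ is the limiting fixed-point scalar on machine $i$.

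The next step is bookkeeping. Using $n_i/n\to\gamma/\gamma_i$ and the consistency relations $\sum_i 1/\gamma_i = 1/\gamma$ and $\gamma^{-1}\E_G T = \sum_j \gamma_j^{-1}\E_{G_j}T$ (both forced by $\Gamma$ being the concatenation of the $\Gamma_j$), I can collapse $\sum_{j\neq i}(n_j/n)(n_j^{-1}\tr\Gamma_j)$ into $\E_G T - (\gamma/\gamma_i)\E_{G_i}T$, so that the dependence on the individual $G_j$ for $j\neq i$ disappears and only $G$ and $G_i$ survive. This gives $a_i \to p[1 + e_i(\gamma_i\gamma^{-1}\E_G T - \E_{G_i}T)]$. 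I then convert $e_i$ into the $\eta$-transform language: the fixed-point equation of Theorem \ref{gen_det} reads $\E_{G_i}[Te_i/(1+\gamma_i Te_i)]=1$, and the substitution $x=\gamma_i e_i$ rewrites it as $\eta_{G_i}(x)=1-\gamma_i$, whence $\gamma_i e_i = f(\gamma_i,G_i)$ by \eqref{inv_eta}. Substituting back produces exactly $1/a_i \to (\gamma/p)\,\psi(\gamma_i,G_i)$, and then dividing numerator and denominator of the finite-sample $IE$ by $n$ and passing to the limit yields the stated elliptical formula.

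Finally, for the Marchenko--Pastur specialization I would set $G=G_i=\delta_1$, so $\E_G T=\E_{G_i}T=1$ and $f(\gamma_i,G_i)=\gamma_i/(1-\gamma_i)$ by Corollary \ref{mp_det}; a short computation gives $\psi(\gamma_i,G_i)=(1-\gamma_i)/(\gamma_i(1-\gamma))$ and $\sum_i\psi = (1-k\gamma)/(\gamma(1-\gamma))$ using $\sum_i 1/\gamma_i=1/\gamma$, from which the claimed closed form follows after algebraic simplification. The main obstacle is the rigorous treatment of the off-diagonal cross terms: since $\hSigma_j$ has trace norm of order $p$ it is not itself a standard sequence, so one cannot feed it directly into the calculus of deterministic equivalents. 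The clean route is the conditioning-plus-concentration argument above, and the care needed is in establishing that the fluctuations of $\tr(\hSigma_i^{-1}\hSigma_j)$ vanish almost surely (e.g. via a trace/quadratic-form concentration bound under the $8+c$ moment assumption) strongly enough to combine with the deterministic equivalent.
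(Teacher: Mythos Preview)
Your plan is correct and follows the same overall architecture as the paper: decompose $a_i$ into the diagonal term $p$ plus cross terms $\tr[(X_i^\top X_i)^{-1}X_j^\top X_j]$, handle the cross terms by concentration of quadratic forms (exploiting the independence between machines), and then identify the resulting deterministic limit. Your bookkeeping and the MP specialization are exactly right.

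There are two organizational differences worth noting. First, the paper groups all $j\neq i$ into a single block $\tilde X_i$ and writes the cross contribution as $\tr[\tilde\Gamma_i^{1/2}\tilde Z_i(Z_i^\top\Gamma_i Z_i)^{-1}\tilde Z_i^\top\tilde\Gamma_i^{1/2}]$; crucially, it observes that $\Sigma$ \emph{cancels out} of this expression entirely, so the problem reduces to the ``null'' case $\Sigma=I$. This makes the concentration step cleaner because the inner matrix is simply $(Z_i^\top\Gamma_i Z_i)^{-1}$ with no $\Sigma$ in sight, and its limiting trace is $f(\gamma_i,G_i)$ directly. You instead keep $\Sigma$ in play, reduce to $\tr(\hSigma_i^{-1}\Sigma)$ after concentration, and then invoke Theorem~\ref{gen_det} against the standard sequence $p^{-1}\Sigma$; this is perfectly valid but uses a heavier tool where the paper's cancellation makes it unnecessary. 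Second, the paper names and applies a specific lemma (Lemma~4 of \cite{rubio2011spectral}, concentration of an \emph{average} of quadratic forms) once to all $n-n_i$ rows of $\tilde Z_i$ simultaneously, rather than treating each $j$ separately; this is exactly the ``concentration argument'' you gesture at, and it gives the almost-sure convergence you need under the stated moment assumptions without further work.
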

See Section \ref{pf:IE_mp_thm} for the proof. This efficiency does not depend on a simple linear way on $k$, but rather via a ratio of two linear functions of $k$. However, it can be checked that many of the properties (e.g., monotonicity) for ARE still hold here.

\subsection{Out-of-sample prediction (Test error)}

In out-of-sample prediction, we consider a test datapoint $(x_t,y_t)$, generated from the same model $y_t=x_t^\top \beta+\ep_t$, where $x_t,\ep_t$ are independent of $X,\ep$, and only $x_t$ is observable. We want to use $x_t^\top \hbeta$ to predict $y_t$. We compare the prediction error of two estimators: 

$$OE(x_t;X_1,\ldots,X_k) 
:= \frac{\EE{(y_t-x_t^\top \hbeta)^2}}{\EE{(y_t-x_t^\top \hbeta_{dist})^2}}.
$$

In our general framework, we saw that this corresponds to predicting the linear functional $x_t^\top\beta+\ep_t$. Based on equation \eqref{opt_eff}, 
the optimal out-of-sample prediction efficiency is
\begin{align*}
OE(x_t; X_1,\ldots,X_k)
=\frac{1+ x_t^\top (X^\top X)^{-1}x_t}
{1 + \frac{1}{\sum_{i=1}^k \frac{1}{x_t^\top (X_i^\top X_i)^{-1} x_t}}}
.
\end{align*}

For asymptotics in elliptical models, we find the following result. Since the samples have the form $x_i = g_i^{1/2} \Sigma^{1/2} z_i$, the test sample depends on a scale parameter $g_t$. 

\begin{theorem}[OE for elliptical and MP models]
\label{OE_mp_thm}
Under the conditions of Theorems \ref{gen_det} and \ref{re_ell}, the OE has the almost sure limit, conditional on $g_t$
\begin{align*}
OE(x_t; X_1,\ldots,X_k)
\to_{a.s.}
\frac{1+ g_t\cdot  f(\gamma, G)}
{1 + \frac{g_t}{\sum_{i=1}^k \frac{1}{f(\gamma_i, G_i)}}}
.
\end{align*}
For Marchenko-Pastur models under the conditions of Corollary \ref{mp_det}, the OE has the almost sure limit
$$
\frac{\frac{1}{1-\gamma}}
{1 + \frac{\gamma}{1- k\gamma}}
= 
\frac{1}
{1 + \frac{(k-1)\gamma^2}{1- k\gamma}}.
$$
\end{theorem}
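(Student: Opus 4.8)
The plan is to start from the finite-sample expression for $OE$ displayed just above the statement and reduce everything to finding the almost sure limits of the two quadratic forms $x_t^\top (X^\top X)^{-1} x_t$ and $x_t^\top (X_i^\top X_i)^{-1} x_t$. Writing the test point in elliptical form $x_t = g_t^{1/2}\Sigma^{1/2} z_t$, with $z_t$ having iid standardized entries independent of the training data, and using $X^\top X = n\hSigma$, the global quadratic form becomes $x_t^\top (X^\top X)^{-1} x_t = (g_t/n)\, z_t^\top \Sigma^{1/2}\hSigma^{-1}\Sigma^{1/2} z_t$, and similarly on each machine with $\hSigma_i$ in place of $\hSigma$ and $n_i$ in place of $n$.

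First I would dispose of the randomness of the test point. Conditioning on $X$ (hence on $\hSigma$), the vector $z_t$ is independent of $M := \Sigma^{1/2}\hSigma^{-1}\Sigma^{1/2}$, which has bounded operator norm almost surely since the smallest eigenvalue of $\hSigma$ is bounded away from zero (here $\gamma<1$ is essential). A standard trace/concentration lemma for quadratic forms then gives $z_t^\top M z_t - \tr M \to 0$ almost surely; since $(g_t/n)(z_t^\top M z_t - \tr M) = (g_t/n)\,O(\sqrt p) \to 0$ while $(g_t/n)\tr M = O(\gamma_p)$, the forms $x_t^\top(X^\top X)^{-1}x_t$ and $(g_t/n)\tr(\Sigma\hSigma^{-1})$ share the same limit.

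Next I would evaluate $(1/n)\tr(\Sigma\hSigma^{-1})$ via the calculus of deterministic equivalents. By Theorem \ref{gen_det}, $\hSigma^{-1}\asymp e_p\,\Sigma^{-1}$; the product rule with the bounded-operator-norm matrix $\Sigma$ gives $\Sigma\hSigma^{-1}\asymp e_p I_p$, and the trace rule (with standard sequence $n^{-1}I_p$, whose trace norm $\gamma_p$ is bounded) yields $(1/n)\tr(\Sigma\hSigma^{-1}) - e_p\gamma_p \to 0$. The key algebraic identity is that $e_p\gamma_p \to f(\gamma,G)$: substituting $u=\gamma e$ into the limiting fixed-point equation $\int se/(1+\gamma se)\,dG(s)=1$ rewrites it as $\E_G[1/(1+uT)] = 1-\gamma$, i.e. $\eta_G(u)=1-\gamma$, so $u = \gamma e = \eta_G^{-1}(1-\gamma) = f(\gamma,G)$. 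Hence $x_t^\top(X^\top X)^{-1}x_t \to g_t f(\gamma,G)$, and the identical argument on machine $i$ (with $\Gamma_i$, $\gamma_i$, $G_i$) gives $x_t^\top(X_i^\top X_i)^{-1}x_t \to g_t f(\gamma_i,G_i)$. Plugging these positive finite limits into the finite-sample formula and using continuity of the arithmetic operations yields the general elliptical limit.

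Finally I would specialize to the Marchenko-Pastur case, where $G=\delta_1$ forces $g_t=1$ and $f(\gamma,\delta_1)=\gamma/(1-\gamma)$, so the numerator is $1+\gamma/(1-\gamma)=1/(1-\gamma)$. For the denominator I would use $\sum_i 1/f(\gamma_i,G_i)=\sum_i (1-\gamma_i)/\gamma_i = \sum_i 1/\gamma_i - k = 1/\gamma - k$, where $\sum_i 1/\gamma_i = \sum_i n_i/p = n/p = 1/\gamma$; this gives denominator $1+\gamma/(1-k\gamma)$, and the stated factored form follows from $1-k\gamma+(k-1)\gamma^2 = (1-\gamma)(1-(k-1)\gamma)$. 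The main obstacle is the first step: cleanly decoupling the two sources of randomness and justifying the quadratic-form concentration, since the deterministic-equivalent relation $\asymp$ is phrased against standard sequences, whereas the replacement $z_t^\top M z_t \approx \tr M$ must be argued conditionally on $X$ and only then combined with the almost sure deterministic equivalent for the trace.
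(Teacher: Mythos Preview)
Your argument is correct, and it follows the same two-stage skeleton as the paper: first reduce the quadratic form $x_t^\top(X_i^\top X_i)^{-1}x_t$ to a trace via concentration of quadratic forms, then evaluate the trace. Where you diverge is in how you handle $\Sigma$. The paper observes at the outset that $\Sigma$ cancels algebraically: since $X_i^\top X_i = \Sigma^{1/2} Z_i^\top \Gamma_i Z_i \Sigma^{1/2}$ and $x_t = g_t^{1/2}\Sigma^{1/2}z_t$, one has directly
\[
x_t^\top(X_i^\top X_i)^{-1}x_t = g_t\, z_t^\top (Z_i^\top \Gamma_i Z_i)^{-1} z_t,
\]
so after concentration the target is simply $g_t\cdot\tr(Z_i^\top\Gamma_i Z_i)^{-1}\to g_t\,f(\gamma_i,G_i)$, with no need to invoke Theorem~\ref{gen_det} or the product rule. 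You instead keep $\Sigma$ in place, concentrate to obtain $(g_t/n)\tr(\Sigma\hSigma^{-1})$, and then use the deterministic-equivalent calculus (Theorem~\ref{gen_det} plus the product and trace rules) to identify that trace with $g_t\,\gamma_p e_p \to g_t\,f(\gamma,G)$. Both routes are valid; the paper's is shorter and, because it avoids Theorem~\ref{gen_det}, actually works under the weaker $4+c$ moment assumption (only the quadratic-form lemma is needed), a point the paper remarks on explicitly. Your route is a clean illustration of how the calculus of equivalents handles such functionals when no cancellation is apparent. One small quibble: your claim that $z_t^\top M z_t - \tr M = O(\sqrt{p})$ is only an in-probability bound; for almost sure convergence you get $o(p)$ via Borel--Cantelli from Lemma~\ref{quad_form}, which after dividing by $n$ still gives what you need. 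The ``main obstacle'' you flag is in fact already resolved by your own two-step decoupling.
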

See Section \ref{pf:OE_mp_thm} for the proof.  If the scale parameter $g_t$ is random, then the OE typically does not have an almost sure limit, and converges in distribution to a random variable instead. We mention that Theorem \ref{OE_mp_thm} holds under even weaker conditions, if we are only given the $4+c$-th moment of $z_1$ instead of $8+c$-th one. The argument is slightly different, and is presented in the location referenced above.

One can check that that $OE \ge RE$. Thus, out-of-sample prediction incurs a smaller efficiency loss than estimation. The intuition is that the out-of-sample prediction always involves a fixed loss due to the \emph{irreducible noise} in the test sample, which "amortizes" the problem. Moreover, 
$$OE\geq IE\geq RE.$$ 
The intuition here is that IE incurs a smaller fixed loss than OE, because the noise in the training set is effectively reduced, as it is already partly fit by our estimation process. So the graph of IE will be in between the other two  criteria. See Figure \ref{aeoeie}. We also see that the IE is typically very close to OE.

In addition, the increase of the \emph{reducible} part of the error is the same as for estimation error. The prediction error has two components: the irreducible noise, and the reducible error. The reducible error has the same behavior as for estimation, and thus on figure \ref{aeoeie} it would have the same plot as the curve for estimation.

\begin{figure}
\begin{subfigure}{.45\textwidth}
  \centering
\includegraphics[scale=0.48]{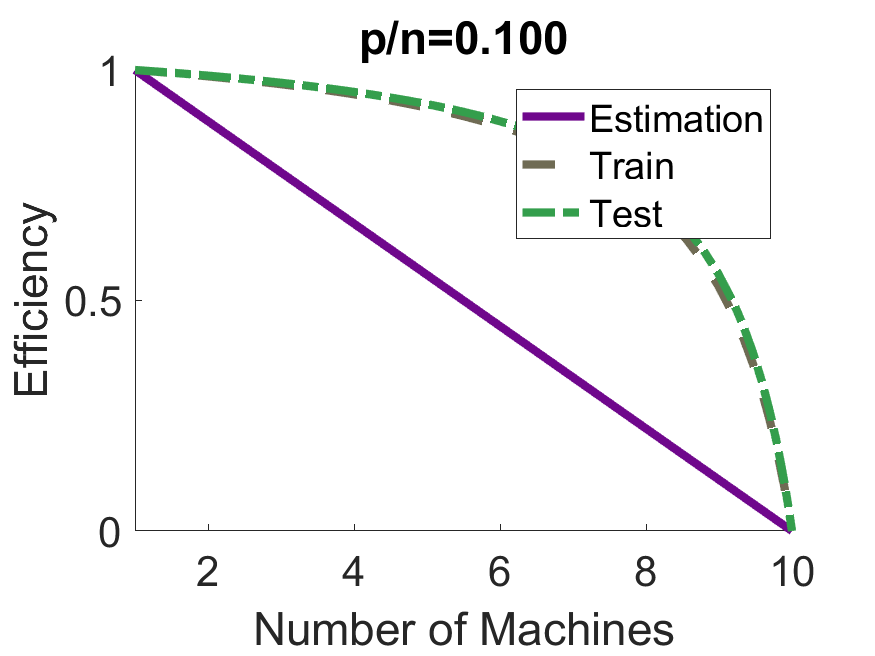}
\end{subfigure}
\begin{subfigure}{.45\textwidth}
  \centering
\includegraphics[scale=0.48]{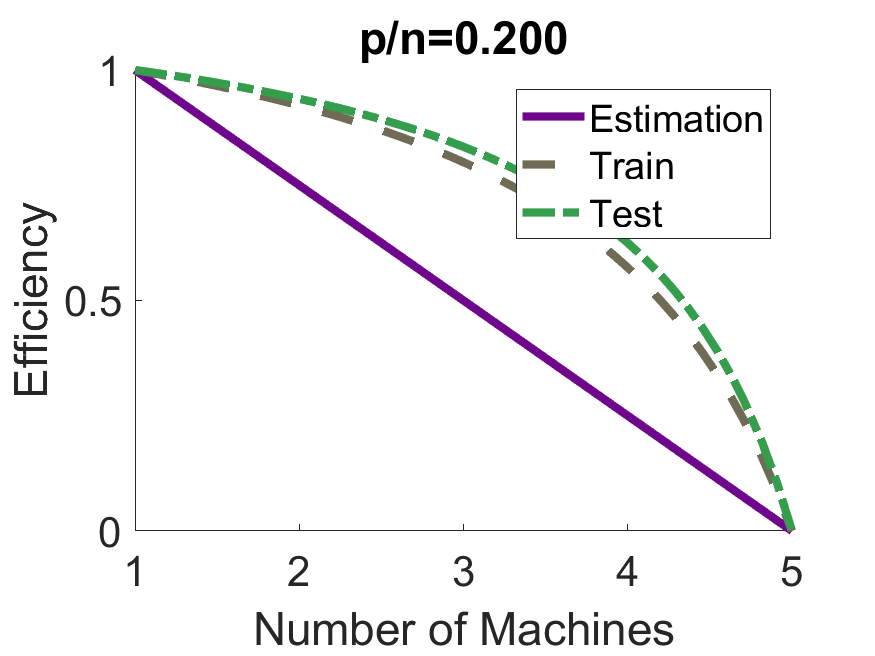}
\end{subfigure}
\caption{Relative efficiency for Marchenko-Pastur model.}
\label{aeoeie}
\end{figure}

\subsection{Confidence intervals}

To form confidence intervals, we consider the normal model $Y \sim \N(X\beta, \sigma^2 I_n)$. Recall that in this model the OLS estimator has distribution $\hbeta \sim \N(\beta, \sigma^2 (X^\top X)^{-1}).$ Assuming $\sigma^2$ is known, an exact level $1-\alpha$ confidence interval for a given coordinate $\beta_j$ can be formed as 
$$\hbeta_j \pm \sigma z_{\alpha/2} V_j^{1/2},$$
where $z_\alpha = \Phi^{-1}(\alpha)$ is the inverse normal CDF, and $V_j$ is the $j$-th diagonal entry of $(X^\top X)^{-1}$. We follow the same program as before, comparing the length of the confidence intervals formed based on our two estimators. However, for technical reasons it is more convenient to work with squared length. 

Thus we consider the criterion
$$CE(j;X_1,\ldots,X_k) 
:= \frac{V_j}{V_{j,dist}}.
$$
Here $V_{j,dist}$ is the variance of the $j$-th entry of an optimally weighted distributed estimator. As we saw in our framework, this is equivalent to estimating the $j$-th coordinate of $\beta$. Hence the optimal confidence interval efficiency is
\begin{align}
CE(j;X_1,\ldots,X_k) 
=[(X^\top X)^{-1}]_{jj}
\cdot 
\sum_{i=1}^k \frac{1}{[(X_i^\top X_i)^{-1}]_{jj}}
.
\label{ce_fs}
\end{align}

For asymptotics, we find: 
\begin{theorem}[CE for elliptical and MP models]
\label{ce_ell}
Under the conditions of Theorems \ref{gen_det} and \ref{re_ell}, the CE has the same limit as the ARE from Theorem \ref{re_ell}. Therefore, for Marchenko-Pastur models, the CE also has the form before, $CE(j)= (1/\gamma-k)/(1/\gamma-1).$
\end{theorem}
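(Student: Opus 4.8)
The plan is to show that the diagonal-entry functional $[(X^\top X)^{-1}]_{jj}$ obeys exactly the same deterministic equivalent as the trace functional $\tr[(X^\top X)^{-1}]$ used for $RE$, up to a $\Sigma$-dependent scalar that cancels in the efficiency ratio. Once this is established, $CE$ and $RE$ share an almost sure limit, and Theorem \ref{re_ell} finishes the argument with no further random-matrix work.

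First I would write the diagonal entry as a trace against a rank-one probe: $[(X^\top X)^{-1}]_{jj} = \tr[E_j E_j^\top (X^\top X)^{-1}] = n^{-1}\tr[E_j E_j^\top \hSigma^{-1}]$, where $\hSigma = n^{-1} X^\top X$. The matrix $C_n = E_j E_j^\top$ satisfies $\|C_n\|_{tr} = 1$ for every $p$, so it is a standard sequence in the sense of Section \ref{calc_det_equiv_intro}. Applying Theorem \ref{gen_det}, which gives $\hSigma^{-1} \asymp e_p\, \Sigma^{-1}$, together with the definition of $\asymp$, yields $[(X^\top X)^{-1}]_{jj} - n^{-1} e_p [\Sigma^{-1}]_{jj} \to 0$ almost surely. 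The same reasoning machine by machine gives $[(X_i^\top X_i)^{-1}]_{jj} - n_i^{-1} e_{p,i} [\Sigma^{-1}]_{jj} \to 0$, where $e_{p,i}$ is the fixed-point solution associated with the $i$-th machine's aspect ratio $\gamma_i$ and scale law $G_i$. Since the eigenvalues of $\Sigma$ are bounded away from $0$ and $\infty$, the scalar $[\Sigma^{-1}]_{jj} = E_j^\top \Sigma^{-1} E_j$ is likewise bounded away from $0$ and $\infty$, so these approximations may safely be inverted and divided.

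Next I would substitute these equivalents into the finite-sample formula \eqref{ce_fs}. The factor $[\Sigma^{-1}]_{jj}$ appears once in the global term and once in each reciprocal local term, so it cancels identically, leaving
$$CE(j;X_1,\ldots,X_k) \to_{a.s.} n^{-1} e_p \sum_{i=1}^k \frac{n_i}{e_{p,i}}.$$
This is precisely the expression obtained for $RE$ in the proof of Theorem \ref{re_ell}, where the analogous cancellation removes $\tr[\Sigma^{-1}]$ instead: both the trace and the diagonal functional collapse to $e_p$ times a $\Sigma$-functional, and only the common $e_p$ survives. Invoking Theorem \ref{re_ell} then identifies the limit as $f(\gamma,G)\sum_{i=1}^k 1/f(\gamma_i, G_i)$, i.e. the ARE. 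For the Marchenko-Pastur specialization, Corollary \ref{mp_det} gives $e_p = 1/(1-\gamma_p)$, and the telescoping identity $\sum_i (1-\gamma_i)/\gamma_i = 1/\gamma - k$ reproduces $(1/\gamma-k)/(1/\gamma-1)$, exactly as for $RE$.

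The main obstacle is verifying that the rank-one probe $E_j E_j^\top$ is a genuinely admissible standard sequence and that the cancellation is legitimate rather than merely formal, which amounts to confirming that $[\Sigma^{-1}]_{jj}$ stays bounded away from zero uniformly in $p$ so that dividing by the local diagonal entries does not amplify vanishing quantities. Everything else is bookkeeping: the deterministic equivalent of Theorem \ref{gen_det} and the structural identity $CE \approx RE$ do all the real work, and the heavy random-matrix content has already been packaged into Theorems \ref{gen_det} and \ref{re_ell}.
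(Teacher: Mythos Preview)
Your proposal is correct and follows essentially the same route as the paper: probe the deterministic equivalent $\hSigma^{-1}\asymp e_p\Sigma^{-1}$ with the rank-one standard sequence $E_jE_j^\top$, do the same on each machine, and let the common scalar $[\Sigma^{-1}]_{jj}$ cancel in the ratio to recover the ARE expression. If anything, you are slightly more careful than the paper in flagging that $[\Sigma^{-1}]_{jj}$ must be bounded away from zero to justify the division.
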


See Section \ref{pf:ce_ell} for the proof.

\subsection{Understanding and comparing the efficiencies}
\label{underst_com}
We give two perspectives for understanding and comparing the efficiencies. The key qualitative insight is that estimation and CIs are much more affected than prediction. 

{\bf Criticality of $k$.} We ask: What is the largest number of machines such that the asymptotic efficiency is at least 1/2? Let us call this the \emph{critical} number of machines. It is easy to check that for estimation and CIs, $k_R=(\gamma+1)/(2\gamma)$. For training error, $k_{Tr} = (\gamma^2-\gamma+1)/\gamma$, while for test error, $k_{Te} = (\gamma^2+1)/(\gamma^2+\gamma)$.

We also have the following asymptotics as $\gamma\to 0$: 
$$k_R\asymp 1/(2\gamma),$$ 
while 
$$k_{Tr} \asymp k_{Te} \asymp 1/\gamma.$$

So the number of machines that can be used is nearly maximal (i.e., $n/p$) for training and test error, while it is about \emph{half that} for estimation error and CIs. This shows quantitatively that estimation and CIs are much more affected  by distributed averaging than prediction.

{\bf Edge efficiency.} The maximum number of machines that we can use is approximately $k^* = 1/\gamma-1$, for small $\gamma$. Let us define the \emph{edge efficiency} $e^*$ as the relative efficiency achieved at this edge case. For estimation and CIs, we have $e^*_R=\gamma/(1-\gamma)$. For training error, $e^*_{Tr} = (1-\gamma)/(2-3\gamma)$, and for test error, $e^*_{Te} = 1/[2(1-\gamma)]$.

We also have the following asymptotic values as $\gamma\to 0$: 
$$e^*_R\asymp \gamma,$$ 
while 
$$e^*_{Tr} \asymp \frac{1}{2}+\frac{\gamma}{4},\textnormal{ and }e^*_{Te} \asymp \frac{1}{2}+\frac{\gamma}{2}.$$
This shows that for $n\gg p$ the edge efficiency is vanishing for estimation and CIs, while it is approximately 1/2 for training and test error. Thus, even for the maximal number of machines, prediction error is not greatly increased.

\section{Insights for Parameter Estimation}
\label{estim}
There are additional insights for the special case of parameter estimation. First, it is of interest to understand the performance of one-step weighted averaging with suboptimal weights $w_i$. How much do we lose compared to the optimal performance if we do not use the right weights? In practice, it may seem reasonable to take a simple average of all estimators. We have performed that analysis in the supplement (Section \ref{subopt}), and we found that the loss can be viewed in terms of an inequality between the arithmetic and harmonic means.

There are several more remarkable properties. We have studied the monotonicity properties and interpretation of the relative efficiency, see the supplement (Section \ref{prop-int}). 
We have also given a multi-response regression characterization that heuristically gives an upper bound on the "\emph{degrees of freedom}" for distributed regression (Section \ref{dof}). 

For elliptical data, the graph of ARE is a curve below the straight line from before. The interpretation is that for elliptical distributions, there is a larger efficiency loss in one-step weighted averaging. Intuitively, the problem becomes "more non-orthogonal" due to the additional variability from sample to sample. 

It is natural to ask which elliptical distributions are difficult for distributed estimation. For what scale distributions $G$ does the distributed setting have a strong effect on the learning accuracy? Intuitively, if some of the scales are much larger than others, then they "dominate" the problem, and may effectively reduce the sample size. We show that this intuition is correct, and we find a sequence of scale distributions $G_\tau$ such that distributed estimation is "arbitrarily bad", so that the ARE decreases very rapidly, and approaches zero even for two machines (see Section \ref{worst_case} in the supplement). 

\section{Multi-shot methods} 
\label{multi}

While our focus has been on methods with one round of communication, in practice it is more common to use iterative methods with several rounds of communication. These usually improve statistical accuracy. A great deal of research has been done on multi-shot distributed algorithms. Due to limited space, here we will only list and analyze some of them.  Our least squares objective can be written as a sum of least squares objectives for each machine as
$$
f(\beta)=\frac{1}{k}\sum_{i=1}^kf_i(\beta)=\frac{1}{k}\sum_{i=1}^k\|X_i\beta-Y_i\|_2^2.
$$
Here each machine has access only to local data $(X_i, Y_i)$. 
With this formulation, there are a large number of standard optimization methods to minimize this objective: distributed gradient descent, alternating directions method of multipliers, and several others we discuss below. We will focus on parameter server architectures, where each machine communicates independently with a central server.

{\bf Distributed Gradient Descent.} A simple multi-round approach to distributed learning is synchronous distributed gradient descent (DGD), as discussed e.g., in \cite{chu2007map}. This maintains iterates $\hbeta^{t}$, started with some standard value, such as $\hbeta^{0}=0$. At each iteration $t$ each local machine calculates the gradient $\nabla f_i(\hbeta^{t})$ at the current iterate $\hbeta^{t}$, and then sends the local gradient to the server to obtain the overall gradient 
$$\nabla f(\hbeta^{t}) = \frac{1}{k}\sum_{i=1}^k\nabla f_i(\hbeta^{t}).$$ 
Then the center server sends the updated parameter $\hbeta^{t+1}=\hbeta^{t}-\alpha\nabla f(\hbeta^{t})$ back to the local machines, where $\alpha$ is the learning rate (LR). This synchronous implementation is \emph{identical} to centralized gradient descent. Thus for smooth and strongly convex objectives and suitably small $\alpha$, $\mathcal{O}(L/\lambda\log(1/\ep))$  communication rounds are sufficient to attain an $\ep$-suboptimal solution in terms of objective value, where $L,\lambda$ are the smoothness and strong convexity parameters \citep[e.g.,][]{boyd2004convex}. 

\begin{enumerate}
\item Many works study the optimization properties of GD/synchronous DGD, in terms of convergence rate to the optimal objective or parameter value. From a statistical point of view, the GD iterates start with large bias and small variance, and gradually reduce bias, while slightly increasing the variance. This has motivated work on the risk properties of GD, emphasizing early stopping \citep[][e.g.,]{yao2007early,alnur}. Recently, \cite{alnur} gave a more refined analysis of the estimation risk of GD for OLS, showing that its risk at an optimal stopping time is at most 1.22 times the risk of optimally tuned ridge regression. 

\item Compared to GD, one-shot weighted averaging has several advantages: it is simpler to implement, as it requires no iterations. It requires fewer tuning parameters, and those can be set optimally in an easy way, unlike the LR $\alpha$. The weights are proportional to $1/\tr[(X_i^TX_i)^{-1}]$, which can be computed locally. We point out that GD is sensitive to the learning rate: this has to be bounded (by $2/\lambda_{\max}(X^\top X)$ for OLS) to converge, and the convergence can be faster for large LR, hence in practice sophisticated LR schedules are used. This can make DGD complicated to use. In addition, in practice DGD is susceptible to stragglers, i.e., machines that take too long to compute their answers. To mitigate this problems, asynchronous DGD algorithms \cite[e.g.,][]{tsitsiklis1986distributed,nedic2009distributed}, and other sophisticated coding ideas \citep{tandon2017gradient} have been proposed. However those lead to additional complexity and hyperparameters to tune (e.g., for async algorithms: how much to wait, how to aggregate non-straggler gradients). 

\item One may also use other gradient based methods, such as accelerated or quasi-Newton methods, e.g., L-BFGS \citep{agarwal2014reliable}.

\end{enumerate}

{\bf Alternating Direction Method of Multipliers (ADMM).} Another approach is the alternating direction method of multipliers \cite[see][for an exposition]{boyd2011distributed} and its variants. In ADMM, we alternate between solving local problems, global averaging, and computing local dual variables. For us, at time step $t$ of ADMM, each local machine calculates a local estimator 
$$
\hbeta_i^{t+1}=(X_i^\top X_i+\rho I)^{-1}[X_i^\top Y_i+\rho(\hbeta^{t}-u_i^{t})],
$$
(where $\rho$ is a hyperparameter) and sends it to the parameter server to get an average
$$
\hbeta^{t+1}=\frac{1}{k}\sum_{i=1}^k\hbeta_i^{t+1}.
$$
Finally, the server sends $\hbeta^{t+1}$ back to the local machines to update the dual variables
$$
u_i^{t+1}=u_i^{t}+\hbeta_i^{t+1}-\hbeta^{t+1}.
$$
These three steps can be written as a linear recursion $z^{t+1}=Az^{t}+b$ for a state variable $z^{t}$ including $\hbeta^{t}, \hbeta_i^{t}$ and $u_i^{t}$. If all singular values of $A$ are less than one, then the iteration converges to a fixed point solving $z=Az+b$, so that $z=(I-A)^{-1}b$. However, it seems hard to prove convergence in our asymptotic setting.

{\bf Distributed Approximate Newton-type Method (DANE).} \cite{DANE14} proposed an approximate Newton-like method (DANE), which uses that the sub-problems are similar. For our problem, DANE aggregates the local gradients on the parameter server at each step $t$, and sends this quantity, i.e., $X^\top (X\hbeta^t-Y)/(2k)$ to all machines. Then each machine computes a local estimator by a gradient step in the direction of a regularized local Hessian $X_i^\top X_i+\rho I$,
$$
\hbeta_i^{t+1}=\hbeta^{t}+\frac{\eta}{k}\cdot(X_i^\top X_i+\rho I)^{-1} X^\top(Y-X\hbeta^{t}),
$$
where $\rho$ is the regularizer and $\eta$ is the learning rate. The machines send it to the server to get the aggregated estimator
$$
\hbeta^{t+1}=\frac{1}{k}\sum_{i=1}^k\hbeta_i^{t+1}.
$$
For a noiseless model where $Y = X\beta$, we can summarize the update rule as 
$$\hbeta^{t+1}-\beta=\left(I-\frac{\eta}{k^2}\cdot\sum_{i=1}^k\left(X_i^\top X_i+\rho I\right)^{-1}X^\top X\right)\left(\hbeta^t-\beta\right),$$
so we have the error bound
$$\|\hbeta^t-\beta\|_2\leq\left\|I-\frac{\eta}{k^2}\cdot\sum_{i=1}^k\left(X_i^\top X_i+\rho I\right)^{-1}X^\top X\right\|_2^t\cdot\|\hbeta^0-\beta\|_2.$$
In \cite{DANE14}, the authors showed that given a suitable learning rate $\eta$ and regularizer $\rho$, when $X_i^\top X_i$ is close to $X^\top X/k$, $\hbeta^t\to\beta$ as $t\to\infty$. 

For a noisy linear model $Y=X\beta+\ep$, the limit of $\hbeta^t$ is exactly the OLS estimator of the whole data set, and we have the following recursion formula
$$\hbeta^{t+1}-(X^\top X)^{-1}X^\top Y=\left(I-\frac{\eta}{k^2}\cdot\sum_{i=1}^k\left(X_i^\top X_i+\rho I\right)^{-1}X^\top X\right)\left(\hbeta^t-(X^\top X)^{-1}X^\top Y\right),$$
and the convergence guarantee is the same as for the noiseless case.

{\bf Iterative Averaging Method.} Here we describe an iterative averaging method for distributed linear regression. This method turns out to be connected to DANE, and it has the advantage that it can be analyzed more conveniently. We define a sequence of \emph{local estimates} $\hbeta^{t}_i$ and \emph{global estimates} $ \hbeta^{t}$ with initialization $\hbeta^0 = 0$. At the $t$-th step we update the local estimate by the following weighted average of the local ridge regression estimator and the current global estimate $\hbeta^t$
$$\hbeta^{t+1}_i = 
\left (X_i^\top X_i+n_i\rho_i I\right)^{-1}
 \left (X_i^\top Y_i+n_i\rho_i\hbeta^t\right).$$
Then we average the local estimates
$$ \hbeta^{t+1} = \frac{1}{k} \sum_{i} \hbeta^{t+1}_i.$$
To understand this, let us first consider a noiseless model where $Y_i = X_i \beta$. In that case, we can also write this update as a weighted average,
$$\hbeta^{t+1}_i= (I-W_{i}) \beta+ W_i  \hbeta^t,$$
where 
$$W_i = n_i\rho_i \cdot
\left (X_i^\top X_i+n_i\rho_i I\right)^{-1}$$
is the weight matrix of the global estimate. Propagating the iterative update to the global machine, we find a linear update rule: 
$$ \hbeta^{t+1}
= \frac{1}{k}\sum_i W_{i}   \hbeta^t+ \left (I- \frac{1}{k}\sum_i W_i \right )\beta
=  W  \hbeta^t + (I -  W) \beta, 
$$
where $ W = \frac{1}{k}\sum_i W_{i}$. Hence, the error is updated as
$$\hbeta^{t+1}-\beta=W\cdot [\hbeta^t-\beta]=\left(I-\frac{1}{k}\sum_{i=1}^k\left(X_i^\top X_i+n_i\rho_iI\right)^{-1}X_i^\top X_i\right)\left(\hbeta^t-\beta\right).
$$
This recursion relation is very similar to the one for DANE; we just need to replace $X^\top X/k$ by $X_i^\top X_i$ (and in practice usually $\eta=1$ is used). The only difference is that DANE has a step where we need to collect the local gradients to get the global gradient, and then broadcast it to all local machines. Our iterative averaging method has lower communication cost. 

In terms of convergence, $\hbeta^{t+1}$ will converge geometrically to $\beta$ for all $\beta$, if and only if the largest eigenvalue of $W$ is strictly less than $1$. It is not hard to see that this holds if at least one $X_i^\top X_i$ has positive eigenvalues by using the fact $\lambda_{\max}(A+B)\leq\lambda_{\max}(A)+\lambda_{\max}(B)$. When the samples are uniformly distributed, we should have $X^\top X/k\approx X_i^\top X_i$, which means the convergence rates of DANE and iterative averaging should be very close. Hence, in terms of the total cost (communication and computation), our iterative averaging should compare favorably to DANE.

To summarize the noiseless case, we can formulate the following result:

\begin{theorem}[Convergence of iterative averaging, noiseless case]
Consider the iterative averaging method described above. In the noiseless case when $Y_i = X_i\beta$, we have the following:
If at least one $X_i^\top X_i$ has positive eigenvalues, then the iterates converge to the true coefficients geometrically, $\hbeta^t\to\beta$, and:
$$\|\hbeta^t-\beta\|_2
\le 
\lambda_{\max}\left(\frac{1}{k}\sum_{i=1}^k n_i\rho_i\cdot
\left (X_i^\top X_i+n_i\rho_i I\right)^{-1}\right)^t
\cdot
\|\beta\|_2
.$$
\label{noiseless}
\end{theorem}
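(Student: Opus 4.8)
The recursion has already been reduced, in the noiseless case, to the linear error update $\hbeta^{t+1}-\beta = W(\hbeta^t-\beta)$ with $W = \frac{1}{k}\sum_{i=1}^k W_i$ and $W_i = n_i\rho_i(X_i^\top X_i+n_i\rho_i I)^{-1}$. So the plan is to iterate this recursion from the initialization $\hbeta^0=0$, obtaining $\hbeta^t-\beta = W^t(\hbeta^0-\beta) = -W^t\beta$, and then to control $\|W^t\beta\|_2$ through the spectrum of $W$. The bound in the statement is phrased in terms of $\lambda_{\max}(W)^t$ rather than $\|W^t\|_{op}$ or the spectral radius, so the first substantive task is to show that these coincide, which comes down to verifying that $W$ is symmetric positive semidefinite with $W \preceq I$.

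To that end I would first record that each $W_i$ is symmetric positive definite with eigenvalues in $(0,1]$: since $X_i^\top X_i \succeq 0$ and $n_i\rho_i>0$, the matrix $X_i^\top X_i+n_i\rho_i I$ is positive definite, so the scaled inverse $n_i\rho_i(X_i^\top X_i+n_i\rho_i I)^{-1}$ is positive definite, and an eigenvalue $\mu\ge 0$ of $X_i^\top X_i$ maps to the eigenvalue $n_i\rho_i/(\mu+n_i\rho_i)\in(0,1]$ of $W_i$. In particular $0\prec W_i\preceq I$. Averaging preserves the semidefinite ordering, so $0\preceq W\preceq I$, and hence every eigenvalue of $W$ lies in $[0,1]$. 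Because $W$ is symmetric with nonnegative spectrum, $\|W^t\|_{op}=\lambda_{\max}(W)^t$. Combining this with $\|\hbeta^t-\beta\|_2=\|W^t\beta\|_2\le\|W^t\|_{op}\|\beta\|_2$ yields exactly the displayed inequality, since $\hbeta^0=0$ gives $\|\hbeta^0-\beta\|_2=\|\beta\|_2$ and $W$ is precisely the averaged matrix appearing inside $\lambda_{\max}$ there.

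It remains to upgrade this estimate to \emph{geometric} convergence by proving the strict inequality $\lambda_{\max}(W)<1$ under the hypothesis that some $X_{i_0}^\top X_{i_0}$ is positive definite, for which I would use the cited subadditivity $\lambda_{\max}(A+B)\le\lambda_{\max}(A)+\lambda_{\max}(B)$. For the index $i_0$ we have $\lambda_{\min}(X_{i_0}^\top X_{i_0})>0$, so $\lambda_{\max}(W_{i_0})=n_{i_0}\rho_{i_0}/(\lambda_{\min}(X_{i_0}^\top X_{i_0})+n_{i_0}\rho_{i_0})<1$, while $\lambda_{\max}(W_j)\le 1$ for every $j$. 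Subadditivity then gives $\lambda_{\max}(\sum_i W_i)\le\sum_i\lambda_{\max}(W_i)<k$, i.e.\ $\lambda_{\max}(W)<1$, so the factor $\lambda_{\max}(W)^t$ decays geometrically and $\hbeta^t\to\beta$.

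The argument is essentially self-contained once the error recursion is in hand, and most of the steps are routine. The one genuinely delicate point is the spectral bookkeeping flagged above: the statement's use of $\lambda_{\max}(W)$, rather than $\|W\|_{op}$ or the spectral radius, is justified \emph{only} because $W$ inherits positive semidefiniteness and the uniform bound $W\preceq I$ from the $W_i$; without positivity the correct geometric rate would be the operator norm, and the two need not agree. I therefore expect the main thing to get right to be this semidefinite structure, together with the conversion of the qualitative hypothesis ``positive eigenvalues'' into the quantitative strict gap $\lambda_{\max}(W_{i_0})<1$.
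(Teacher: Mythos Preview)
Your proposal is correct and follows essentially the same route as the paper. The paper derives the error recursion $\hbeta^{t+1}-\beta = W(\hbeta^t-\beta)$ in the text preceding the theorem and then argues, just as you do, that $\lambda_{\max}(W)<1$ follows from $\lambda_{\max}(A+B)\le\lambda_{\max}(A)+\lambda_{\max}(B)$ once some $X_{i_0}^\top X_{i_0}$ is positive definite; your write-up simply makes explicit the positive semidefiniteness of $W$ that justifies identifying $\|W^t\|_{op}$ with $\lambda_{\max}(W)^t$, a point the paper leaves implicit.
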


Consider now the noisy case when $Y_i = X_i\beta+\ep_i$ with the same assumptions as in the rest of the paper. We have
\begin{align*}
\hbeta^{t+1}_i 
&= W_i  \hbeta^t + (I-W_{i})\beta 
+ \left (X_i^\top X_i+n_i\rho_i I\right)^{-1}X_i^\top \ep_i\\
&=_d W_i  \hbeta^t + (I-W_{i})\beta 
+ \left (X_i^\top X_i+n_i\rho_i I\right)^{-1}X_i^\top X_i \cdot Z_i\\
&= W_i  \hbeta^t + (I-W_{i})(\beta + Z_i),
\end{align*}
where $Z_i \sim \N(0,\sigma^2 [X_i^\top X_i]^{-1})$. As before, defining $Z$ appropriately
\begin{align*}
\hbeta^{t+1}
 & =  W  \hbeta^t + (I -  W) \beta
+ \frac{1}{k}\sum_{i=1}^k(I-W_{i})Z_i\\ 
 & =  W  \hbeta^t + (I -  W) \beta
+ Z,
\end{align*}
so $ \hbeta^{t+1} - \beta =  W\cdot [  \hbeta^t -  \beta] +Z$.

With noise, $\hbeta^t$ does not converge to OLS, but to the following quantity:
$$\hbeta_{*}=\left(\sum_{i=1}^k\left(X_i^\top X_i+n_i\rho_iI\right)^{-1}X_i^\top X_i\right)^{-1}\cdot\sum_{i=1}^k\left(X_i^\top X_i+n_i\rho_iI\right)^{-1}X_i^\top Y_i.$$
We can check that $\hbeta_{*}$ is an unbiased estimator for $\beta$ and
$\hbeta^{t+1}-\hbeta_{*}=W\cdot [\hbeta^t-\hbeta_{*}].
$

Under the conditions of Theorem \ref{noiseless}, we have $\hbeta^t\to\hbeta_{*}$, and the MSE for $\hbeta_{*}$ is
\begin{align*}
\E\|\hbeta_{*}-\beta\|^2
&= \E\|(I-W)^{-1}Z\|^2 = \E\|\frac{1}{k}\sum_{i=1}^k(I-W)^{-1}(I-W_{i})Z_i\|^2\\
&=\frac{\sigma^2}{k^2}\sum_{i=1}^k
\tr[(I-W)^{-2}(X_i^\top X_i+n_i\rho_i I)^{-2}X_i^\top X_i]\\
&=\sigma^2\sum_{i=1}^k
\tr\left[\left(\sum_{i=1}^k\left(X_i^\top X_i+n_i\rho_iI\right)^{-1}X_i^\top X_i\right)^{-2}\left(X_i^\top X_i+n_i\rho_i I\right)^{-2}X_i^\top X_i\right].
\end{align*}

How large is this MSE, and how does it depend on $\rho_i$?
We have the following results.

\begin{theorem}[Properties of Iterative averaging, noisy case]
Consider the iterative averaging method described above. In the noisy case when $Y_i = X_i\beta+\ep$, we have the following:
\begin{enumerate}
\item If at least one $X_i^\top X_i$ has strictly positive eigenvalues, then the iterates converge to the following limiting unbiased estimator
$$\hbeta_{*}=\left(\sum_{i=1}^k\left(X_i^\top X_i+n_i\rho_iI\right)^{-1}X_i^\top X_i\right)^{-1}\cdot\sum_{i=1}^k\left(X_i^\top X_i+n_i\rho_iI\right)^{-1}X_i^\top Y_i,$$
and the convergence is geometric
$$\|\hbeta^t-\hbeta_{*}\|_2
\le 
\lambda_{\max}\left(\frac{1}{k}\sum_{i=1}^k n_i\rho_i\cdot
\left (X_i^\top X_i+n_i\rho_i I\right)^{-1}\right)^t
\cdot
\|\hbeta_{*}\|_2
.$$
\item The mean squared error of $\hbeta_{*}$ has the following form
$$\E\|\hbeta_{*}-\beta\|^2=\sigma^2\sum_{i=1}^k
\tr\left[\left(\sum_{i=1}^k\left(X_i^\top X_i+n_i\rho_iI\right)^{-1}X_i^\top X_i\right)^{-2}\left(X_i^\top X_i+n_i\rho_i I\right)^{-2}X_i^\top X_i\right].$$
\item Suppose the samples are evenly distributed, i.e., $n_1=n_2=\cdots=n_k=n/k$ and the regularizers are all the same $\rho_1=\rho_2=\cdots=\rho_k=\rho$. The MSE is a differentiable function $\psi(\rho)$ of the regularizer $\rho\in[0, +\infty)$, with derivative
\begin{align*}
\psi'(\rho)=\frac{2k}{n}\tr[&\Delta^{-1}\sum_{i=1}^k\left(\hSigma_i+\rho I\right)^{-2}\hSigma_i\cdot\Delta^{-2}\sum_{i=1}^k\left(\hSigma_i+\rho I\right)^{-2}\hSigma_i\\
&-\Delta^{-2}\sum_{i=1}^k\left(\hSigma_i+\rho I\right)^{-3}\hSigma_i],
\end{align*}
where $\hSigma_i=X_i^\top X_i/n_i$ and
$\Delta:=\sum_{i=1}^k\left(\hSigma_i+\rho I\right)^{-1}\hSigma_i.$
\item $\psi(\rho)$ is a non-increasing function on $[0, +\infty)$ and $\psi'(0)<0$. So for any $\rho>0, \psi(\rho)<\psi(0)$, i.e. the MSE of the iterative averaging estimator with positive regularizer is smaller than the MSE of the one-step averaging estimator.
\item When $\rho=0$, $\hbeta_{*}$ reduces to the one-step averaging estimator $1/k\cdot\sum_{i=1}^k(X_i^\top X_i)^{-1}X_i^\top Y_i$ with MSE
$$\psi(0)=\sigma^2/k^2\cdot\sum_{i=1}^k\tr(X_i^\top X_i)^{-1}.$$
When $\rho\to+\infty$, $\hbeta_{*}$ converges to the OLS estimator $(X^\top X)^{-1}X^\top Y$ with MSE
$$\lim_{\rho\to+\infty}\psi(\rho)=\sigma^2\tr(X^\top X)^{-1}.$$
\end{enumerate}
\label{noisy}
\end{theorem}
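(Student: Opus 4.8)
The plan is to split the five claims into three groups and lean throughout on the pre-theorem recursion $\hbeta^{t+1}-\beta = W(\hbeta^t-\beta)+Z$, where $W=\frac1k\sum_i W_i$, $W_i = n_i\rho_i(X_i^\top X_i+n_i\rho_i I)^{-1}$, and $Z=\frac1k\sum_i(I-W_i)Z_i$ with $Z_i\sim\N(0,\sigma^2(X_i^\top X_i)^{-1})$ independent across $i$.

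\textbf{Parts 1 and 2 (convergence and MSE).} First I would identify the fixed point by solving $(I-W)(\hbeta_*-\beta)=Z$, giving $\hbeta_*=\beta+(I-W)^{-1}Z$; since $I-W=\frac1k\sum_i(X_i^\top X_i+n_i\rho_i I)^{-1}X_i^\top X_i$, multiplying through reproduces the stated closed form and shows $\E\hbeta_*=\beta$. Subtracting the fixed-point relation from the recursion gives $\hbeta^{t+1}-\hbeta_*=W(\hbeta^t-\hbeta_*)$, so with $\hbeta^0=0$ we get $\|\hbeta^t-\hbeta_*\|\le\|W\|_{op}^t\|\hbeta_*\|$; the geometric rate follows from $\lambda_{\max}(W)<1$, since each $W_i\preceq I$ with $v^\top W_i v<1$ for $v$ outside $\ker(X_i^\top X_i)$, so if one $X_i^\top X_i\succ0$ then $v^\top Wv<1$ for every unit $v$. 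For the MSE, independence of the $Z_i$ gives $\E\|\hbeta_*-\beta\|^2=\frac{\sigma^2}{k^2}\sum_i\tr[(I-W)^{-2}(I-W_i)^2(X_i^\top X_i)^{-1}]$, and using $(I-W_i)^2(X_i^\top X_i)^{-1}=X_i^\top X_i(X_i^\top X_i+n_i\rho_i I)^{-2}$ together with $(I-W)^{-1}=k(\sum_i(X_i^\top X_i+n_i\rho_i I)^{-1}X_i^\top X_i)^{-1}$ cancels the $k^{-2}$ and yields the claimed trace formula.

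\textbf{Parts 3 and 5 (derivative and endpoints).} Under $n_i=n/k$, $\rho_i=\rho$, the substitution $X_i^\top X_i+n_i\rho_i I=(n/k)(\hSigma_i+\rho I)$ reduces each block to a function of $\hSigma_i$ and gives $\psi(\rho)=\frac{\sigma^2k}{n}\tr[\Delta^{-2}B]$ with $\Delta=\sum_i(\hSigma_i+\rho I)^{-1}\hSigma_i$ and $B=\sum_i(\hSigma_i+\rho I)^{-2}\hSigma_i$. Differentiation uses $\frac{d}{d\rho}(\hSigma_i+\rho I)^{-1}=-(\hSigma_i+\rho I)^{-2}$, so $\Delta'=-B$ and $B'=-2C$ with $C=\sum_i(\hSigma_i+\rho I)^{-3}\hSigma_i$; the product rule on $\Delta^{-2}B$, plus cyclicity (which makes the two cross terms $\Delta^{-1}B\Delta^{-2}B$ and $\Delta^{-2}B\Delta^{-1}B$ have equal trace), yields $\psi'(\rho)=\frac{2\sigma^2k}{n}\tr[\Delta^{-1}B\Delta^{-2}B-\Delta^{-2}C]$, matching the stated formula up to the positive prefactor $\sigma^2$ that is irrelevant to the sign analysis. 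For the endpoints I would evaluate directly: at $\rho=0$, $(X_i^\top X_i)^{-1}X_i^\top X_i=I$ forces $\hbeta_*=\frac1k\sum_i\hbeta_i$ with MSE $\frac{\sigma^2}{k^2}\sum_i\tr(X_i^\top X_i)^{-1}$, while as $\rho\to\infty$ the leading term $(X_i^\top X_i+n_i\rho I)^{-1}\approx(n_i\rho)^{-1}I$ makes both sums proportional to $X^\top X$ and $X^\top Y$, so $\hbeta_*\to(X^\top X)^{-1}X^\top Y$ with MSE $\sigma^2\tr(X^\top X)^{-1}$.

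\textbf{Part 4 (monotonicity), the main obstacle.} The crux is the sign of $\psi'$. Conjugating by $\Delta^{\pm1/2}$ and using cyclicity, $\psi'(\rho)\le0$ is equivalent to $\tr[\Delta^{-1}(H-G^2)]\ge0$ where $G=\Delta^{-1/2}B\Delta^{-1/2}$ and $H=\Delta^{-1/2}C\Delta^{-1/2}$; since $\Delta^{-1}\succ0$ it suffices to prove $H\succeq G^2$, equivalently $C\succeq B\Delta^{-1}B$. Writing $A_i=(\hSigma_i+\rho I)^{-1}\hSigma_i$, $B_i=(\hSigma_i+\rho I)^{-2}\hSigma_i$, $C_i=(\hSigma_i+\rho I)^{-3}\hSigma_i$, all commuting functions of $\hSigma_i$, one checks $C_i=B_iA_i^{-1}B_i$, so $\sum_iC_i\succeq(\sum_iB_i)(\sum_iA_i)^{-1}(\sum_iB_i)$ is the matrix Cauchy--Schwarz inequality; I would prove it via the Schur-complement observation that each $\begin{pmatrix}A_i & B_i\\ B_i & C_i\end{pmatrix}\succeq0$ (its Schur complement $C_i-B_iA_i^{-1}B_i=0$), so the sum is PSD and its Schur complement gives $C-B\Delta^{-1}B\succeq0$. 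For the strict claim $\psi'(0)<0$, at $\rho=0$ we have $A_i=I$, $\Delta=kI$, $B=\sum_i\hSigma_i^{-1}$, $C=\sum_i\hSigma_i^{-2}$, reducing it to $\tr[(\sum_i\hSigma_i^{-1})^2]<k\sum_i\tr[\hSigma_i^{-2}]$, which is the trace of $(\sum_iG_i)^2\preceq k\sum_iG_i^2$ (i.e.\ $\sum_{i<j}(G_i-G_j)^2\succeq0$) with $G_i=\hSigma_i^{-1}$, strict unless all $\hSigma_i$ coincide; combining $\psi'\le0$ with $\psi'(0)<0$ gives $\psi(\rho)<\psi(0)$ for every $\rho>0$. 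The genuinely nontrivial step is establishing $C\succeq B\Delta^{-1}B$ in the non-commuting regime (the $\hSigma_i$ need not commute across machines), which is exactly where the Schur-complement/matrix Cauchy--Schwarz structure is essential; the remainder is bookkeeping and routine matrix differentiation.
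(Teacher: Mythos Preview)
Your proposal is correct and follows essentially the same route as the paper. Parts 1, 2, 3 and 5 match the paper's derivations (the paper computes $\psi'$ via a first-order perturbation expansion rather than direct matrix differentiation, but this is the same calculation), and for Part 4 the paper proves exactly $C\succeq B\Delta^{-1}B$ by the same Schur-complement argument: each block $\begin{pmatrix}C_i & B_i\\ B_i & A_i\end{pmatrix}$ is PSD because its Schur complement vanishes, so the sum is PSD and its Schur complement gives the desired inequality; your intermediate conjugation by $\Delta^{-1/2}$ is an extra but harmless reformulation. The strict inequality $\psi'(0)<0$ is handled identically via the trace Cauchy--Schwarz / sum-of-squares identity you state.
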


See Section \ref{pf:noisy} of the supplement for the proof. The argument for monotonicity relies on Schur complements, and is quite nontrivial. From Theorem \ref{noisy}, it appears we should choose the regularizer $\rho$ as large as possible, since the limiting estimator $\hbeta_{*}$ will converge to the OLS estimator as $\rho\to\infty$. This is true for statistical accuracy. However, there is a computational tradeoff, since the convergence rate of $\hbeta^t$ to $\hbeta_{*}$ is slower for large $\rho$.

Moreover, one may argue that $\hbeta_{*}$ reduces to the naive averaging estimator but not the optimally weighted averaging estimator when $\rho=0$. However, we have shown in the supplement (Section \ref{subopt}) that for evenly distributed samples, the MSE of the naive averaging estimator and the optimally weighted averaging estimator is asymptotically the same. Thus, there exists a regularizer such that the iterative averaging estimator has smaller MSE than the one-step weighted averaging estimator.

{\bf Other approaches.} There are many other approaches to distributed learning.  \emph{Dual averaging for decentralized optimization} over a network \citep{duchi2011dual}  builds on Nesterov's dual averaging method \citep{nesterov2009primal}. It chooses the iterates to minimize an averaged first-order approximation to the function, regularized with a proximal function. The \emph{communication-efficient surrogate likelihood} approximates the objective by an expression of the form $\tilde f(\beta) = f_1(\beta) - \beta^\top(\nabla f_1(\bar\beta)-\nabla f(\bar\beta))$, where $\bar \beta$ is a preliminary estimator \citep{jordan2016communication,wang2017efficient}.  \cite{chen2018quantile} propose a related method for quantile regression. Both are related to DANE \citep{DANE14}. 

\cite{chen2018first} study divide and conquer SGD (DC-SGD), running SGD on each machine and averaging the results. They also propose a distributed first-order Newton-type estimator starting with a preliminary estimator $\bar\beta$, of the form $\bar\beta-\Sigma^{-1}(k^{-1}\sum_i \nabla f_i(\bar\beta))$, where $\Sigma$ is the population Hessian. They show how to numerically estimate this efficiently, and also develop a more accurate multi-round version.

\subsection{Numerical comparisons} We report simulations to compare the convergence rate and statistical accuracy of the one-shot weighted method with some popular multi-shot methods described above (Figure \ref{comparison}). Here we work with a linear model $Y=X\beta+\ep$, where $X, \beta$ and $\ep$ all follow standard normal distributions. We take $n = 10000, p = 100$, and $k = 20$. We plot the relative efficiencies of different methods against the number of iterations. 

We can see that the one-shot weighted method is good in some cases. The multi-shot methods usually need several iterations to achieve better statistical accuracy. When the communication cost is large, one-shot methods are attractive. Also, we can clearly see the computation vs accuracy tradeoff for the iterative averaging method from the plots. When the regularizer is small, the convergence is fast, but in the end the accuracy is not as good as the other multi-shot methods. On the other hand, if the regularizer is large, we have a better accuracy with slower convergence. Moreover, the widely-used multi-shot methods can require a lot of work for parameter tuning, and sometimes it is very difficult to find the optimal  parameters. See Figure \ref{stepsize} for an example. In contrast, weighted averaging requires less tuning, making it a more attractive method.

\begin{figure}
\begin{subfigure}{.45\textwidth}
  \centering
\includegraphics[scale=0.18]
{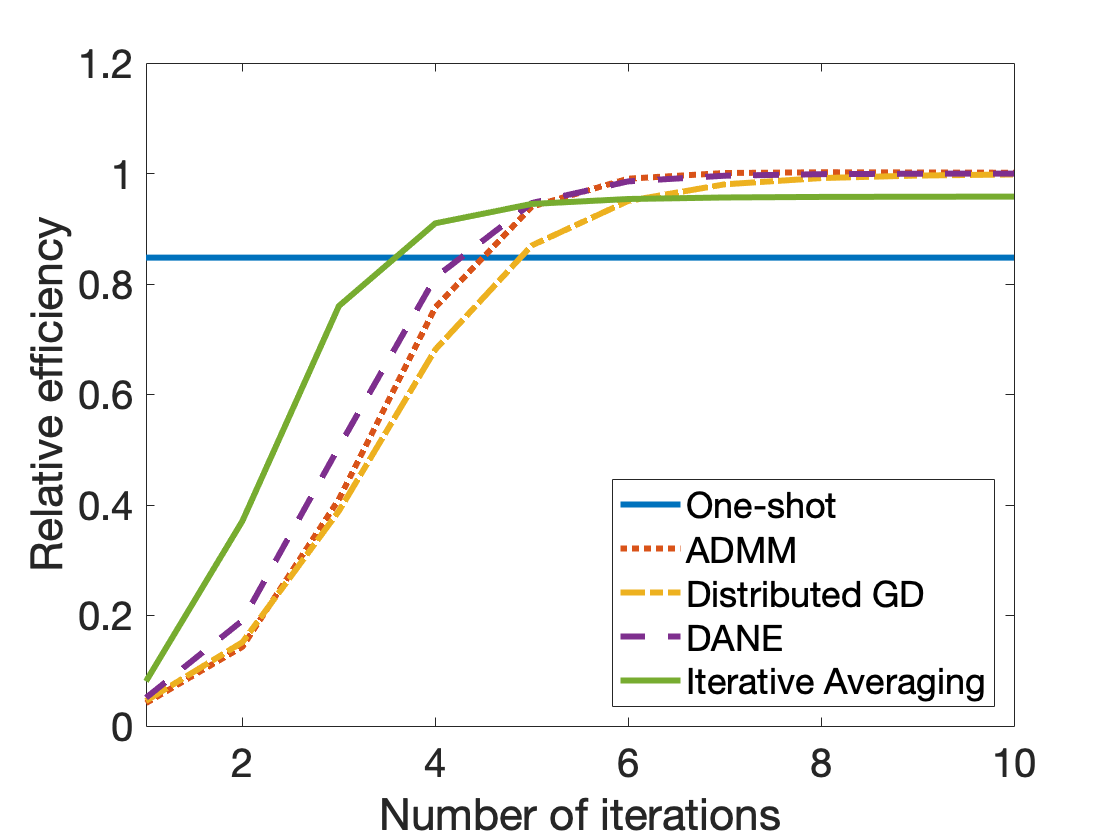}
\end{subfigure}
\begin{subfigure}{.45\textwidth}
  \centering
\includegraphics[scale=0.18]
{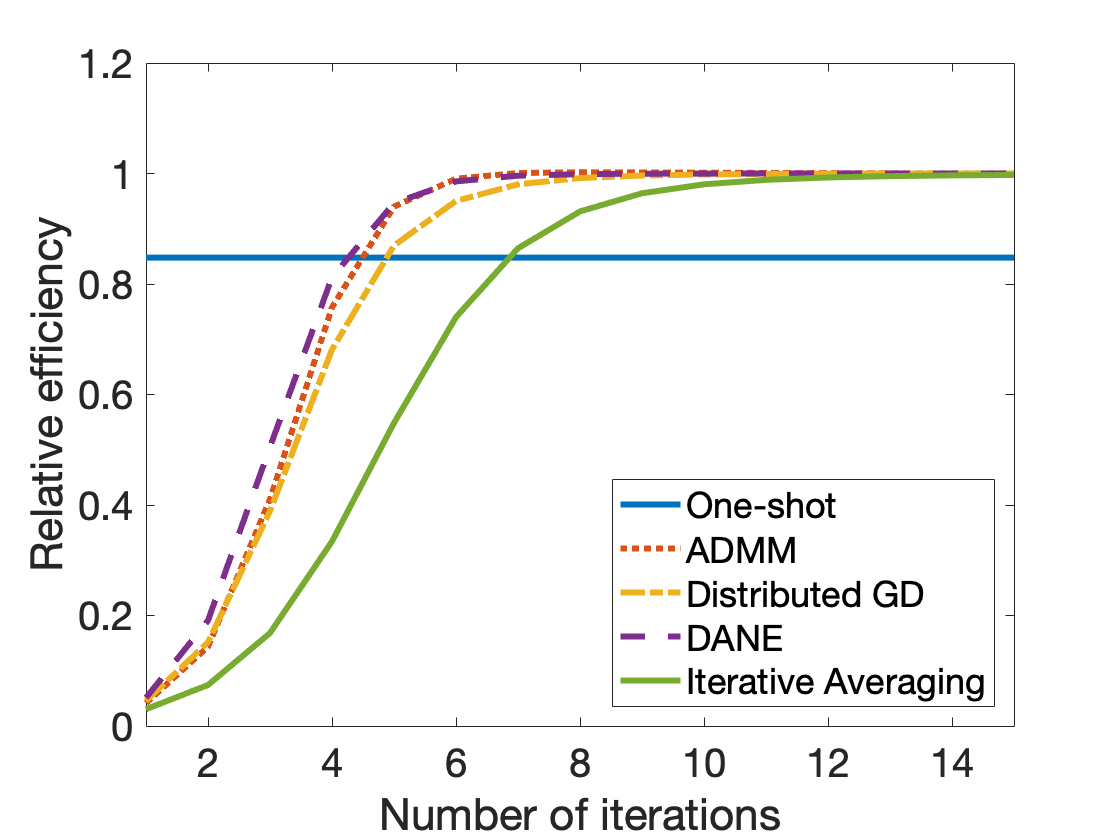}
\end{subfigure}
\caption{Comparison of the one-shot weighted method and several widely used multi-shot methods,}
\label{comparison}
\end{figure}

\begin{figure}
\begin{subfigure}{.45\textwidth}
  \centering
\includegraphics[scale=0.18]
{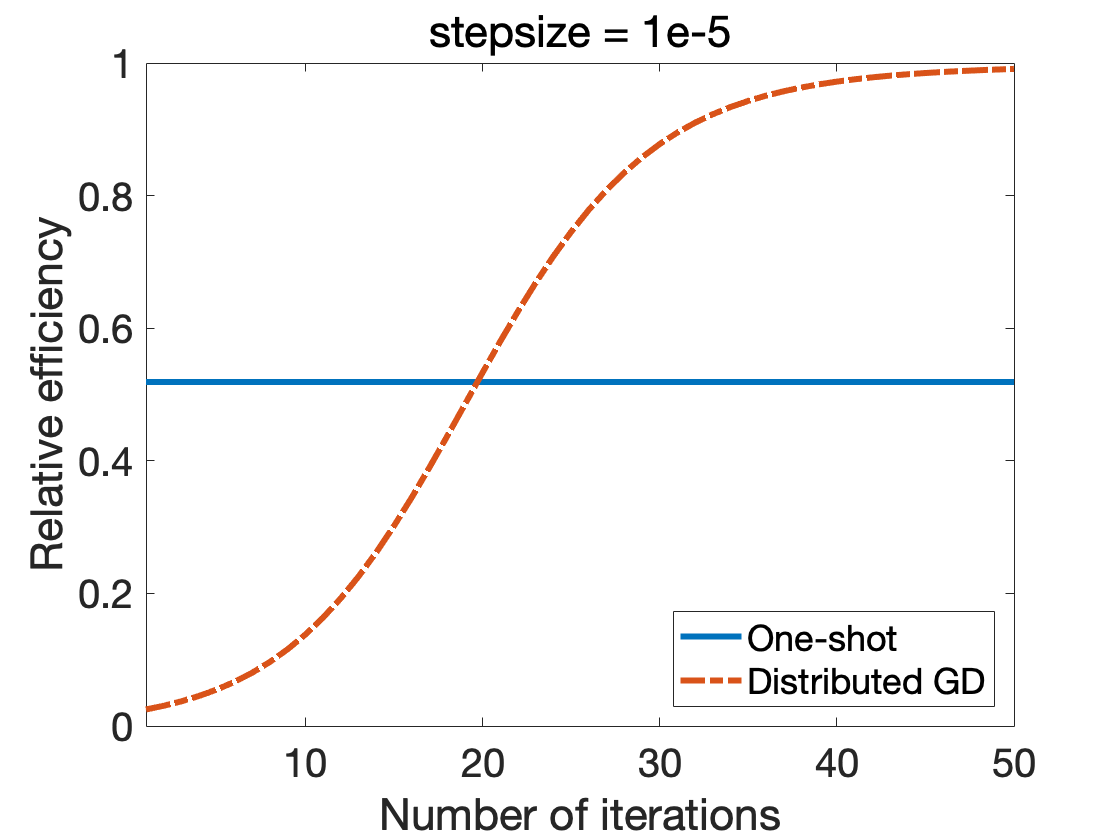}
\end{subfigure}
\begin{subfigure}{.45\textwidth}
  \centering
\includegraphics[scale=0.18]
{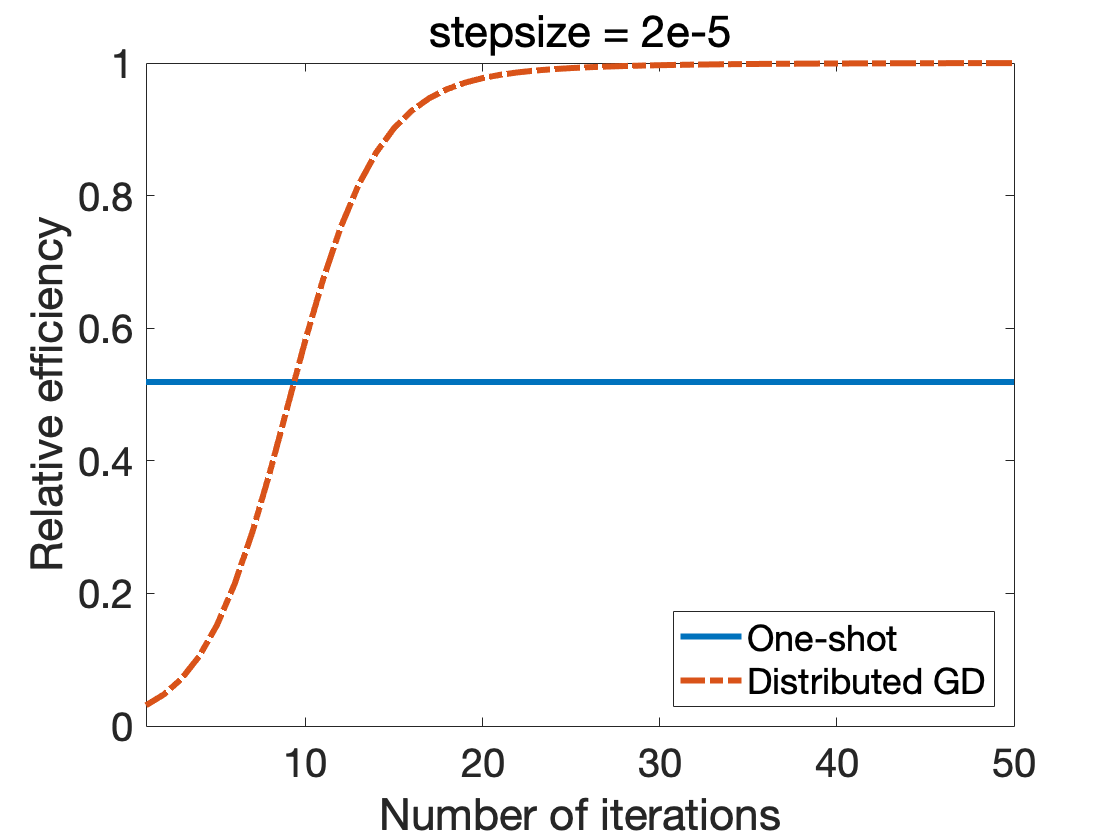}
\end{subfigure}
\begin{subfigure}{.45\textwidth}
  \centering
\includegraphics[scale=0.18]
{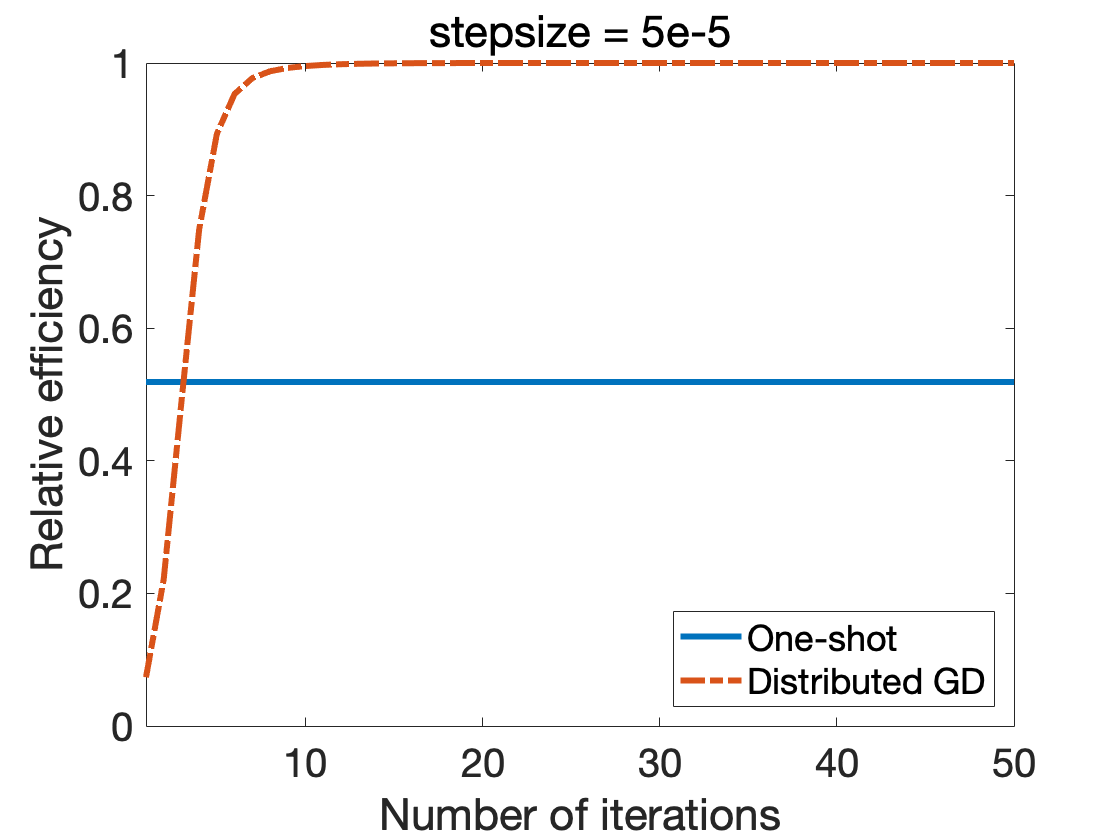}
\end{subfigure}
\begin{subfigure}{.45\textwidth}
  \centering
\includegraphics[scale=0.18]
{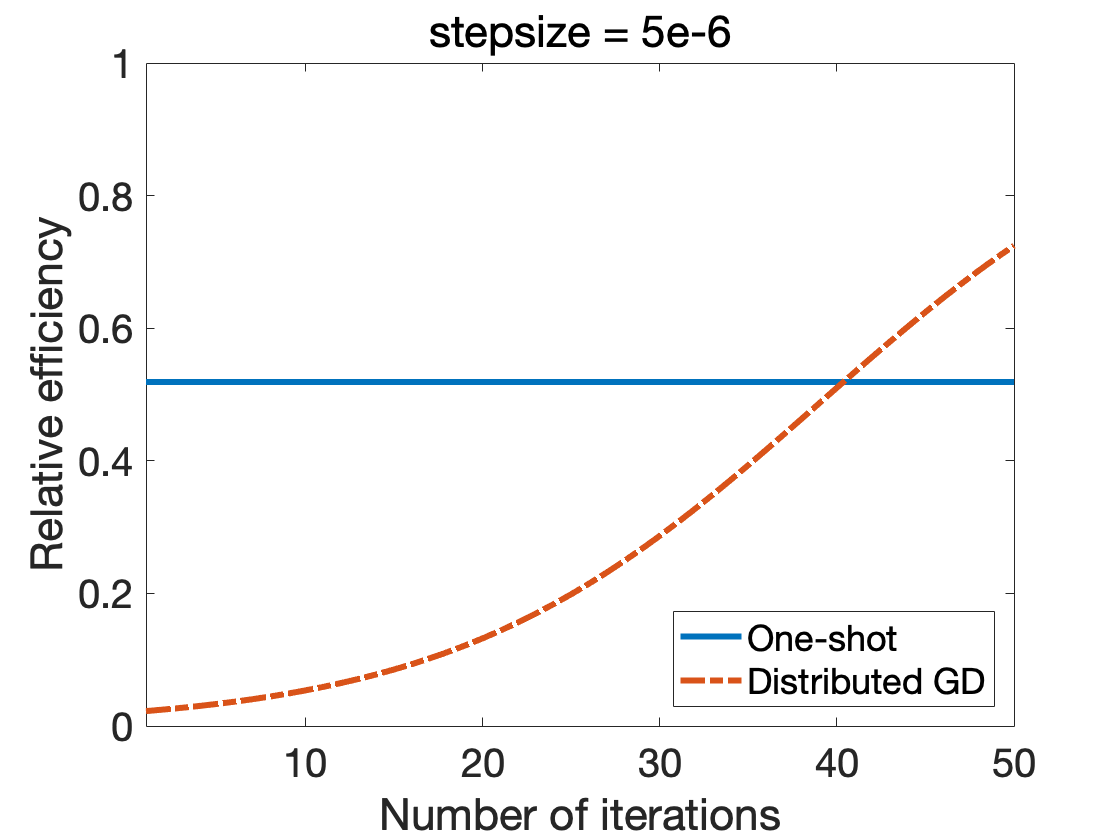}
\end{subfigure}
\caption{Comparison of the one-shot weighted method and distributed GD with different stepsizes. The point of this figure is that the behavior of GD depends strongly on the stepsize. In particular, the number of iterations needed to reach the performance of one-shot regression can vary a lot.}
\label{stepsize}
\end{figure}

We have performed several more numerical simulations to verify our theory, in addition to the results shown in the paper. Due to space limitations,  these are presented in the supplement. In Section \ref{emp}, we present an empirical data example to assess the accuracy of our theoretical results for one-shot averaging. We find that they can be quite accurate.

\section*{Acknowledgments}

We thank Jason D. Lee, Philip Gressman, Andreas Haeberleen, Boaz Nadler, Balasubramanian Narashiman and Ziwei Zhu for helpful discussions. We are grateful to Sifan Liu for providing an initial script for processing the empirical data. We thank John Duchi for pointing out references from convex optimization showing the concavity of the relative efficiencies (proposition \ref{ccv_re2}). We thank Du Bing for pointing out a typo in the definition of $N_i$ in a previous version of the manuscript.
This work was partially supported by NSF BIGDATA grant IIS 1837992. 


{\small
\setlength{\bibsep}{0.0pt plus 0.0ex}
\bibliographystyle{plainnat-abbrev}
\bibliography{references}

\begin{thebibliography}{73}
\providecommand{\natexlab}[1]{#1}
\providecommand{\url}[1]{\texttt{#1}}
\expandafter\ifx\csname urlstyle\endcsname\relax
  \providecommand{\doi}[1]{doi: #1}\else
  \providecommand{\doi}{doi: \begingroup \urlstyle{rm}\Url}\fi

\bibitem[Agarwal et~al.(2014)Agarwal, Chapelle, Dudik, and
  Langford]{agarwal2014reliable}
A.~Agarwal, O.~Chapelle, M.~Dudik, and J.~Langford.
\newblock A reliable effective terascale linear learning system.
\newblock \emph{Journal of Machine Learning Research}, 15:\penalty0 1111--1133,
  2014.

\bibitem[Ali et~al.(2019)Ali, Kolter, and Tibshirani]{alnur}
A.~Ali, J.~Z. Kolter, and R.~J. Tibshirani.
\newblock A continuous-time view of early stopping for least squares
  regression.
\newblock In \emph{Proceedings of Machine Learning Research}, volume~89, pages
  1370--1378, 2019.

\bibitem[Bai and Silverstein(2009)]{bai2009spectral}
Z.~Bai and J.~W. Silverstein.
\newblock \emph{Spectral analysis of large dimensional random matrices}.
\newblock Springer Series in Statistics. Springer, 2009.

\bibitem[Banerjee and Durot(2018)]{banerjee2018removing}
M.~Banerjee and C.~Durot.
\newblock Removing the curse of superefficiency: an effective strategy for
  distributed computing in isotonic regression.
\newblock \emph{arXiv preprint arXiv:1806.08542}, 2018.

\bibitem[Banerjee et~al.(2019)Banerjee, Durot, and Sen]{banerjee2019divide}
M.~Banerjee, C.~Durot, and B.~Sen.
\newblock Divide and conquer in nonstandard problems and the super-efficiency
  phenomenon.
\newblock \emph{The Annals of Statistics}, 47\penalty0 (2):\penalty0 720--757,
  2019.

\bibitem[Battey et~al.(2018)Battey, Fan, Liu, Lu, and
  Zhu]{battey2018distributed}
H.~Battey, J.~Fan, H.~Liu, J.~Lu, and Z.~Zhu.
\newblock Distributed testing and estimation under sparse high dimensional
  models.
\newblock \emph{The Annals of Statistics}, 46\penalty0 (3):\penalty0
  1352--1382, 2018.

\bibitem[Bekkerman et~al.(2011)Bekkerman, Bilenko, and
  Langford]{bekkerman2011scaling}
R.~Bekkerman, M.~Bilenko, and J.~Langford.
\newblock \emph{Scaling up machine learning: Parallel and distributed
  approaches}.
\newblock Cambridge University Press, 2011.

\bibitem[Bertsekas and Tsitsiklis(1989)]{bertsekas1989parallel}
D.~P. Bertsekas and J.~N. Tsitsiklis.
\newblock \emph{Parallel and distributed computation: numerical methods},
  volume~23.
\newblock Prentice hall Englewood Cliffs, NJ, 1989.

\bibitem[Boyd and Vandenberghe(2004)]{boyd2004convex}
S.~Boyd and L.~Vandenberghe.
\newblock \emph{Convex optimization}.
\newblock Cambridge university press, 2004.

\bibitem[Boyd et~al.(2011)Boyd, Parikh, Chu, Peleato, and
  Eckstein]{boyd2011distributed}
S.~Boyd, N.~Parikh, E.~Chu, B.~Peleato, and J.~Eckstein.
\newblock Distributed optimization and statistical learning via the alternating
  direction method of multipliers.
\newblock \emph{Foundations and Trends{\textregistered} in Machine learning},
  3\penalty0 (1):\penalty0 1--122, 2011.

\bibitem[Braverman et~al.(2016)Braverman, Garg, Ma, Nguyen, and
  Woodruff]{braverman2016communication}
M.~Braverman, A.~Garg, T.~Ma, H.~L. Nguyen, and D.~P. Woodruff.
\newblock Communication lower bounds for statistical estimation problems via a
  distributed data processing inequality.
\newblock In \emph{Proceedings of the forty-eighth annual ACM symposium on
  Theory of Computing}, pages 1011--1020. ACM, 2016.

\bibitem[Chen et~al.(2018{\natexlab{a}})Chen, Liu, and Zhang]{chen2018first}
X.~Chen, W.~Liu, and Y.~Zhang.
\newblock First-order newton-type estimator for distributed estimation and
  inference.
\newblock \emph{arXiv preprint arXiv:1811.11368}, 2018{\natexlab{a}}.

\bibitem[Chen et~al.(2018{\natexlab{b}})Chen, Liu, and Zhang]{chen2018quantile}
X.~Chen, W.~Liu, and Y.~Zhang.
\newblock Quantile regression under memory constraint.
\newblock \emph{arXiv preprint arXiv:1810.08264}, 2018{\natexlab{b}}.

\bibitem[Chen and Xie(2014)]{chen2014split}
X.~Chen and M.-g. Xie.
\newblock A split-and-conquer approach for analysis of extraordinarily large
  data.
\newblock \emph{Statistica Sinica}, pages 1655--1684, 2014.

\bibitem[Chu et~al.(2007)Chu, Kim, Lin, Yu, Bradski, Olukotun, and
  Ng]{chu2007map}
C.-T. Chu, S.~K. Kim, Y.-A. Lin, Y.~Yu, G.~Bradski, K.~Olukotun, and A.~Y. Ng.
\newblock Map-reduce for machine learning on multicore.
\newblock In \emph{Advances in neural information processing systems}, pages
  281--288, 2007.

\bibitem[Couillet and Debbah(2011)]{couillet2011random}
R.~Couillet and M.~Debbah.
\newblock \emph{Random {M}atrix {M}ethods for {W}ireless {C}ommunications}.
\newblock Cambridge University Press, 2011.

\bibitem[Couillet and Hachem(2014)]{couillet2014analysis}
R.~Couillet and W.~Hachem.
\newblock Analysis of the limiting spectral measure of large random matrices of
  the separable covariance type.
\newblock \emph{Random Matrices: Theory and Applications}, 3\penalty0
  (04):\penalty0 1450016, 2014.

\bibitem[Couillet et~al.(2011)Couillet, Debbah, and
  Silverstein]{couillet2011deterministic}
R.~Couillet, M.~Debbah, and J.~W. Silverstein.
\newblock A deterministic equivalent for the analysis of correlated mimo
  multiple access channels.
\newblock \emph{IEEE Trans. Inform. Theory}, 57\penalty0 (6):\penalty0
  3493--3514, 2011.

\bibitem[Davis(1957)]{davis1957all}
C.~Davis.
\newblock All convex invariant functions of hermitian matrices.
\newblock \emph{Archiv der Mathematik}, 8\penalty0 (4):\penalty0 276--278,
  1957.

\bibitem[Dean and Ghemawat(2008)]{dean2008mapreduce}
J.~Dean and S.~Ghemawat.
\newblock Mapreduce: simplified data processing on large clusters.
\newblock \emph{Communications of the ACM}, 51\penalty0 (1):\penalty0 107--113,
  2008.

\bibitem[Dobriban and Sheng(2019)]{dobriban2019one}
E.~Dobriban and Y.~Sheng.
\newblock One-shot distributed ridge regression in high dimensions.
\newblock \emph{arXiv preprint arXiv:1903.09321}, 2019.

\bibitem[Dobriban and Wager(2018)]{dobriban2018high}
E.~Dobriban and S.~Wager.
\newblock High-dimensional asymptotics of prediction: Ridge regression and
  classification.
\newblock \emph{The Annals of Statistics}, 46\penalty0 (1):\penalty0 247--279,
  2018.

\bibitem[Dobriban et~al.(2017)Dobriban, Leeb, and Singer]{dobriban2017optimal}
E.~Dobriban, W.~Leeb, and A.~Singer.
\newblock Optimal prediction in the linearly transformed spiked model.
\newblock \emph{arXiv preprint arXiv:1709.03393}, 2017.

\bibitem[Dobriban et~al.(2019)Dobriban, Leeb, and Singer]{dobriban2019pca}
E.~Dobriban, W.~Leeb, and A.~Singer.
\newblock Theoretical justification for exponential family pca.
\newblock \emph{forthcoming}, 2019.

\bibitem[Donoho and Montanari(2013)]{donoho2013high}
D.~L. Donoho and A.~Montanari.
\newblock High dimensional robust {M}-estimation: Asymptotic variance via
  approximate message passing.
\newblock \emph{arXiv preprint arXiv:1310.7320}, 2013.

\bibitem[Duchi et~al.(2011)Duchi, Agarwal, and Wainwright]{duchi2011dual}
J.~C. Duchi, A.~Agarwal, and M.~J. Wainwright.
\newblock Dual averaging for distributed optimization: Convergence analysis and
  network scaling.
\newblock \emph{IEEE Transactions on Automatic control}, 57\penalty0
  (3):\penalty0 592--606, 2011.

\bibitem[Duchi et~al.(2014)Duchi, Jordan, Wainwright, and
  Zhang]{duchi2014optimality}
J.~C. Duchi, M.~I. Jordan, M.~J. Wainwright, and Y.~Zhang.
\newblock Optimality guarantees for distributed statistical estimation.
\newblock \emph{arXiv preprint arXiv:1405.0782}, 2014.

\bibitem[El~Karoui et~al.(2013)El~Karoui, Bean, Bickel, Lim, and
  Yu]{el2013robust}
N.~El~Karoui, D.~Bean, P.~J. Bickel, C.~Lim, and B.~Yu.
\newblock On robust regression with high-dimensional predictors.
\newblock \emph{Proc. Natl. Acad. Sci. USA}, 110\penalty0 (36):\penalty0
  14557--14562, 2013.

\bibitem[Fan et~al.(2017)Fan, Wang, Wang, and Zhu]{fan2017distributed}
J.~Fan, D.~Wang, K.~Wang, and Z.~Zhu.
\newblock Distributed estimation of principal eigenspaces.
\newblock \emph{arXiv preprint arXiv:1702.06488}, 2017.

\bibitem[Hachem et~al.(2007)Hachem, Loubaton, and
  Najim]{hachem2007deterministic}
W.~Hachem, P.~Loubaton, and J.~Najim.
\newblock Deterministic equivalents for certain functionals of large random
  matrices.
\newblock \emph{The Annals of Applied Probability}, 17\penalty0 (3):\penalty0
  875--930, 2007.

\bibitem[Hastie et~al.(2019)Hastie, Montanari, Rosset, and
  Tibshirani]{hastie2019surprises}
T.~Hastie, A.~Montanari, S.~Rosset, and R.~J. Tibshirani.
\newblock Surprises in high-dimensional ridgeless least squares interpolation.
\newblock \emph{arXiv preprint arXiv:1903.08560}, 2019.

\bibitem[Hong et~al.(2018{\natexlab{a}})Hong, Balzano, and
  Fessler]{hong2018asymptotic}
D.~Hong, L.~Balzano, and J.~A. Fessler.
\newblock Asymptotic performance of pca for high-dimensional heteroscedastic
  data.
\newblock \emph{Journal of multivariate analysis}, 167:\penalty0 435--452,
  2018{\natexlab{a}}.

\bibitem[Hong et~al.(2018{\natexlab{b}})Hong, Fessler, and
  Balzano]{hong2018optimally}
D.~Hong, J.~A. Fessler, and L.~Balzano.
\newblock Optimally weighted pca for high-dimensional heteroscedastic data.
\newblock \emph{arXiv preprint arXiv:1810.12862}, 2018{\natexlab{b}}.

\bibitem[Huo and Cao(2018)]{huo2018aggregated}
X.~Huo and S.~Cao.
\newblock Aggregated inference.
\newblock \emph{Wiley Interdisciplinary Reviews: Computational Statistics},
  page e1451, 2018.

\bibitem[Jordan et~al.(2016)Jordan, Lee, and Yang]{jordan2016communication}
M.~I. Jordan, J.~D. Lee, and Y.~Yang.
\newblock Communication-efficient distributed statistical inference.
\newblock \emph{arXiv preprint arXiv:1605.07689}, 2016.

\bibitem[Lee et~al.(2017)Lee, Liu, Sun, and Taylor]{lee2017communication}
J.~D. Lee, Q.~Liu, Y.~Sun, and J.~E. Taylor.
\newblock Communication-efficient sparse regression.
\newblock \emph{Journal of Machine Learning Research}, 18\penalty0
  (5):\penalty0 1--30, 2017.

\bibitem[Lewis(1996)]{lewis1996convex}
A.~S. Lewis.
\newblock Convex analysis on the hermitian matrices.
\newblock \emph{SIAM Journal on Optimization}, 6\penalty0 (1):\penalty0
  164--177, 1996.

\bibitem[Lin et~al.(2017)Lin, Guo, and Zhou]{lin2017distributed}
S.-B. Lin, X.~Guo, and D.-X. Zhou.
\newblock Distributed learning with regularized least squares.
\newblock \emph{The Journal of Machine Learning Research}, 18\penalty0
  (1):\penalty0 3202--3232, 2017.

\bibitem[Liu et~al.(2018)Liu, Dobriban, Singer, et~al.]{liu2018pca}
L.~T. Liu, E.~Dobriban, A.~Singer, et~al.
\newblock $ e $ pca: High dimensional exponential family pca.
\newblock \emph{The Annals of Applied Statistics}, 12\penalty0 (4):\penalty0
  2121--2150, 2018.

\bibitem[Liu and Ihler(2014)]{liu2014distributed}
Q.~Liu and A.~T. Ihler.
\newblock Distributed estimation, information loss and exponential families.
\newblock In \emph{Advances in Neural Information Processing Systems}, pages
  1098--1106, 2014.

\bibitem[Liu and Dobriban(2019)]{dobriban2019ridge}
S.~Liu and E.~Dobriban.
\newblock Ridge regression: Structure, cross-validation, and sketching.
\newblock \emph{arXiv preprint arXiv:1910.02373}, 2019.

\bibitem[Marchenko and Pastur(1967)]{marchenko1967distribution}
V.~A. Marchenko and L.~A. Pastur.
\newblock Distribution of eigenvalues for some sets of random matrices.
\newblock \emph{Mat. Sb.}, 114\penalty0 (4):\penalty0 507--536, 1967.

\bibitem[Mardia et~al.(1979)Mardia, Kent, and Bibby]{mardia1979multivariate}
K.~Mardia, J.~T. Kent, and J.~M. Bibby.
\newblock \emph{Multivariate analysis}.
\newblock Academic Press, 1979.

\bibitem[Mcdonald et~al.(2009)Mcdonald, Mohri, Silberman, Walker, and
  Mann]{mcdonald2009efficient}
R.~Mcdonald, M.~Mohri, N.~Silberman, D.~Walker, and G.~S. Mann.
\newblock Efficient large-scale distributed training of conditional maximum
  entropy models.
\newblock In \emph{Advances in Neural Information Processing Systems}, pages
  1231--1239, 2009.

\bibitem[M{\"u}ller and Debbah(2016)]{muller2016random}
A.~M{\"u}ller and M.~Debbah.
\newblock Random matrix theory tutorial--introduction to deterministic
  equivalents.
\newblock \emph{TRAITEMENT DU SIGNAL}, 33\penalty0 (2-3):\penalty0 223--248,
  2016.

\bibitem[Nedic and Ozdaglar(2009)]{nedic2009distributed}
A.~Nedic and A.~Ozdaglar.
\newblock Distributed subgradient methods for multi-agent optimization.
\newblock \emph{IEEE Transactions on Automatic Control}, 54\penalty0
  (1):\penalty0 48, 2009.

\bibitem[Nesterov(2009)]{nesterov2009primal}
Y.~Nesterov.
\newblock Primal-dual subgradient methods for convex problems.
\newblock \emph{Mathematical programming}, 120\penalty0 (1):\penalty0 221--259,
  2009.

\bibitem[Paul and Silverstein(2009)]{paul2009no}
D.~Paul and J.~W. Silverstein.
\newblock No eigenvalues outside the support of the limiting empirical spectral
  distribution of a separable covariance matrix.
\newblock \emph{Journal of Multivariate Analysis}, 100\penalty0 (1):\penalty0
  37--57, 2009.

\bibitem[Peacock et~al.(2008)Peacock, Collings, and
  Honig]{peacock2008eigenvalue}
M.~J. Peacock, I.~B. Collings, and M.~L. Honig.
\newblock Eigenvalue distributions of sums and products of large random
  matrices via incremental matrix expansions.
\newblock \emph{IEEE Transactions on Information Theory}, 54\penalty0
  (5):\penalty0 2123--2138, 2008.

\bibitem[Rosenblatt and Nadler(2016)]{rosenblatt2016optimality}
J.~D. Rosenblatt and B.~Nadler.
\newblock On the optimality of averaging in distributed statistical learning.
\newblock \emph{Information and Inference: A Journal of the IMA}, 5\penalty0
  (4):\penalty0 379--404, 2016.

\bibitem[Rubio and Mestre(2011)]{rubio2011spectral}
F.~Rubio and X.~Mestre.
\newblock Spectral convergence for a general class of random matrices.
\newblock \emph{Statistics \& Probability Letters}, 81\penalty0 (5):\penalty0
  592--602, 2011.

\bibitem[Serdobolskii(1983)]{serdobolskii1983minimum}
V.~I. Serdobolskii.
\newblock On minimum error probability in discriminant analysis.
\newblock In \emph{Dokl. Akad. Nauk SSSR}, volume~27, pages 720--725, 1983.

\bibitem[Serdobolskii(2007)]{serdobolskii2007multiparametric}
V.~I. Serdobolskii.
\newblock \emph{Multiparametric {S}tatistics}.
\newblock Elsevier, 2007.

\bibitem[Shamir et~al.(2014)Shamir, Srebro, and Zhang]{DANE14}
O.~Shamir, N.~Srebro, and T.~Zhang.
\newblock Communication-efficient distributed optimization using an approximate
  newton-type method.
\newblock In \emph{Proceedings of the 31st International Conference on Machine
  Learning}, volume~32, pages 1000--1008, 2014.

\bibitem[Shi et~al.(2018)Shi, Lu, and Song]{shi2018massive}
C.~Shi, W.~Lu, and R.~Song.
\newblock A massive data framework for m-estimators with cubic-rate.
\newblock \emph{Journal of the American Statistical Association}, 113\penalty0
  (524):\penalty0 1698--1709, 2018.

\bibitem[Smith et~al.(2016)Smith, Forte, Ma, Tak{\'a}c, Jordan, and
  Jaggi]{smith2016cocoa}
V.~Smith, S.~Forte, C.~Ma, M.~Tak{\'a}c, M.~I. Jordan, and M.~Jaggi.
\newblock Cocoa: A general framework for communication-efficient distributed
  optimization.
\newblock \emph{arXiv preprint arXiv:1611.02189}, 2016.

\bibitem[Szabo and van Zanten(2018)]{szabo2018adaptive}
B.~Szabo and H.~van Zanten.
\newblock Adaptive distributed methods under communication constraints.
\newblock \emph{arXiv preprint arXiv:1804.00864}, 2018.

\bibitem[Tandon et~al.(2017)Tandon, Lei, Dimakis, and
  Karampatziakis]{tandon2017gradient}
R.~Tandon, Q.~Lei, A.~G. Dimakis, and N.~Karampatziakis.
\newblock Gradient coding: Avoiding stragglers in distributed learning.
\newblock In \emph{International Conference on Machine Learning}, pages
  3368--3376, 2017.

\bibitem[Tsitsiklis et~al.(1986)Tsitsiklis, Bertsekas, and
  Athans]{tsitsiklis1986distributed}
J.~Tsitsiklis, D.~Bertsekas, and M.~Athans.
\newblock Distributed asynchronous deterministic and stochastic gradient
  optimization algorithms.
\newblock \emph{IEEE transactions on automatic control}, 31\penalty0
  (9):\penalty0 803--812, 1986.

\bibitem[Tulino and Verd{\'u}(2004)]{tulino2004random}
A.~M. Tulino and S.~Verd{\'u}.
\newblock Random matrix theory and wireless communications.
\newblock \emph{Communications and Information theory}, 1\penalty0
  (1):\penalty0 1--182, 2004.

\bibitem[Volgushev et~al.(2019)Volgushev, Chao, and
  Cheng]{volgushev2019distributed}
S.~Volgushev, S.-K. Chao, and G.~Cheng.
\newblock Distributed inference for quantile regression processes.
\newblock \emph{The Annals of Statistics}, 47\penalty0 (3):\penalty0
  1634--1662, 2019.

\bibitem[Wang et~al.(2017)Wang, Kolar, Srebro, and Zhang]{wang2017efficient}
J.~Wang, M.~Kolar, N.~Srebro, and T.~Zhang.
\newblock Efficient distributed learning with sparsity.
\newblock In \emph{Proceedings of the 34th International Conference on Machine
  Learning-Volume 70}, pages 3636--3645. JMLR. org, 2017.

\bibitem[Wickham(2018)]{nycflights13}
H.~Wickham.
\newblock \emph{nycflights13: Flights that Departed NYC in 2013}, 2018.
\newblock URL \url{https://CRAN.R-project.org/package=nycflights13}.
\newblock R package version 1.0.0.

\bibitem[Yao et~al.(2007)Yao, Rosasco, and Caponnetto]{yao2007early}
Y.~Yao, L.~Rosasco, and A.~Caponnetto.
\newblock On early stopping in gradient descent learning.
\newblock \emph{Constructive Approximation}, 26\penalty0 (2):\penalty0
  289--315, 2007.

\bibitem[Zaharia et~al.(2010)Zaharia, Chowdhury, Franklin, Shenker, and
  Stoica]{zaharia2010spark}
M.~Zaharia, M.~Chowdhury, M.~J. Franklin, S.~Shenker, and I.~Stoica.
\newblock Spark: Cluster computing with working sets.
\newblock \emph{HotCloud}, 10\penalty0 (10-10):\penalty0 95, 2010.

\bibitem[Zhang et~al.(2012)Zhang, Wainwright, and
  Duchi]{zhang2012communication}
Y.~Zhang, M.~J. Wainwright, and J.~C. Duchi.
\newblock Communication-efficient algorithms for statistical optimization.
\newblock In \emph{Advances in Neural Information Processing Systems}, pages
  1502--1510, 2012.

\bibitem[Zhang et~al.(2013{\natexlab{a}})Zhang, Duchi, and
  Wainwright]{zhang2013divide}
Y.~Zhang, J.~Duchi, and M.~Wainwright.
\newblock Divide and conquer kernel ridge regression.
\newblock In \emph{Conference on Learning Theory}, pages 592--617,
  2013{\natexlab{a}}.

\bibitem[Zhang et~al.(2013{\natexlab{b}})Zhang, Duchi, and
  Wainwright]{zhang2013communication}
Y.~Zhang, J.~C. Duchi, and M.~J. Wainwright.
\newblock Communication-efficient algorithms for statistical optimization.
\newblock \emph{Journal of Machine Learning Research}, 14:\penalty0 3321--3363,
  2013{\natexlab{b}}.

\bibitem[Zhang et~al.(2015)Zhang, Duchi, and Wainwright]{zhang2015divide}
Y.~Zhang, J.~Duchi, and M.~Wainwright.
\newblock Divide and conquer kernel ridge regression: A distributed algorithm
  with minimax optimal rates.
\newblock \emph{The Journal of Machine Learning Research}, 16\penalty0
  (1):\penalty0 3299--3340, 2015.

\bibitem[Zhao et~al.(2016)Zhao, Cheng, and Liu]{zhao2016partially}
T.~Zhao, G.~Cheng, and H.~Liu.
\newblock A partially linear framework for massive heterogeneous data.
\newblock \emph{Annals of statistics}, 44\penalty0 (4):\penalty0 1400, 2016.

\bibitem[Zhu and Lafferty(2018)]{zhu2018distributed}
Y.~Zhu and J.~Lafferty.
\newblock Distributed nonparametric regression under communication constraints.
\newblock \emph{arXiv preprint arXiv:1803.01302}, 2018.

\bibitem[Zinkevich et~al.(2009)Zinkevich, Langford, and
  Smola]{zinkevich2009slow}
M.~Zinkevich, J.~Langford, and A.~J. Smola.
\newblock Slow learners are fast.
\newblock In \emph{Advances in neural information processing systems}, pages
  2331--2339, 2009.

\bibitem[Zinkevich et~al.(2010)Zinkevich, Weimer, Li, and
  Smola]{zinkevich2010parallelized}
M.~Zinkevich, M.~Weimer, L.~Li, and A.~J. Smola.
\newblock Parallelized stochastic gradient descent.
\newblock In \emph{Advances in neural information processing systems}, pages
  2595--2603, 2010.

\end{thebibliography}
}

\appendix

\numberwithin{equation}{section}


\section{Form of optimal weights}
\label{pf:re_ols}

Here we describe the proof of the form of optimal weights for the special case of parameter estimation.
Each local estimator is unbiased, and has MSE $M_i =  \sigma^2 \tr [(X_i^\top X_i)^{-1}]$. If we restrict to $\sum_{i=1}^k w_i=1$, then the weighted estimator is unbiased and its MSE equals
$$MSE(w) = \sum_i w_i^2 \cdot MSE(\hbeta_i) = \sum_i w_i^2 \cdot M_i.$$

Clearly, to minimize this subject to $\sum_i w_i =1$, by the Cauchy-Schwarz inequality we should take $w_i^*  = M_i^{-1}/(\sum_j M_j^{-1})$, and the minimum is $1/(\sum_j M_j^{-1})$. This finishes the proof. 




\section{Proof of Proposition \ref{ccv_re2}}
\label{pf:ccv_re2}

 Notice that, it is  sufficient to show that, for any given $A$, the function $f(X)=1/\tr(X^{-1}A^\top A)$ is concave on positive definite matrices. Similar to the proof of proposition 2.2., we can define $g(t)=f(X+tV)$. The constraints on $X, V, X+tV$ are the same as before. Now, we have

\begin{align*}
g(t)&=\frac{1}{\tr((X+tV)^{-1}A^\top A)}=\frac{1}{\tr((I+tX^{-1/2}VX^{-1/2})^{-1}X^{-1/2}A^\top AX^{-1/2})}\\
&=\frac{1}{\tr((I+t\Lambda)^{-1}Q^\top X^{-1/2}A^\top AX^{-1/2}Q)}\\
&=\bigg(\sum_{i=1}^n\frac{(Q^\top X^{-1/2}A^\top AX^{-1/2}Q)_{ii}}{1+t\lambda_i}\bigg)^{-1}.
\end{align*}
Since $Q^\top X^{-1/2}A^\top AX^{-1/2}Q$ is always nonnegative definite, we can get the desired result by an explicit calculation. We show below the steps for $A=I$ for simplicity, but the same steps extend to general $A$. 

Let us define $g(t)=f(X+tV)$, where $X\succ 0$ is a positive definite matrix and $V$ is any symmetric matrix such that $X+tV\succ 0$ is still positive definite. Then $f(X)$ is concave iff $g(t)$ is concave on its domain for any $X$ and $V$.\\
Now we have
\begin{align*}
g(t)=\frac{1}{\tr[(X+tV)^{-1}]} &=\frac{1}{\tr[X^{-1}(I+tX^{-1/2}VX^{-1/2})^{-1}]}\\
& = \frac{1}{\tr[X^{-1}Q(I+t\Lambda)^{-1}Q^\top]}\\
& = \frac{1}{\tr[Q^\top X^{-1}Q(I+t\Lambda)^{-1}]}\\
& = \bigg(\sum_{i=1}^n\frac{(Q^\top X^{-1}Q)_{ii}}{1+t\lambda_i}\bigg)^{-1},
\end{align*}
where $\lambda_i$-s are eigenvalues of $X^{-1/2}VX^{-1/2}$. From the assumption, we always have $1+t\lambda_i>0$. Since $Q^\top X^{-1}Q$ is a positive definite matrix, its diagonal elements are all positive. We may use the notation $\alpha_i=(Q^\top X^{-1}Q)_{ii}$. Then, let us compute $g'(t)$ and $g''(t)$. First we have
$$
g'(t)=\bigg(\sum_{i=1}^n\frac{\alpha_i}{1+\lambda_it}\bigg)^{-2}\cdot\bigg(\sum_{i=1}^n\frac{\alpha_i\lambda_i}{(1+\lambda_it)^2}\bigg),$$
Next, we find
\begin{align*}
g''(t)&=2\bigg(\sum_{i=1}^n\frac{\alpha_i}{1+\lambda_it}\bigg)^{-3}\cdot\bigg[\bigg(\sum_{i=1}^n\frac{\alpha_i\lambda_i}{(1+\lambda_it)^2}\bigg)^2-\bigg(\sum_{i=1}^n\frac{\alpha_i}{1+\lambda_it}\bigg)\bigg(\sum_{i=1}^n\frac{\alpha_i\lambda_i^2}{(1+\lambda_it)^3}\bigg)\bigg]\end{align*}
Multiplying by $-2\bigg(\sum_{i=1}^n\frac{\alpha_i}{1+\lambda_it}\bigg)^{3}$, we get the expression
\begin{align*}
&\sum_{1\leq i<j\leq n}\frac{\alpha_i\alpha_j\lambda_j^2}{(1+\lambda_it)(1+\lambda_jt)^3}
+\frac{\alpha_j\alpha_i\lambda_i^2}{(1+\lambda_jt)(1+\lambda_it)^3}
-\frac{2\alpha_i\alpha_j\lambda_i\lambda_j}{(1+\lambda_it)^2(1+\lambda_jt)^2}\\
&=\sum_{1\leq i<j\leq n}\frac{\alpha_i\alpha_j}{(1+\lambda_it)^3(1+\lambda_jt)^3}[\lambda_j^2(1+\lambda_it)^2+\lambda_i^2(1+\lambda_jt)^2
-2\lambda_i\lambda_j(1+\lambda_it)(1+\lambda_jt)]\\
&=\sum_{1\leq i<j\leq n}\frac{\alpha_i\alpha_j(\lambda_i-\lambda_j)^2}{(1+\lambda_it)^3(1+\lambda_jt)^3}\geq 0.
\end{align*}
Hence $g(t)$ concave, and so is $f(X)$. 

We can use the convexity directly to check $RE$ is less than or equal to unity. Indeed, $f$ is affine, in the sense that $f(c X) = c f(X)$ for any $c>0$. The concavity result that we proved implies that, with $A_i = X_i^\top X_i$,
$$\sum_{i=1}^k f(A_i)/k \le f\left (\sum_{i=1}^k A_i/k\right ).$$
By the affine nature of $f$, this result implies that $f$ is sub-additive. This can be checked to be equivalent to $RE \le 1$, finishing the proof.

\section{Computing optimal weights in the general framework, Section \ref{examples}}
\label{gen_w}
Recall that we have 
\begin{align*}
M(\hbeta_0) &= \E\| L_A  - \hat L_A(\hbeta_0)\|^2 
= \E\|A (\beta-\hbeta_0) + Z\|^2
= \tr\left(\Cov{ A\hbeta_0-Z}\right)\\
&= \tr\left(\Cov{ A\hbeta_0}\right) 
- 2\tr\left(\Cov{ A\hbeta_0, Z}\right)
+ \tr\left(\Cov{Z, Z}\right)\\
&= \tr\left(\Cov{\hbeta_0} A^\top A\right) 
- 2 \tr\left( A\Cov{\hbeta_0,Z}\right)
+ hd\sigma^2
\end{align*}
For OLS, we can calculate, recalling  $N  = \Cov{\ep,Z}$,  $\Cov{\hbeta, Z} = (X^\top X)^{-1} X^\top N.$
Hence
\begin{align*}
M(\hbeta)  &= \sigma^2\cdot \left[\tr\left[(X^\top X)^{-1} A^\top A\right] 
- 2 \tr\left[ A (X^\top X)^{-1} X^\top N\right]
+ hd\right].
\end{align*}
For the distributed estimator $\hbeta_{dist}(w) = \sum_i w_i \hbeta_i$, we have 
\begin{align*}
\Cov{\hbeta_{dist}, Z} 
&= \Cov{\sum_i w_i (X_i^\top X_i)^{-1} X_i^\top \ep_i, Z} \\
&= \sum_i w_i (X_i^\top X_i)^{-1} X_i^\top \Cov{ \ep_i, Z}
= \sum_i w_i (X_i^\top X_i)^{-1} X_i^\top N_i
\end{align*}
Above, we denoted $N_i:=\Cov{ \ep_i, Z}$.  Therefore, 
$$M(\hbeta_{dist}) 
= \sigma^2\cdot \left(\sum_{i=1}^k w_i^2 \cdot \tr \left[ (X_i^\top X_i)^{-1} A^\top A\right]
- 2 w_i \cdot \tr\left[ A(X_i^\top X_i)^{-1} X_i^\top N_i\right]\right ) + \sigma^2hd.$$ 
To find the optimal weights, we consider more generally the quadratic optimization problem 
$$\min_{w \in \R^k} \sum_{i=1}^k \frac{a_i}{2} w_i^2 - b_i w_i $$
subject to $\sum_{i=1}^k w_i = 1.$ We assume that $a_i >0$. In that case, the problem is convex, and we can use a simple Lagrangian reformulation to solve it.  Note that we do not impose the constraint $w_i \ge 0$, because in principle one could allow negative weights, and because it is usually satisfied without imposing the constraint. 

Denoting by $\Psi(w)$ the objective, we consider the problem of minimizing the Lagrangian $\Psi_\lambda(w) = \Psi(w) -\lambda (\sum_i w_i -1)$. It is easy to check that the condition $\frac{\partial\Psi_\lambda}{\partial w_i} = 0$ reduces to
$$w_i = \frac{\lambda+b_i}{a_i}.$$
In order for the constraint $\sum_{i=1}^k w_i = 1$ to be satisfied, we need that 
$$\lambda  = \lambda^*:= \frac{1-\sum_{i=1}^k\frac{b_i}{a_i}}{\sum_{i=1}^k\frac{1}{a_i}}.$$
Plugging back this value of $\lambda$, we obtain the optimal value or the weights $w_i^*$.  To apply this result to our problem, we choose $a_i = \tr \left[ (X_i^\top X_i)^{-1} A^\top A\right]$, and $b_i $ $= $ $\tr\left( A(X_i^\top X_i)^{-1} X_i^\top N_i\right)$. This finishes the proof.

\section{Proof of Theorem \ref{gen_det}}
\label{pf:gen_det}

We want to show 
$$\hSigma^{-1} \asymp \Sigma^{-1} \cdot e_p.$$
As mentioned, the proof of this result relies on the generalized Marchenko-Pastur theorem of \cite{rubio2011spectral}. From that result, we have under the stated assumptions
$$(\hSigma-z I)^{-1} \asymp (x_p\Sigma-z I)^{-1},$$
where $x_p=x_p(z), e_p=e_p(z)$ are the unique solutions of the system 

\begin{align*}
e_p &= \frac{1}{p} \tr\left[\Sigma (x_p\Sigma- z I )^{-1}\right],\, 
x_p = \frac{1}{n} \tr\left[\Gamma (I+\gamma_p e_p \Gamma)^{-1}\right].
\end{align*}

From section 2.2 of \cite{paul2009no}, when the e.s.d of $\Sigma$ converges to $H$ and the e.s.d of $\Gamma$ converges to $G$, $x_p$ and $e_p$ will converge to $x$ and $e$ respectively, where $x=x(z)$ and $e=e(z)$ are the unique solutions of the system
\begin{align*}
e &= \int\frac{t}{tx-z}dH(t),\,
x = \int\frac{t}{1+\gamma te}dG(t).
\end{align*}

Recall that, in Section \ref{pf:are_ell}, we have the system of equations
$$
\delta=\int\frac{\gamma t}{-z(1+\tilde{\delta}t)}d\mu_H(t),\,\,
\tilde{\delta}=\int\frac{t}{-z(1+\delta t)}d\mu_G(t).
$$
Then, it's easy to check that $\delta=\gamma e$ and $x=-z\tilde{\delta}$. We will use these relations later.

Now, we want to show that we can take $z=0$, i.e.
$$\hSigma^{-1} \asymp (x_p(0)\Sigma)^{-1}=\Sigma^{-1}\cdot e_p(0).$$ So for a given sequence of matrices $C_p$ with bounded trace norm we need to bound

\begin{align*}
\Delta_p :&= \tr[C_p (\hSigma^{-1} - (x_p(0)\Sigma)^{-1})]\\ 
&=\tr[C_p (\hSigma^{-1} -  (\hSigma-zI)^{-1})] + 
\tr[C_p ((\hSigma-zI)^{-1}- (x_p\Sigma-zI)^{-1}))]\\
 &+ \tr[C_p ((x_p\Sigma-zI)^{-1} - (x_p\Sigma)^{-1}) ]+\tr[C_p((x_p\Sigma)^{-1}-(x_p(0)\Sigma)^{-1})] \\
& = \Delta_{p}^1+\Delta_{p}^2+\Delta_{p}^3+\Delta_p^4.
\end{align*}

We can bound the four error terms in turn:
\benum 
\item Bounding $\Delta_{p}^1$:

We have 
$$D_1(z) = (\hSigma-z I)^{-1} - \hSigma^{-1} = z (\hSigma-z I)^{-1} \hSigma^{-1}.$$
Hence, the operator norm of $D(z)$ can be bounded as
$$\|D_1(z)\|_{op} \le \frac{2|z|}{\lambda_{\min}(\hSigma)^2}$$
for sufficiently small $z$.

Recall that $X=\Gamma^{1/2}Z\Sigma^{1/2}$, where $\Gamma$ is a diagonal matrix with positive entries and $\Sigma$ is a symmetric positive definite matrix. Let us consider the least singular value of $X$. By assumption, the entries of $\Gamma$ and the eigenvalues of $\Sigma$ are uniformly bounded below by some constant K. So we can bound $\sigma_{\min}(X)$ as follows:
$$
\sigma_{\min}(X)=\sigma_{\min}(\Gamma^{1/2}Z\Sigma^{1/2})\geq\sigma_{\min}(\Gamma^{1/2})\sigma_{\min}(Z)\sigma_{\min}(\Sigma^{1/2})\geq K\cdot\sigma_{\min}(Z).
$$
By using the bound above, we have
\begin{align*}
\lambda_{\min}(\hat{\Sigma})
&=\lambda_{\min}(\frac{X^\top X}{n})
=\frac{(\sigma_{\min}(X))^2}{n}\geq \frac{K^2\cdot(\sigma_{\min}(Z))^2}{n}\\
&=K^2\cdot\lambda_{\min}(\frac{Z^\top Z}{n})\to_{a.s.}K^2(1-\sqrt\gamma)^2,
\end{align*}
where the final step comes from the well-known Bai-Yin law \citep{bai2009spectral}.

Thus, we conclude that
\begin{align*}
\lim_{p\rightarrow+\infty}|\Delta_{p}^1| &=\lim_{p\rightarrow+\infty}|\tr[C_p (\hSigma^{-1} -  (\hSigma-zI)^{-1})]|\\ 
&\le \lim_{p\rightarrow+\infty}\|C_p\|_{tr} \cdot\|D_1(z)\|_{op}  \le\lim_{p\rightarrow+\infty} \|C_p\|_{tr}\cdot\frac{2|z|}{(K^2\cdot\lambda_{\min}(Z^\top Z/n))^2}\le C' |z|.
\end{align*}
This holds almost surely, for some fixed constant $C'>0$.\\

\item Bounding $\Delta_{p}^2$:

This just follows Theorem 1 of \cite{rubio2011spectral}:

$$|\Delta_p^2|=\tr[C_p ((\hSigma-zI)^{-1}- (x_p\Sigma-zI)^{-1}))] \to_{a.s.} 0.$$

\item Bounding $\Delta_{p}^3$:

By a similar logic, we can obtain a bound on the operator norm of 

$$D_2(z) = (x_p\Sigma-z I)^{-1} - (x_p\Sigma)^{-1}$$
for sufficiently small $z$, of the form 
$$\|D_2(z)\|_{op} \le \frac{2|z|}{ |x_p(z)|^2\cdot \lambda_{\min}(\Sigma)^2}$$
for sufficiently small $z$. Again, we have assumed that the smallest eigenvalues of $\Sigma$ are always bounded away from zero, so that $\lambda_{\min}(\Sigma)>c>0$ for some fixed constant $c>0$. Since $x_p(z)\rightarrow x(z)=-z\tilde{\delta}(z)$ as $p\rightarrow+\infty$, and we know that $-z\tilde{\delta}(z)$ is analytic in a neighborhood of the origin with $x(0)=\lim_{z\to0}[-z\tilde{\delta}(z)]=\tilde{\rho}(\{0\})>0$. We can argue that $|x(z)|$ is bounded below in a neighborhood of the origin.

So we conclude that
\begin{align*}
\lim_{p\rightarrow+\infty}|\Delta_{p}^3| &=\lim_{p\rightarrow+\infty}|\tr[C_p ((x_p\Sigma-z I)^{-1} - (x_p\Sigma)^{-1})]|\\ 
&\le \lim_{p\rightarrow+\infty}\|C_p\|_{tr} \cdot\|D_2(z)\|_{op}\\
&\le \limsup\|C_p\|_{tr} \cdot\frac{2|z|}{ |x(z)|^2\cdot \lambda_{\min}(\Sigma)^2} \le C'' |z|.
\end{align*}
This holds almost surely, for some fixed constant $C''>0$.\\

\item Bounding $\Delta_{p}^4$:

\begin{align*}
\lim_{p\rightarrow+\infty}|\Delta_{p}^4| &=\lim_{p\rightarrow+\infty}|\tr[C_p((x_p\Sigma)^{-1}-(x_p(0)\Sigma)^{-1})]\\ 
&\le \lim_{p\rightarrow+\infty}\|C_p\|_{tr} \cdot\frac{|x_p(z)^{-1}-x_p(0)^{-1}|}{\lambda_{\min}(\Sigma)}\\
&\le  \limsup\|C_p\|_{tr} \cdot\frac{|x(z)^{-1}-x(0)^{-1}|}{\lambda_{\min}(\Sigma)}\le C''' |z|.
\end{align*}
This holds almost surely for some fixed constant $C'''>0$, since $x(z)$ is analytic near the origin with $x(0)>0$.
\eenum

Combining these, we have 
$$\lim_{p\rightarrow+\infty}|\Delta_p|=\lim_{p\rightarrow+\infty}|\Delta_p^1+\Delta_p^2+\Delta_p^3+\Delta_p^4|\le(C'+C''+C''')|z|.$$
Since $|z|$ can be arbitrarily small, we conclude that, almost surely
$$\lim_{p\rightarrow+\infty}|\Delta_p|=\lim_{p\rightarrow+\infty}\tr[C_p(\hSigma^{-1} - (x_p(0)\Sigma)^{-1})]=0.$$
This finishes the proof. 

\section{Proof of Theorem \ref{calcrules}}
\label{pf:calcrules}

Recall that we defined $A_n \asymp B_n$ if $\lim_{n\to\infty}\left|\tr\left[E_n(A_n-B_n)\right]\right|=0$ a.s., for any standard sequence $E_n$ (of symmetric deterministic matrices with bounded trace norm). Below, $E_n$ will always denote such a sequence. 

\benum
\item 
The three required properties are that the $\asymp$ relation is reflexive, symmetric and transitive. The reflexivity and symmetry are obvious. To verify transitivity, we suppose $A_n \asymp B_n$ and $B_n \asymp C_n$. Then, for any standard sequence $E_n$, by the triangle inequality, 
 
$$\left|\tr\left[E_n(A_n-C_n)\right]\right| \le 
\left|\tr\left[E_n(A_n-B_n)\right]\right| + \left|\tr\left[E_n(B_n-C_n)\right]\right|.$$ 

Since the two sequences on the right hand side converge to zero almost surely, the conclusion follows. 

\item Let $D_n^1 = A_n-B_n$ and $D_n^2 = C_n-D_n$. Then we can bound by the triangle inequality

$$\left|\tr\left[E_n(D_n^1+D_n^2)\right]\right| \le 
\left|\tr\left[E_nD_n^1\right]\right| + \left|\tr\left[E_nD_n^2\right]\right|.$$ 

As before, the two sequences on the right hand side converge to zero almost surely, so the conclusion follows.

\item We need to show that $A_nB_n\asymp A_nC_n$. Let $E_n$ be any standard sequence. For this it is enough to show that $A_nE_n$ is still a standard sequence. However, this is clear, because
$$\lim\sup\|A_nE_n\|_{tr} 
\le \lim\sup\|A_n\|_{op}\|E_n\|_{tr}
\le \lim\sup\|A_n\|_{op}\lim\sup\|E_n\|_{tr} <\infty.$$ 

\item We know that $\lim_{n\to\infty}\left|\tr\left[E_n(A_n-B_n)\right]\right|=0$ a.s., for any standard sequence $E_n$. Consider $E_n = n^{-1} I_n$. Then $\|E_n\|_{tr} = 1$, so that $E_n$ is a standard sequence. Therefore,  $\lim_{n\to\infty}\left|\tr\left[A_n-B_n\right]\right|=0$ a.s., as desired.

\item This is a direct consequence of the trace property. 
\eenum

\section{Applications of the calculus}
\label{apps_calc}
In this section we briefly sketch several applications of the calculus of deterministic equivalents. We emphasize that in each case, there are other proof techniques, but they tend to be more case-by-case. The calculus provides a unified set of methods using which separate results can be seen as applications of the same approach.

{\bf Risk of ridge regression.} We can get a simpler derivation of certain previously found formulas for the risk of ridge regression \citep{dobriban2018high}. Considering Theorem 2.1 in that work, the finite-sample predictive risk of ridge regression involves $\tr(\Sigma(\hSigma+\lambda I)^{-1})$. Using the calculus of deterministic equivalents, we can compute this quantity using the following steps, starting with the main equivalence:
\begin{align*}
(\hSigma+\lambda I)^{-1} &\asymp (x_p\Sigma+\lambda I)^{-1}\\
\Sigma(\hSigma+\lambda I)^{-1} &\asymp \Sigma(x_p\Sigma+\lambda I)^{-1}\\
p^{-1}\tr[\Sigma(\hSigma+\lambda I)^{-1}] &- p^{-1}\tr[\Sigma(x_p\Sigma+\lambda I)^{-1}]\to_{a.s.}0.
\end{align*}
It remains to find the limit of the right hand side. This can be done by using the fixed point equation defining $x_p$. From the proof of Theorem \ref{gen_det}, we have that $x_p$ and the associated scalar $e_p$ are the unique solutions of the system 
\begin{align*}
e_p &= \frac{1}{p} \tr\left[\Sigma (x_p\Sigma+ \lambda I )^{-1}\right],\, 
x_p = (1+\gamma_p e_p)^{-1}.
\end{align*}
This gives a characterization of $x_p$ that cannot be simplified, and hence solves the problem to the extent that random matrix theory can, recovering the known results in a simpler way \citep{dobriban2018high}.

{\bf Fine-grained structure of ridge regression.} In a recent work \citep{dobriban2019ridge}, we have studied ridge regression on a deeper level, including presenting an equivalent for ridge as a sum of a covariance matrix dependent transform of the parameter vector, and another transform of the noise. In that work, we have relied significantly on the calculus of deterministic equivalents.

{\bf Distributed ridge regression.} In a follow-up work to the present one, we study one-shot distributed ridge regression \citep{dobriban2019one}. In that work, we rely heavily on the calculus of deterministic equivalents, and we think that this is a good example of results that would be hard or complicated to obtain otherwise. We refer the reader to \cite{dobriban2019one} for details.

{\bf Gradient flow for least squares.} To study gradient flow for least squares regression, \cite{alnur} also require the limiting behavior of $\tr(\Sigma(\hSigma+\lambda I)^{-1})$ (see the proof of their Theorem 6). Our calculus can thus be used as an alternative way to derive their results.

{\bf Interpolation.} The recent work of \cite{hastie2019surprises} has studied high-dimensional interpolation using techniques from random matrix theory. Some of their arguments can be phrased and simplified in the language of the calculus of deterministic equivalents. Consider for instance their Lemma 2, whose proof in version 3 of their arXiv preprint relies on the results of \cite{rubio2011spectral}. This proof can be simplified if expressed in the natural way in the calculus:  In equation 30, they want to find the limit
$$\lim_{z\to 0^+}z\beta^\top (\hSigma+zI)^{-1}\beta.$$
Using the calculus of deterministic equivalents, we know that for any fixed $z$, and any fixed $\beta$-sequence with bounded norm
\begin{align*}
z(\hSigma+z I)^{-1} &\asymp z(x_p\Sigma+z I)^{-1}\\
z\beta\beta^\top(\hSigma+z I)^{-1} &\asymp 
z\beta\beta^\top(x_p\Sigma+z I)^{-1}\\
z\beta^\top(\hSigma+z I)^{-1}\beta &- z\beta^\top(x_p\Sigma+z I)^{-1}\beta\to_{a.s.}0.
\end{align*}
These results are not uniform in $z$, but this could be proved with a bit more effort. Now, in their work $\Sigma=I_p$, hence
\begin{align*}
z\beta^\top(x_p\Sigma+z I)^{-1}\beta&=z/(x_p+z)\|\beta\|^2.
\end{align*}
Moreover, 
\begin{align*}
e_p &= 1/(x_p+z),\, 
x_p = 1/(1+\gamma_p e_p).
\end{align*}
After some elementary calculation, we find that the limit as $z\to 0^+$ when $\|\beta\|^2=r^2$ is $(1-1/\gamma)r^2$, which agrees with \cite{hastie2019surprises}.

{\bf Heteroskedastic PCA.} The calculus of deterministic equivalents can be used to simplify certain arguments used to study heteroskedastic PCA \citep{hong2018asymptotic,hong2018optimally}. Specifically, for Lemma 5 in \cite{hong2018optimally}, the key problem is to find the limit of $\zeta \tr[W R W]$, where $R = (\zeta^2 I - \tilde E^H \tilde E)^{-1}$ is the resolvent of a Marchenko-Pastur type matrix $\tilde E^H \tilde E$. The calculus of deterministic equivalents leads, using notation that has to be changed mutatis mutandis, and $\Sigma:=\E \tilde E^H \tilde E$, to
\begin{align*}
\zeta (\zeta^2 I -  \tilde E^H \tilde E)^{-1} &
\asymp \zeta(\zeta^2 I - x_p \Sigma)^{-1}\\
\zeta W (\zeta^2 I -  \tilde E^H \tilde E)^{-1}W  &
\asymp \zeta W (\zeta^2 I - x_p \Sigma)^{-1}W \\
n^{-1} \zeta  \tr W (\zeta^2 I -  \tilde E^H \tilde E)^{-1}W &  -
n^{-1} \zeta  \tr  W (\zeta^2 I - x_p \Sigma)^{-1}W \to_{a.s.}0.
\end{align*}
Then, using the specific expression of $\Sigma$, which is diagonal in this case, it is not hard to recover the statement of Lemma 5 from \cite{hong2018optimally}.

{\bf ePCA theory.} Another application of the calculus of deterministic equivalents is to develop a rigorous analysis for spiked covariance models in exponential families, which were  proposed in \cite{liu2018pca}. This is an ongoing project of one of the authors, and we refer to the forthcoming manuscript for details \citep{dobriban2019pca}.

\section{Elliptical models}
\label{estim_ellip_supp}

We study the setting of elliptical data. In this model the data samples may have different scalings, having the form $x_i = g_i^{1/2}\Sigma^{1/2} z_i$, for some vector $z_i$ with iid entries, and for datapoint-specific \emph{scale parameters} $g_i$. Arranging the data as the rows of the matrix $X$, that takes the form $$X = \Gamma^{1/2} Z \Sigma^{1/2},$$ where $Z$ and $\Gamma$ are as before: $Z$ has iid standardized entries, while $\Sigma$ is the covariance matrix of the features. Now $\Gamma$ is the diagonal \emph{scaling matrix} containing the scales $g_i$ of the samples. This model has a long history in multivariate statistical analysis \citep[e.g.,][]{mardia1979multivariate}. 


In the elliptical model, we find the following expression for the ARE.
\begin{theorem}[ARE for elliptical models]
\label{are_ell}
Consider the above high-dimensional asymptotic limit, where the data matrix is random, and the samples have the form $X = \Gamma^{1/2} Z\Sigma^{1/2}$. Suppose that, as $n_i\to\infty$ with $p/n_i\to\gamma_i\in(0,1)$, the $e.s.d.$ of $\Gamma$ converges weakly to $G$, the  $e.s.d.$ of each $\Gamma_i$ converges weakly to some $G_i$, and that the $e.s.d.$ of $\Sigma$ converges weakly to $H$. Suppose that $H$ is compactly supported away from the origin, $G$ is also compactly supported and does not have point mass at the origin.
 Then, the ARE has the form
\beqs
ARE =  f(\gamma, G) \cdot 
\sum_{i=1}^k  \frac{1} {f(\gamma_i, G_i)}.
\eeqs
\end{theorem}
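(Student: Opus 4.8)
The plan is to reduce the relative efficiency to the trace functionals $\tr[(X^\top X)^{-1}]$ and $\tr[(X_i^\top X_i)^{-1}]$ appearing in \eqref{opt_eff}, and to find the almost sure limit of each one separately using the deterministic equivalent of Theorem \ref{gen_det}. Writing $\hSigma = n^{-1}X^\top X$, we have $\tr[(X^\top X)^{-1}] = n^{-1}\tr[\hSigma^{-1}]$, and similarly for each machine with its own $n_i$ and $\hSigma_i = n_i^{-1}X_i^\top X_i$. Since all data matrices share the same feature covariance $\Sigma$, the strategy is to show that each trace factorizes asymptotically into a common $\Sigma$-dependent factor times a machine-specific scalar, so that the $\Sigma$-factor cancels in the ratio defining the efficiency; this cancellation is exactly the source of the claimed universality.

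First I would apply Theorem \ref{gen_det} to the full data to get $\hSigma^{-1}\asymp e_p\,\Sigma^{-1}$, and then invoke the trace rule of Theorem \ref{calcrules} with the standard sequence $p^{-1}I_p$ to conclude that $\frac{1}{p}\tr[\hSigma^{-1}] - e_p\cdot\frac{1}{p}\tr[\Sigma^{-1}] \to_{a.s.} 0$. Multiplying by $\gamma_p = p/n$ gives
\begin{align*}
\tr[(X^\top X)^{-1}] = \gamma_p\cdot\tfrac{1}{p}\tr[\hSigma^{-1}], \qquad \gamma_p\cdot\tfrac{1}{p}\tr[\hSigma^{-1}] - \gamma_p e_p\cdot\tfrac{1}{p}\tr[\Sigma^{-1}] \to_{a.s.} 0.
\end{align*}
The key algebraic step is then to identify $\gamma e = f(\gamma, G)$ in the limit. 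Starting from the limiting fixed-point equation $\int \frac{es}{1+\gamma es}\,dG(s) = 1$, I would rewrite $\frac{es}{1+\gamma es} = \gamma^{-1}\left(1 - \frac{1}{1+\gamma e s}\right)$ to obtain $\int \frac{1}{1+\gamma e s}\,dG(s) = 1-\gamma$; comparing with the definition $\eta_G(f) = \int\frac{1}{1+fs}\,dG(s) = 1-\gamma$ underlying \eqref{inv_eta} yields $f = \gamma e$. Hence $\tr[(X^\top X)^{-1}]$ has the same almost sure limit as $f(\gamma, G)\cdot \frac{1}{p}\tr[\Sigma^{-1}]$, and applying the identical argument on each machine gives that $\tr[(X_i^\top X_i)^{-1}]$ matches $f(\gamma_i, G_i)\cdot \frac{1}{p}\tr[\Sigma^{-1}]$, with the same common factor $\frac{1}{p}\tr[\Sigma^{-1}]$.

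Finally I would substitute these limits into \eqref{opt_eff}. Since $\frac{1}{p}\tr[\Sigma^{-1}]$ converges to $\int t^{-1}\,dH(t)$, which is bounded away from zero and infinity because $H$ is compactly supported away from the origin, it is a common nonzero factor in the numerator and in each denominator term, so it cancels, leaving $ARE = f(\gamma, G)\sum_{i=1}^k 1/f(\gamma_i, G_i)$. The main obstacles will be bookkeeping rather than conceptual: justifying the convergence $e_p \to e$ (and the analogous $e_{i,p}\to e_i$) needed to pass from the finite-$n$ fixed-point equations to their integral limits — which follows from the weak convergence of the e.s.d. of $\Gamma$ and each $\Gamma_i$ to $G$ and $G_i$ together with the continuity argument already invoked in the proof of Theorem \ref{gen_det} — and keeping the $p^{-1}$ normalization in the trace rule consistent across the full data and the $k$ machines so that the cancellation of $\frac{1}{p}\tr[\Sigma^{-1}]$ is exact in the ratio.
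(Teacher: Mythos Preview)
Your proposal is correct and reaches the same destination as the paper, but by a somewhat more streamlined route. The paper's own proof of Theorem~\ref{are_ell} does not invoke Theorem~\ref{gen_det} directly; instead it goes back to the Stieltjes transform characterization of the limiting spectral distribution of $\hSigma$ \citep{paul2009no}, and then spends most of its effort justifying (via the analytic-continuation machinery of \cite{couillet2014analysis}) that one may take $z\to 0$ in the fixed-point system so as to evaluate $m(0)$ and $e(0)$. It then arrives at exactly your identity $\tr[(X^\top X)^{-1}]\to \gamma e(\gamma,G)\cdot \E_H T^{-1}$, and the same cancellation of the $H$-dependent factor. Your approach short-circuits this by applying the deterministic equivalent $\hSigma^{-1}\asymp e_p\Sigma^{-1}$ of Theorem~\ref{gen_det} together with the trace rule of Theorem~\ref{calcrules}, which already package the delicate $z\to 0$ argument; this is in fact precisely how the paper itself proves the analogous Theorem~\ref{ce_ell} for confidence intervals. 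The algebraic identification $f(\gamma,G)=\gamma e(\gamma,G)$ from the fixed-point equation is done identically in both. So your argument is correct, and arguably more in the spirit of the calculus the paper develops; the paper's version buys a more self-contained derivation that makes the role of the Stieltjes transform and the support conditions on $G,H$ explicit.
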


See the following sections (Section \ref{are_ell}) for the proof.

There are two implicit relations in the above formula. First, $\sum 1/\gamma_i = 1/\gamma$, because $\sum n_i/p = n/p$. Second, $n \cdot G = \sum_{i=1}^k n_i \cdot G_i$, or equivalently $G/\gamma = \sum_{i=1}^k G_i/\gamma_i$, because $\Gamma$ contains all entries of each $\Gamma_i$.

For the special case when all aspect ratios $\gamma_i$ are equal, and all scale distributions $G_i$ are equal to $G$, we can say more about the ARE. We have the following theorem.

\begin{theorem}[Properties of ARE for elliptical models]
\label{are_ell_props}
Consider the behavior of distributed regression in elliptical models under the conditions of Theorem \ref{are_ell}. Suppose that the data sizes $n_i$ on all machines are equal, so that $\gamma_i = \gamma_j=k\gamma$ for all $i,j$. Suppose moreover that the scale distributions $G_i$ on all machines are also equal. Then, the ARE has the following properties 
\begin{enumerate}
\item It can be expressed equivalently as
$$
ARE(k)
=\frac{k\cdot\eta_{G}^{-1}(1-\gamma)}{\eta_{G}^{-1}(1-k\gamma)}
=\frac{k\cdot f(\gamma,G)}{f(k\gamma,G)}
 = \frac{e(\gamma, G)}{e(k\gamma, G)}.
$$
Here $\eta_G$ is the $\eta$-transform of $G$, $f$ is defined above, while $e$ is the unique positive solution of the equation
\begin{align*}
\int\frac{se}{1+\gamma se}dG(s)=1.
\end{align*}
\item
Suppose also that $G$ does not have a point mass at the origin. Then, the ARE is a strictly decreasing smooth convex function for $k\in[1,1/\gamma]$. Here $k$ is viewed as a continuous variable. Moreover $ARE(1)=1$, and 
$$\lim_{k\rightarrow 1/\gamma}ARE(k)=0.$$
\end{enumerate}
\end{theorem}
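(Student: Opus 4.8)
I would first establish the single identity $f(\gamma',G)=\gamma'\,e(\gamma',G)$ linking the inverse $\eta$-transform to the fixed-point quantity $e$, since everything in Part 1 follows from it by algebra. By definition $f=\eta_G^{-1}(1-\gamma')$ means $\int (1+fs)^{-1}\,dG(s)=1-\gamma'$, which upon subtracting from $1$ rearranges to $\int \frac{fs}{1+fs}\,dG(s)=\gamma'$. Substituting $f=\gamma' e$ and dividing by $\gamma'$ gives exactly the defining equation $\int \frac{se}{1+\gamma' se}\,dG(s)=1$; since $e\mapsto\int \frac{se}{1+\gamma' se}\,dG$ is strictly increasing with supremum $1/\gamma'>1$, this solution is unique, so $f=\gamma'e$ holds. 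The first equality is then immediate from $f(\gamma',G)=\eta_G^{-1}(1-\gamma')$, and the third follows by writing $\frac{k\,f(\gamma,G)}{f(k\gamma,G)}=\frac{k\,\gamma e(\gamma,G)}{k\gamma\,e(k\gamma,G)}=\frac{e(\gamma,G)}{e(k\gamma,G)}$. Combined with $ARE=f(\gamma,G)\sum_i f(\gamma_i,G_i)^{-1}$ from Theorem~\ref{are_ell}, specialized to $\gamma_i=k\gamma$ and $G_i=G$, this proves Part 1.

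\textbf{Part 2, easy claims.} Writing $ARE(k)=e(\gamma,G)/e(k\gamma,G)$ with $e(\gamma,G)$ a positive constant, every claim reduces to a property of $\phi(\gamma'):=e(\gamma',G)$. Smoothness and strict monotonicity come from the implicit function theorem applied to $F(e,\gamma')=\int \frac{se}{1+\gamma' se}\,dG-1$: the denominators are $\ge 1$, so the integrals are smooth, and $F_e=\int \frac{s}{(1+\gamma' se)^2}\,dG>0$, whence $\phi$ is $C^\infty$ with $\phi'(\gamma')=\big(\int \frac{s^2e^2}{(1+\gamma' se)^2}\,dG\big)\big/\big(\int \frac{s}{(1+\gamma' se)^2}\,dG\big)>0$; thus $ARE$ is smooth and strictly decreasing. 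The value $ARE(1)=1$ is immediate. For the edge, I would note that $e\mapsto\int \frac{se}{1+\gamma' se}\,dG$ increases to the supremum $G(\{s>0\})/\gamma'=1/\gamma'$ (using that $G$ has no atom at $0$), so as $\gamma'\uparrow 1$ the supremum tends to $1$ and the solution of $F=0$ is forced to satisfy $\phi(\gamma')\to\infty$; hence $ARE(k)\to 0$ as $k\to 1/\gamma$.

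\textbf{Convexity (the main obstacle).} Since $k\mapsto k\gamma$ is affine, it suffices to show $v(\gamma'):=1/e(\gamma',G)$ is convex, where $v$ solves the cleaner equation $\int \frac{s}{v+\gamma' s}\,dG(s)=1$. I would parametrize the curve by $x=f=\gamma' e$, under which $\gamma'=1-\eta_G(x)$ and $v=(1-\eta_G(x))/x$, and compute $d^2v/d\gamma'^2$ from these parametric formulas using $\eta_G(x)=\int(1+xs)^{-1}\,dG$. After simplification the sign of the second derivative reduces to the inequality $m_1 p_2\ge n_1 m_2$, where $m_1=\int \frac{s}{(1+xs)^2}\,dG$, $p_2=\int \frac{s^2}{(1+xs)^2}\,dG$, $n_1=\int \frac{s}{1+xs}\,dG$, $m_2=\int \frac{s^2}{(1+xs)^3}\,dG$. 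The key realization is that, introducing the positive measure $d\rho=\frac{s}{(1+xs)^2}\,dG$, one has $m_1 p_2-n_1 m_2=x\big(\int 1\,d\rho\cdot\int \frac{s^2}{1+xs}\,d\rho-\int s\,d\rho\cdot\int \frac{s}{1+xs}\,d\rho\big)$, i.e.\ a positive multiple of $\mathrm{Cov}_{\bar\rho}\!\big(s,\frac{s}{1+xs}\big)$ under the normalized probability measure $\bar\rho=\rho/\rho(\R)$. Both $s\mapsto s$ and $s\mapsto \frac{s}{1+xs}$ are nondecreasing on the support of $G$, so the correlation (Chebyshev/FKG) inequality for monotone functions yields a nonnegative covariance, giving $v''\ge 0$ and hence convexity of $ARE$.

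I expect the bookkeeping in the convexity step to be the real difficulty: reducing $d^2v/d\gamma'^2$ to $m_1 p_2-n_1 m_2$ and then recognizing the covariance structure takes careful algebra. A useful consistency check along the way is the Marchenko--Pastur case $G=\delta_1$, where $v(\gamma')=1-\gamma'$ is affine and every term above degenerates so that $m_1 p_2-n_1 m_2=0$, matching the exactly linear $ARE(k)=(1-k\gamma)/(1-\gamma)$.
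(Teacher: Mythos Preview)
Your proof is correct, and Part 1 together with the monotonicity, smoothness, and boundary claims match the paper's approach essentially verbatim (the paper also works with $\psi(k)=1/e(k\gamma,G)$ satisfying $\int t/(\psi+k\gamma t)\,dG=1$).

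The genuine difference is in the convexity argument. You reparametrize by $x=f=\gamma' e$, compute $d^2v/d\gamma'^2$ via the chain rule, and reduce its sign to the inequality $m_1p_2\ge n_1m_2$, which you then recognize (after a further identity) as a positive multiple of a Chebyshev-type covariance of two monotone functions. This works, but the bookkeeping is delicate, as you yourself note. The paper's route is considerably shorter: it simply differentiates the implicit equation $\int t/(\psi+k\gamma t)\,dG=1$ twice in $k$. The first differentiation gives
\[
\psi'(k)=-\frac{\displaystyle\int\frac{\gamma t^2}{(\psi+k\gamma t)^2}\,dG}{\displaystyle\int\frac{t}{(\psi+k\gamma t)^2}\,dG}\le 0,
\]
and the second differentiation, without substituting anything for $\psi'$, leaves a \emph{perfect square} in the numerator:
\[
\psi''(k)=\frac{\displaystyle\int\frac{2t(\psi'+\gamma t)^2}{(\psi+k\gamma t)^3}\,dG}{\displaystyle\int\frac{t}{(\psi+k\gamma t)^2}\,dG}\ge 0.
\]
No reparametrization, no covariance inequality, no algebraic identities among the integrals are needed. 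Your Chebyshev argument has the merit of being a recognizable structural reason (positive correlation of comonotone functions), but the paper's observation that implicit differentiation produces $(\psi'+\gamma t)^2$ is both simpler and more robust.
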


See Section \ref{pf:are_ell_props} for the proof. These theoretical formulas again match simulation results well, see Figure \ref{f2}. On that figure, we use the same simulation setup as for Figure \ref{f1}, and in addition we choose the scale distribution to be uniform on $[0,1]$.


\begin{figure}
\begin{subfigure}{.45\textwidth}
  \centering
\includegraphics[scale=0.48]
{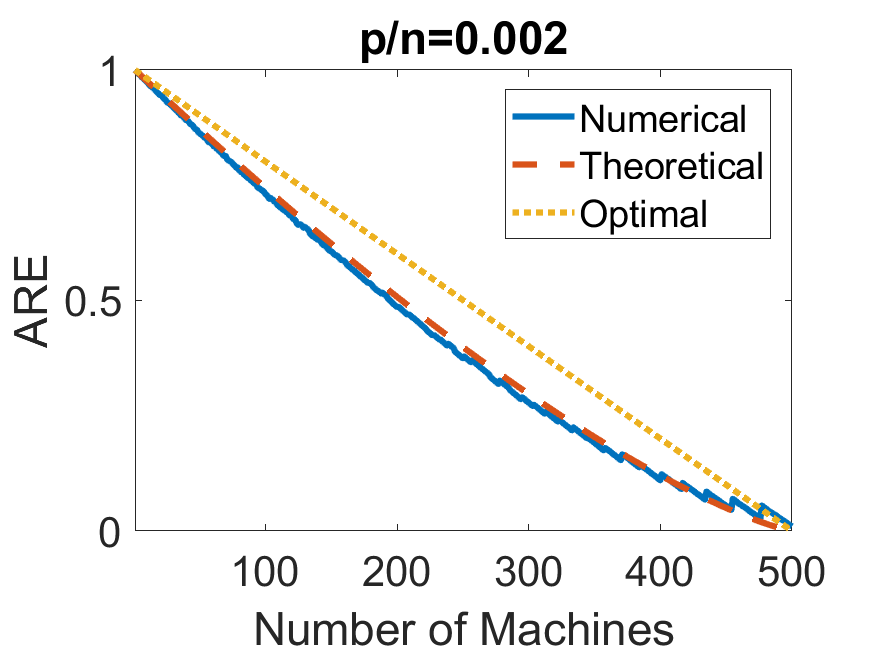}
\end{subfigure}
\begin{subfigure}{.45\textwidth}
  \centering
\includegraphics[scale=0.48]
{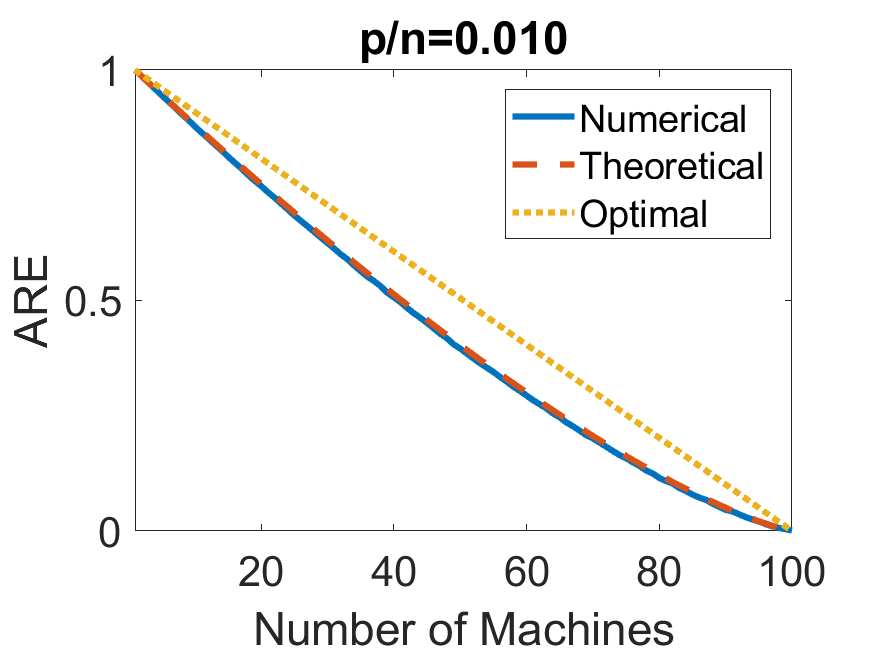}
\end{subfigure}
\caption{Comparison of empirical and theoretical ARE for elliptical distributions. Left: $n=10,000$, $p=20$. Right: $n=10,000$, $p=100$. }
\label{f2}
\end{figure}

The ARE for a constant scale distribution is a straight line in $k$, ARE$(k)=(1-k\gamma)/(1-\gamma)$. For a general scale distribution, the graph of ARE is a curve below that straight line. The interpretation is that for elliptical distributions, there is a larger efficiency loss in one-step averaging. Intuitively, the problem becomes "more non-orthogonal" due to the additional variability from sample to sample. 

\subsection{Worst-case analysis}
\label{worst_case}
It is natural to ask which elliptical distributions are difficult for distributed estimation. That is, for what scale distributions $G$ does the distributed setting have a strong effect on the learning accuracy? Intuitively, if some of the scales are much larger than others, then they "dominate" the problem, and may effectively reduce the sample size. 

Here we show that this intuition is correct. We find a sequence of scale distributions $G_\tau$ such that distributed estimation is "arbitrarily bad", so that the ARE decreases very rapidly, and approaches zero even for two machines. 

\begin{proposition}[Worst-case example]
\label{ex1}
Consider elliptical models with scale distributions that are a mixture of small variances $\tau$, and larger variances $1/\gamma$, with weights $1-\gamma$ and $\gamma$, i.e., $G_\tau=(1-\gamma)\delta_\tau+\gamma\delta_{1/\gamma}$. Then, as $\tau\to0$, we have ARE$(2)=O(\tau^{1/2})\rightarrow 0$. Therefore, the relative efficiency for any $k\ge 2$ tends to zero.
\end{proposition}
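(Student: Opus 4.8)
The plan is to reduce the statement to the closed form for the equal-machine relative efficiency from Theorem \ref{are_ell_props}. For $k=2$ both machines carry the same scale law $G_\tau$ and, by symmetry, equal sample sizes, so that theorem applies and gives
$$ARE(2) = \frac{2\, f(\gamma, G_\tau)}{f(2\gamma, G_\tau)}, \qquad f(\gamma, G) = \eta_{G}^{-1}(1-\gamma).$$
Thus the whole problem reduces to the asymptotics, as $\tau\to 0$, of two inverse $\eta$-transform values. Writing out the $\eta$-transform of the two-point mixture,
$$\eta_{G_\tau}(x) = (1-\gamma)\frac{1}{1+\tau x} + \gamma\,\frac{1}{1+x/\gamma},$$
I would define $x_1 := f(\gamma, G_\tau)$ and $x_2 := f(2\gamma, G_\tau)$ as the unique positive roots of $\eta_{G_\tau}(x) = 1-\gamma$ and $\eta_{G_\tau}(x) = 1-2\gamma$, respectively (uniqueness and positivity follow from $\eta_{G_\tau}$ being strictly decreasing with $\eta_{G_\tau}(0)=1$ and $\eta_{G_\tau}(\infty)=0$, both targets lying in $(0,1)$ since $\gamma<1/2$).

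For $x_1$, I would rearrange $\eta_{G_\tau}(x_1)=1-\gamma$ into $\gamma/(1+x_1/\gamma) = (1-\gamma)\,\tau x_1/(1+\tau x_1)$ and posit the ansatz $x_1\to\infty$ with $\tau x_1\to 0$. Under it the left side is $\sim \gamma^2/x_1$ and the right side is $\sim (1-\gamma)\tau x_1$, forcing $x_1 \sim \gamma\,[(1-\gamma)\tau]^{-1/2}$; one then checks $\tau x_1 = \gamma\sqrt{\tau/(1-\gamma)}\to 0$, so the ansatz is self-consistent. For $x_2$, rearranging $\eta_{G_\tau}(x_2)=1-2\gamma$ gives $-(1-\gamma)\,\tau x_2/(1+\tau x_2) + \gamma/(1+x_2/\gamma) = -\gamma$; positing $x_2\to\infty$ with $\tau x_2\to u>0$ kills the second term and collapses the equation to $(1-\gamma)\,u/(1+u) = \gamma$, i.e. $u = \gamma/(1-2\gamma)$. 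Hence $x_2 \sim \gamma\,[(1-2\gamma)\tau]^{-1}$, which diverges on a strictly faster scale than $x_1$.

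Combining the two scalings yields
$$ARE(2) = \frac{2 x_1}{x_2} \sim \frac{2\gamma\,[(1-\gamma)\tau]^{-1/2}}{\gamma\,[(1-2\gamma)\tau]^{-1}} = \frac{2(1-2\gamma)}{\sqrt{1-\gamma}}\,\tau^{1/2} = O(\tau^{1/2})\to 0.$$
Finally, since $G_\tau$ has no point mass at the origin for $\tau>0$, Theorem \ref{are_ell_props}(2) guarantees that $ARE(k)$ is strictly decreasing in $k$ on $[1,1/\gamma]$; therefore $0 \le ARE(k) \le ARE(2)$ for every integer $k$ with $2\le k<1/\gamma$, and $ARE(2)\to 0$ forces $ARE(k)\to 0$ for all such $k\ge 2$. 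The main obstacle is making the two fixed-point asymptotics rigorous rather than heuristic: one must establish the correct divergence rates of $x_1$ and $x_2$ (in particular that $\tau x_1\to 0$ while $\tau x_2$ tends to a strictly positive constant) and control the neglected lower-order terms so that the leading-order ratio is exact in the limit. Each is a one-variable monotonicity-and-continuity argument, but pinning down that the two roots live on genuinely different scales ($\tau^{-1/2}$ versus $\tau^{-1}$) is the delicate point that produces the $\tau^{1/2}$ rate.
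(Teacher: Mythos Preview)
Your proposal is correct and follows essentially the same route as the paper: both start from $ARE(2)=2x_1/x_2$ with $x_j=\eta_{G_\tau}^{-1}(1-j\gamma)$, write out the two-point $\eta$-transform, show $x_1$ is of order $\tau^{-1/2}$ while $x_2$ is of order $\tau^{-1}$, and then invoke monotonicity of $ARE(k)$ in $k$ to extend to all $k\ge 2$.

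The one methodological difference is in how the asymptotics of $x_1,x_2$ are extracted. You use a dominant-balance ansatz (positing $\tau x_1\to 0$ versus $\tau x_2\to u>0$) and then check self-consistency. The paper instead observes that each equation $\eta_{G_\tau}(x)=c$ is \emph{exactly quadratic} in $x$, writes down the positive root via the quadratic formula, and reads off the $\tau$-asymptotics directly from the closed form (rationalizing the numerator for $x_1$). This sidesteps precisely the ``delicate point'' you flag at the end: with an explicit formula in hand there is nothing heuristic to justify, and the scales $\tau^{-1/2}$ and $\tau^{-1}$ fall out by inspection. Your ansatz argument gives the same leading constants ($x_1\sim\gamma[(1-\gamma)\tau]^{-1/2}$, $x_2\sim\gamma[(1-2\gamma)\tau]^{-1}$) and would generalize better to non-quadratic fixed-point equations, but in this particular two-atom case the paper's explicit solution is both shorter and fully rigorous without further work.
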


See Section \ref{pf:ex1} for the proof. 

Next, we consider more general scale distributions that are a mixture of small scales $\tau$, and larger scales $\alpha\tau$, with arbitrary weights $1-c$ and $c$: 

\beq
G=(1-c)\delta_\tau+c\delta_{\alpha\tau},
\label{gen_mix}
\eeq 
where $c\in[0,1], \tau>0$, and $\alpha>1$. To gain some intuition about the setting where $\alpha\to\infty$, we notice that only the large scales contribute a non-negligible amount to the sample covariance matrix $X^\top X$. Therefore, the sample size is reduced by a factor equal to the fraction of large scales, which equals $c$. More specifically, we have
$$n^{-1}X^\top X = n^{-1}\sum_{j=1}^n x_j x_j^\top 
= \alpha^2 n^{-1}\sum_{j\le cn} z_j z_j^\top +n^{-1}\sum_{j>cn} z_j z_j^\top
\approx \alpha^2 n^{-1}\sum_{j\le cn} z_j z_j^\top.
$$
The last approximation follows because the sample covariance matrix has $p$ eigenvalues, out of which we expect $\min(cn,p)$ to be large, of the order of $\alpha^2\gg1$. The remaining $\max(p-cn,0)$ are smaller, of unit order. Thus, heuristically, this matrix is well approximated by a scaled sample covariance matrix of $cn$ vectors. Therefore, the sample size is reduced to the number of large scales. If $cn<p$, the matrix is nearly singular, while if $cn>p$, it is well-conditioned. This should provide some intuition for the results to follow. 

\begin{theorem}[More general worst-case example]
\label{ex2}
Consider elliptical models with scale distribution $$G=(1-c)\delta_\tau+c\delta_{\alpha\tau},$$ as in \eqref{gen_mix}, where $c\in[0,1]$, and $\tau>0$. When $\alpha$ tends to infinity, the ARE will depend on $c$ and $\gamma$ as summarized in Table \ref{worsetab}. 
\begin{table}[]
\renewcommand{\arraystretch}{2}
\centering
\caption{ARE as a function of number of machines $k\geq 1$ and the fraction of large scales $c=M\gamma$, as the ratio $\alpha$ tends to infinity. Note that for any fixed $M$, we need $k/M\ge 1/M$ for the result to be well-defined. We also need $k$ to be an integer. In the table below, we mark by a $\diagup$ the cases where this is not satisfied.}
\label{worsetab}
\begin{tabular}{|c|c|c|c|c|c|}
\hline
\diagbox{$\frac{k}{M}$}{ARE($k$)}{$M$}	&$0<M<1$&$M=1$&$1<M<2$&$M=2$&$M>2$\\
\hline
$\frac{k}{M}<1$&$\diagup$&$\diagup$&$1$&$1$&$\frac{c-k\gamma}{c-\gamma}$\\
\hline
$\frac{k}{M}=1$&$\diagup$&$1$&$\diagup$&$O(\alpha^{-1/2})$&$O(\alpha^{-1/2})$\\
\hline
$\frac{k}{M}>1$&$\frac{k(\gamma-c)(1-k\gamma)}{(1-\gamma)(k\gamma-c)}$&$O(\alpha^{-1/2})$&$O(\alpha^{-1})$&$O(\alpha^{-1})$&$O(\alpha^{-1})$\\
\hline
\end{tabular}
\end{table}
\end{theorem}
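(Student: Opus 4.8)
The plan is to reduce everything to the balanced-split efficiency formula of Theorem \ref{are_ell_props} and then to perform an asymptotic analysis of the inverse $\eta$-transform as $\alpha\to\infty$. Since a balanced random split gives each machine (asymptotically) the same fraction $c$ of large scales, we are in the equal-scale-distribution case $G_i=G$, $\gamma_i=k\gamma$, so
$$ARE(k)=\frac{k\,f(\gamma,G)}{f(k\gamma,G)},\qquad f(\gamma',G)=\eta_G^{-1}(1-\gamma'),$$
and for the two-point mixture $G=(1-c)\delta_\tau+c\delta_{\alpha\tau}$ the $\eta$-transform is
$$\eta_G(x)=\frac{1-c}{1+x\tau}+\frac{c}{1+x\alpha\tau}.$$
The whole problem therefore reduces to locating the root $x$ of $\eta_G(x)=1-\gamma'$ for the two values $\gamma'=\gamma$ and $\gamma'=k\gamma$, as $\alpha\to\infty$, since these roots are exactly $f(\gamma,G)$ and $f(k\gamma,G)$. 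Because the ARE is scale invariant, $\tau$ cancels in every ratio, so only the $\alpha$-scaling of the roots matters.

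The key step is a three-regime analysis of that root, governed by comparing $\gamma'$ with $c$ (equivalently, by the heuristic preceding the theorem, by whether a block contains more or fewer large-scale points than dimensions). First, when $\gamma'>c$ the second term of $\eta_G$ is negligible for $x=O(1)$, so the root solves $(1-c)/(1+x\tau)=1-\gamma'$ to leading order, giving the order-one limit $f(\gamma',G)\to \tau^{-1}(\gamma'-c)/(1-\gamma')$. Second, when $\gamma'<c$ the root lies at scale $x=O(1/\alpha)$; setting $v=\alpha x\tau$ and sending $\alpha\to\infty$ collapses $\eta_G$ to $(1-c)+c/(1+v)$, whence $v=\gamma'/(c-\gamma')$ and $f(\gamma',G)\sim(\alpha\tau)^{-1}\gamma'/(c-\gamma')$, of order $1/\alpha$. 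Third, in the critical case $\gamma'=c$ neither term alone suffices; writing $x\tau=s\,\alpha^{-1/2}$ and balancing the two leading contributions in the fixed-point equation yields $s^2=c/(1-c)$, so $f(\gamma',G)=\Theta(\alpha^{-1/2})$.

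Finally I would assemble the table cell by cell. With $M=c/\gamma$, the global root $f(\gamma,G)$ is of order $1$, $1/\alpha$, or $1/\sqrt\alpha$ according to whether $M<1$, $M>1$, or $M=1$; the local root $f(k\gamma,G)$ is of order $1$, $1/\alpha$, or $1/\sqrt\alpha$ according to whether $k/M>1$, $k/M<1$, or $k/M=1$. Substituting the leading constants into $ARE(k)=k\,f(\gamma,G)/f(k\gamma,G)$ and letting the $\tau$'s (and common $\alpha$ powers) cancel produces each entry: the two order-matched cells give the exact rational limits $\tfrac{k(\gamma-c)(1-k\gamma)}{(1-\gamma)(k\gamma-c)}$ (for $M<1,\ k/M>1$) and $\tfrac{c-k\gamma}{c-\gamma}$ (for $M>1,\ k/M<1$), while every mismatched pairing contributes the stated power $O(\alpha^{-1})$ or $O(\alpha^{-1/2})$. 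Two bookkeeping points remain: the markers $\diagup$ simply record that no integer $k\ge1$ satisfies the relevant inequality (for instance $k/M<1$ is impossible once $M\le1$), and for $1<M\le2$ the only admissible $k$ in the row $k/M<1$ is $k=1$, where $\tfrac{c-k\gamma}{c-\gamma}=1$, explaining the entry $1$ there.

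The main obstacle is the critical-scale analysis ($M=1$ and $k/M=1$): here the relevant root sits at the intermediate scale $\alpha^{-1/2}$, so neither dominated-term approximation applies, and one must carry out a genuine matched two-term balance in the fixed-point equation to extract the $\alpha^{-1/2}$ rate; the order-one and order-$1/\alpha$ regimes are by comparison routine dominated-convergence arguments. I would also note that the limit here is a double limit—Theorem \ref{are_ell} gives the exact almost sure ARE for each fixed $\alpha$ in the high-dimensional limit $n\to\infty$, after which we analyze the resulting deterministic function of $\alpha$—so no further probabilistic argument is needed beyond invoking that theorem.
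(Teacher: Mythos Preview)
Your proposal is correct and lands on the same answer, but the route differs from the paper's in a useful way. The paper works concretely: it writes $\eta_G(x)=1-\gamma'$ as an explicit quadratic in $x$, applies the quadratic formula to obtain closed-form expressions for $x_1=f(\gamma,G)$ and $x_k=f(k\gamma,G)$, and then reads off the $\alpha\to\infty$ behavior of the ratio $kx_1/x_k$ case by case (five sub-cases indexed by where $c$ sits relative to $\gamma$ and $2\gamma$). You instead perform a direct dominant-balance analysis of the fixed-point equation, identifying the three scaling regimes $x=\Theta(1)$, $x=\Theta(\alpha^{-1})$, $x=\Theta(\alpha^{-1/2})$ according to the sign of $\gamma'-c$, and then assemble the table by pairing the regime of the numerator with that of the denominator. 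Your method is cleaner and more conceptual (and would extend to mixtures with more atoms, where no quadratic formula is available); the paper's explicit computation has the virtue of leaving no room for error in the matched-asymptotics step at the critical scale $\gamma'=c$, which you correctly flag as the delicate case. Either way the $\tau$-independence is immediate, and both arguments rely on Theorem~\ref{are_ell_props} only through the balanced-split formula $ARE(k)=k\,f(\gamma,G)/f(k\gamma,G)$.
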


See Section \ref{pf:ex2} for the proof. 

As an example of special interest, if $c=M\gamma$ for some $M>2$, then
$$
\lim_{\alpha\rightarrow+\infty}ARE(k)
=
\begin{cases}
\frac{c-k\gamma}{c-\gamma},~~k<M,\\
O(\alpha^{-1/2}),~~k=M,\\
O(\alpha^{-1}),~~k>M.
\end{cases}
$$
As before, this result can be understood in terms of reducing the effective sample size to $cn$. When $k<(cn)/p$, i.e., $k<M$, each local problem can be well-conditioned. However, when $k>(cn)/p$, i.e., $k>M$, all local OLS problems are ill-conditioned, so the ARE is small.

\newcommand{\FW}{0.4\textwidth}
\newcommand{\TRA}{0}
\newcommand{\TRB}{0}
\newcommand{\TRC}{0}
\newcommand{\TRD}{0}
\newcommand{\PBW}{0.3}

\begin{figure}[h]
\centering
\begin{tabular}{cc}
\includegraphics[width=\FW, trim = \TRA mm \TRB mm \TRC mm \TRD mm, clip = TRUE]{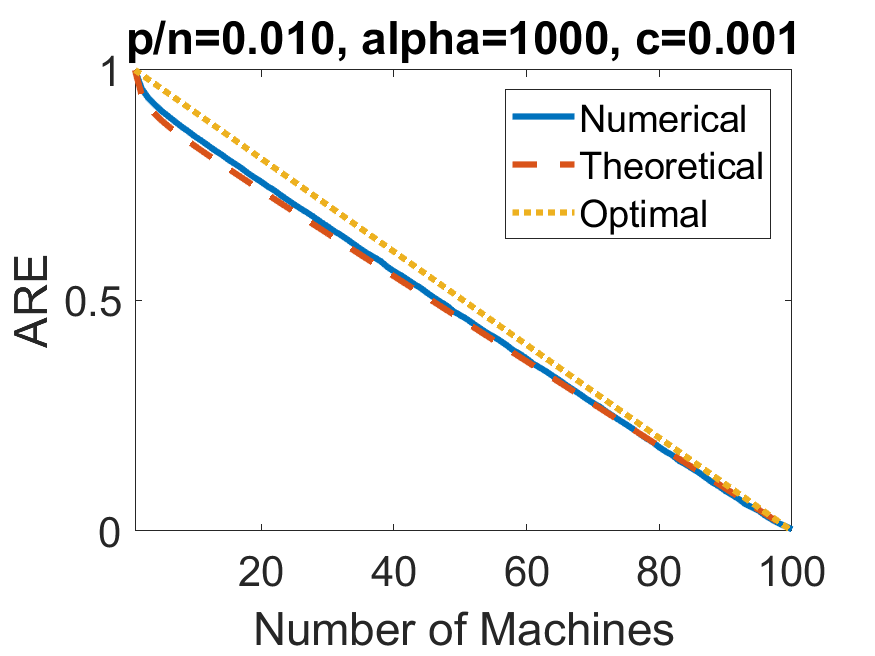} &
\includegraphics[width=\FW, trim = \TRA mm \TRB mm \TRC mm \TRD mm, clip = TRUE]{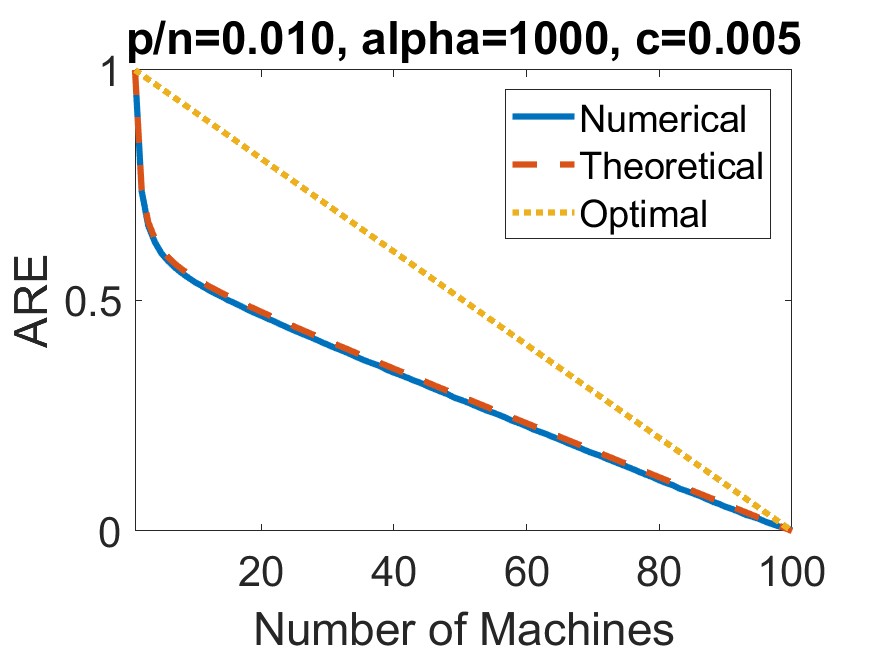} \\
\includegraphics[width=\FW, trim = \TRA mm \TRB mm \TRC mm \TRD mm, clip = TRUE]{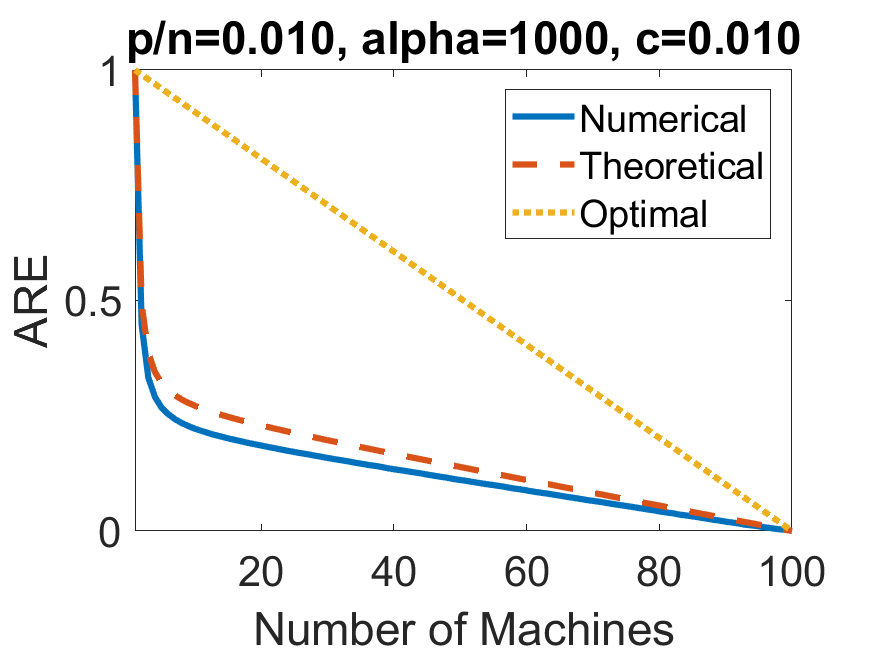} &
\includegraphics[width=\FW, trim = \TRA mm \TRB mm \TRC mm \TRD mm, clip = TRUE]{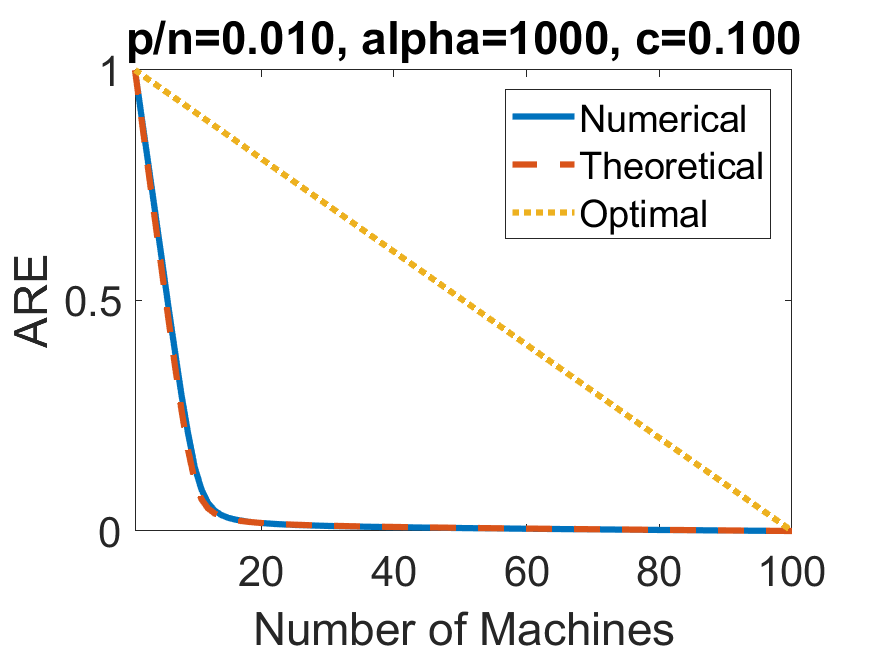}  \\
\includegraphics[width=\FW, trim = \TRA mm \TRB mm \TRC mm \TRD mm, clip = TRUE]{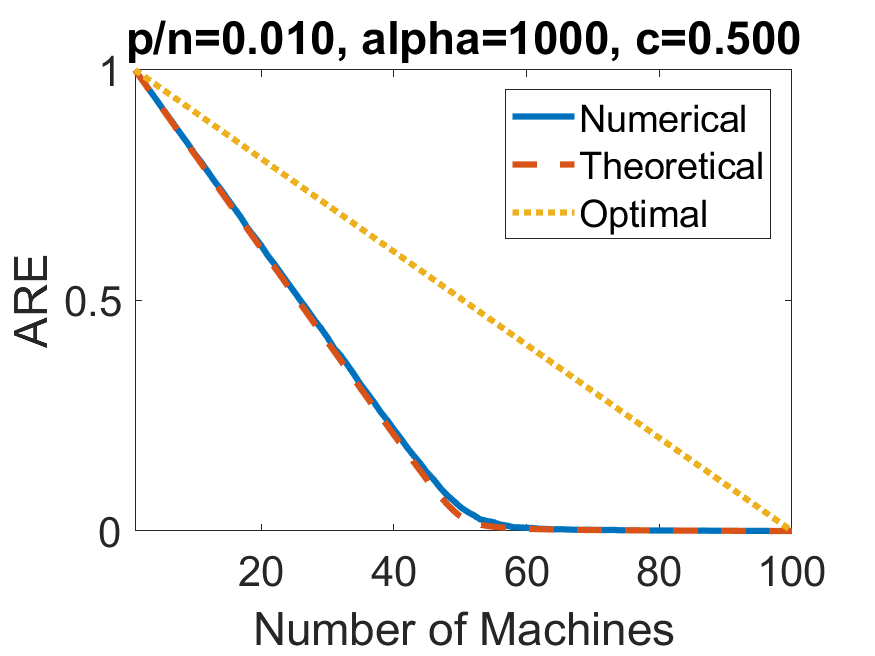} &
\includegraphics[width=\FW, trim = \TRA mm \TRB mm \TRC mm \TRD mm, clip = TRUE]{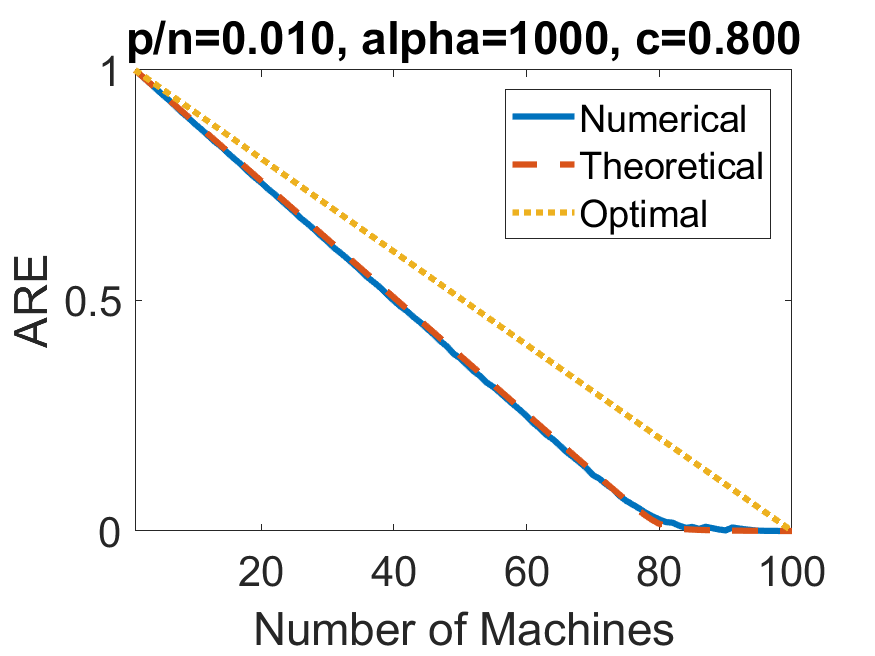}  \\
\end{tabular}
\caption{Comparison of empirical and theoretical ARE for worst case  elliptical distributions. We fix $n=10,000$, $p=100$, so that $\gamma = 0.01$. We also fix $\alpha = 1000$. We vary $c$. }
\label{f3}
\end{figure}

\subsection{Proof of Theorem \ref{are_ell}}
\label{pf:are_ell}


Consider the matrix
\begin{align*}
\hSigma =\frac{1}{n}X^\top X&=\frac{1}{n}\Sigma^{\frac{1}{2}}Z^\top \Gamma Z\Sigma^{\frac{1}{2}}.
\end{align*}

Recall that the $e.s.d.$ of $\Gamma$ converges to $G$, and that the $e.s.d.$ of $\Sigma$ converges to $H$. According to \cite{paul2009no}, with probability 1, the $e.s.d.$ of $\hSigma$ converges to a distribution $F$, whose Stieltjes transform $m(z),z\in\mathbb{C}^+$ is given by 
\begin{align*}
m(z)=\int\frac{1}{t\int\frac{s}{1+\gamma se}dG(s)-z}dH(t),
\end{align*}
where $e=e(z)$ is the unique solution in $\mathbb{C}^+$ of the equation
$$
e=\int\frac{t}{t\int\frac{s}{1+\gamma se}dG(s)-z}dH(t).
$$
Since $\tr[(X^\top X)^{-1}]\rightarrow \gamma m(0)$, we only need to solve for $m(0)$. As we will show below, we can take $z\to 0$, and obtain that
\begin{align*}
m(0)=\frac{\int\frac{1}{t}dH(t)}{\int\frac{s}{1+\gamma se}dG(s)},
\quad e(0)=\frac{1}{\int\frac{s}{1+\gamma se}dG(s)},
\end{align*}
hence $e:=e(0)$ can be checked to be the unique positive solution to the equation
\begin{align*}
\int\frac{se}{1+\gamma se}dG(s)=1.
\end{align*}
To make this rigorous, we need to use some results from \cite{couillet2014analysis}. Let $\mu_F, \mu_G, \mu_H$ be the probability measures corresponding to the distributions $F, G, H$. Our goal is to show that, when $\mu_H$ is compactly supported away from the origin, $\mu_G$ is compactly supported and does not have a point mass at the origin, then $\mu_F$ is also compactly supported away from the origin and the solutions $m(z)$, $e(z)$ to the above equations can be extended to the origin.

First, for any $z\in\mathbb C^+$, \cite{couillet2014analysis} showed that the system of equations
$$
\delta=\int\frac{\gamma t}{-z(1+\tilde{\delta}t)}d\mu_H(t),\,\,
\tilde{\delta}=\int\frac{t}{-z(1+\delta t)}d\mu_G(t)
$$
admits a unique solution $(\delta,\tilde{\delta})\in(\mathbb C^+)^2$. Let $\delta(z)$ and $\tilde{\delta}(z)$ be these solutions. Notice that $\delta(z)$ and $e(z)$ have the following relation: $\delta(z)=\gamma e(z)$. Therefore, we can equivalently study $\delta(z)$ instead of $e(z)$. The function $m(z)$, which is the Stieltjes transform of $\mu_F$, can be expressed as:
$$
m(z)=\int\frac{1}{-z(1+\tilde{\delta}(z)t)}d\mu_H(t),~~z\in\mathbb C^+.
$$
We will use this expression later.\\

A important and useful proposition from \cite{couillet2014analysis} is that the functions $\delta(z),\tilde{\delta}(z)$ admit the representations
$$
\delta(z)=\int_0^\infty\frac{1}{t-z}d\rho(t),~~\tilde{\delta}(z)=\int_0^\infty\frac{1}{t-z}d\tilde{\rho}(t),
$$
where $\rho$ and $\tilde{\rho}$ are two Radon positive measures on $\mathbb R^+$ such that
$$
0<\int_0^\infty\frac{1}{1+t}d\rho(t)<\infty,~~0<\int_0^\infty\frac{1}{1+t}d\tilde{\rho}(t)<\infty.
$$
Thus, $\delta(z)$ and $\tilde{\delta}(z)$ can be analytically extended to $\mathbb C\setminus$supp$(\rho)$ and $\mathbb C\setminus$supp$(\tilde{\rho})$ respectively.\\

For the support of measures $\mu_F,\mu_G,\mu_H,\rho$ and $\tilde{\rho}$, we have the following relations from \cite{couillet2014analysis}:
\benum

\item 
$$
\mu_F(\{0\})=1-\min[1-\mu_H(\{0\}), \frac{1-\mu_G(\{0\})}{\gamma}].
$$
So under our assumption that each of $H,G$ have zero point mass at the origin, and that $\gamma<1$, we have $\mu_F(\{0\})=0$.

\item Let $\mathbb R^\ast=\mathbb R\setminus\{0\}$, then supp$(\rho)\cap\mathbb R^\ast=$ supp$(\tilde{\rho})\cap\mathbb R^\ast=$ supp$(\mu_F)\cap\mathbb R^\ast$.
 
\item Suppose $\inf($supp$(\mu_H)\cap\mathbb R^\ast)>0$, i.e. the support of $\mu_H$ is away from the origin, then $\inf($supp$(\mu_F)\cap\mathbb R^\ast)>0$, the support of $\mu_F$ is also away from the origin.

\item supp$(\mu_F)$ is compact if and only if supp$(\mu_G)$ and supp$(\mu_H)$ are both compact.
\item Under our assumption, $\tilde{\rho}(\{0\})=\lim_{y\downarrow 0}(-iy\tilde{\delta}(iy))>0$. Since $\tilde{\delta}(z)\rightarrow \infty$ as $z\rightarrow 0$ and $\mu_H$ is compactly supported away from the origin, by the dominated convergence theorem (DCT),
$$
\rho(\{0\})=\lim_{y\downarrow 0}(-iy\delta(iy))=\lim_{y\downarrow 0}\int\frac{\gamma t}{1+\tilde{\delta}(iy)t}d\mu_H(t)=0.
$$

\eenum

Given these, the picture is now clear. That is, under our assumption, supp$(\mu_F)=$ supp$(\rho)=K$, supp$(\tilde{\rho})=\{0\}\cup K$, where $K$ is some compact set on $\mathbb R^+$ away from the origin. Thus, $m(z)$ and $\delta(z)$ can be analytically extended to $\mathbb C\setminus K$. And $\tilde{\delta}(z)$ can be extended to a meromorphic function on $\mathbb C\setminus K$, with a simple pole at $z=0$.\\

Let us rewrite the system of equations as
$$
\delta(z)=\int\frac{\gamma t}{-z(1+\tilde{\delta}(z)t)}d\mu_H(t),\,\,
-z\tilde{\delta}(z)=\int\frac{t}{1+\delta(z)t}d\mu_G(t),
$$
where $z\in\mathbb C^+$. Now, using the integral representations of $\delta, \tilde \delta$ given above, 
$$
\delta(0)=\int_0^\infty\frac{1}{t}d\rho(t)>0,~~\lim_{z\rightarrow 0}z\tilde{\delta}(z)=-\tilde{\rho}(\{0\})<0,
$$ it is  easy to check that 
the right-hand sides of both equations above are analytic at least in a small neighborhood $U$ of the origin. By the uniqueness property of analytic functions, the above system of equations will hold for all $z\in U$. This means that we can evaluate the equation at $z=0$. For the equation
$$
m(z)=\int\frac{1}{-z(1+\tilde{\delta}(z)t)}d\mu_H(t),~~z\in\mathbb C^+,
$$
we find by a similar argument that we can also evaluate $m(z)$ at $z=0$. This finishes the proof for the expressions of $m(0), e(0)$ given at the beginning of the proof of the main theorem.\\

Moreover, the Stieltjes transform we are looking for has the form $m(0)=e(0) \cdot  \E_H T^{-1}$. Let us write $e(\gamma, G)$ for $e(0)$ showing the dependence on $\gamma,G$ explicitly. Then, 
$$\tr\left[(X^\top X)^{-1}\right]\rightarrow \gamma e(\gamma, G) \cdot  \E_H T^{-1}.$$

Similarly, since $X_i$ has the same elliptical form $X_i = \Gamma_i^{1/2} Z_i \Sigma^{1/2}$, and by assumption the e.s.d. of $\Gamma_i$ converges to $G_i$, we obtain that

$$\tr\left[(X_i^\top X_i)^{-1}\right]\rightarrow \gamma_ie(\gamma_i, G_i) \cdot  \E_H T^{-1}.$$
Thus, the ARE equals
$$ARE =  \gamma e(\gamma, G) \cdot  \E_H T^{-1} \cdot 
\sum_{i=1}^k  \frac{1} {\gamma_i e(\gamma_i, G_i) \cdot  \E_H T^{-1}}
= 
\gamma e(\gamma, G) \cdot 
\sum_{i=1}^k  \frac{1} {\gamma_i e(\gamma_i, G_i)}.$$
Notice now that $f(\gamma, G) = \gamma e(\gamma, G)$ so the ARE also equals
$f(\gamma, G) \cdot 
\sum_{i=1}^k  \frac{1} {f(\gamma_i, G_i)}.$
This finishes the proof. 

\subsection{Proof of Theorem \ref{are_ell_props}}
\label{pf:are_ell_props}

Under the assumptions of the theorem, we have:
$$
ARE(k)=\frac{kf(\gamma,G)}{f(k\gamma, G)}
=\frac{k\cdot\eta_{G}^{-1}(1-\gamma)}{\eta_{G}^{-1}(1-k\gamma)}
=\frac{e(\gamma, G)}{e(k\gamma, G)}.
$$
The second form given in the theorem follows directly from the definition of $e$.

Next, we assume $G$ does not have a point mass at the origin. From the definition of $\eta$-transform, we have the following observation. For any $G$, $\eta_G(x)$ is a smooth decreasing function on $[0,+\infty)$ with $\eta_G(0)=1$ and $\lim_{x\rightarrow+\infty}\eta_G(x)=0$. So $\eta_G^{-1}(x)$ defined on $(0,1]$ is also smooth and decreasing with $\eta_G^{-1}(1)=0$ and $\lim_{x\rightarrow0+}\eta_G^{-1}(x)=+\infty$. This means that $ARE(k)$ is indeed a well-defined function for $k\in[1,1/\gamma]$.


Next we show that ARE$(k)$ is a decreasing convex function.  Convexity is equivalent to saying that $1/e(k\gamma, G)$ is decreasing and convex in $k$.  Let $\psi(k)=1/e(k\gamma,G)$. Then $\psi(k)$ is the unique positive solution to the equation
$$
\int\frac{t}{\psi(k)+k\gamma t}dG(t)=1.
$$
We can differentiate with respect to $k$ on both sides to get
$$
\psi'(k)=-\frac{\int\frac{\gamma t^2}{(\psi+k\gamma t)^2}dG(t)}{\int\frac{t}{(\psi+k\gamma t)^2}dG(t)}\leq0.
$$
Similarly, we differentiate it twice to get
$$
\psi''(k)=\frac{\int\frac{2t(\psi'+\gamma t)^2}{(\psi+k\gamma t)^3}dG(t)}{\int\frac{t}{(\psi+k\gamma t)^2}dG(t)}\geq0.
$$
This proves that $\psi$ is decreasing and convex, and finishes the proof. 


\subsection{Proof of Proposition \ref{ex1}}
\label{pf:ex1}

Recall that $G_\tau=(1-\gamma)\delta_\tau+\gamma\delta_{1/\gamma}$. Since ARE$(k)$ is always decreasing, we only need to show that $\lim_{\tau\rightarrow0}$ARE$(2)=0$. Now, 
$$
ARE(2)
=\frac{2\cdot\eta_{G}^{-1}(1-\gamma)}{\eta_{G}^{-1}(1-2\gamma)}.
$$
Recall also that $\eta_G(x) = \E_G 1/(1+xT)$, and so for $G_\tau=(1-\gamma)\delta_\tau+\gamma\delta_{1/\gamma}$, 
$$
\eta_{G_\tau}(x) = \frac{1-\gamma}{1+\tau x}+\frac{\gamma}{1+\frac{x}{\gamma}}.
$$
To find $x_1=\eta_{G_\tau}^{-1}(1-\gamma)$, $x_2=\eta_{G_\tau}^{-1}(1-2\gamma)$, it is sufficient to solve the following quadratic equations:
$$
\frac{1-\gamma}{1+\tau x_1}+\frac{\gamma}{1+\frac{x_1}{\gamma}}=1-\gamma,~~\frac{1-\gamma}{1+\tau x_2}+\frac{\gamma}{1+\frac{x_2}{\gamma}}=1-2\gamma,
$$
We can rewrite this as
$$
(1-\gamma)\tau x_1^2+\tau(1-2\gamma)\gamma x_1-\gamma^2=0,
$$
$$
(1-2\gamma)\tau x_2^2+\gamma(\tau-1-3\gamma\tau)x_2-2\gamma^2=0.
$$
Since we are looking for a positive solution, we find that 
\begin{align*}
x_1&=\frac{-\tau\gamma(1-2\gamma)+\sqrt{\tau^2\gamma^2(1-2\gamma)^2+4\tau\gamma^2(1-\gamma)}}{2(1-\gamma)\tau}\\
&=\frac{4\tau\gamma^2(1-\gamma)}{2(1-\gamma)\tau}\cdot\frac{1}{\tau\gamma(1-2\gamma)+\sqrt{\tau^2\gamma^2(1-2\gamma)^2+4\tau\gamma^2(1-\gamma)}}\\
&=\frac{2\gamma^2}{\tau\gamma(1-2\gamma)+\sqrt{\tau^2\gamma^2(1-2\gamma)^2+4\tau\gamma^2(1-\gamma)}}\\
&=O(\tau^{-1/2}),
\end{align*}
$$
x_2=\frac{\gamma(1+3\gamma\tau-\tau)+\sqrt{\gamma^2(1+3\gamma\tau-\tau)^2+8\tau\gamma^2(1-2\gamma)}}{2(1-2\gamma)\tau}\sim\frac{2\gamma}{2(1-2\gamma)\tau}=O(\tau^{-1}).
$$
The order of magnitude calculations follow as $\tau\to 0$. Specifically, for the first case, one can check that the numerator is of order $\tau^{1/2}$ by using the formula for conjugate square roots. For the second case, we only need to notice that the numerator tends to $2\gamma$ as $\tau\to 0$. So ARE$(2)=2x_1/x_2=O(\tau^{1/2})\rightarrow 0$.

\subsection{Proof of Theorem \ref{ex2}}
\label{pf:ex2}

As before, to find $x_1=\eta_{G}^{-1}(1-\gamma)$, $x_k=\eta_{G}^{-1}(1-k\gamma)$, we solve the quadratic equations:
$$
\frac{1-c}{1+\tau x_1}+\frac{c}{1+\alpha\tau x_1}=1-\gamma,~~\frac{1-c}{1+\tau x_k}+\frac{c}{1+\alpha\tau x_k}=1-k\gamma,
$$
and choose the positive solutions:
$$
x_1=\frac{(\gamma-c)\alpha+c+\gamma-1+\sqrt{((\gamma-c)\alpha+c+\gamma-1)^2+4\gamma(1-\gamma)\alpha}}{2(1-\gamma)\alpha\tau},
$$

$$
x_k=\frac{(k\gamma-c)\alpha+c+k\gamma-1+\sqrt{((k\gamma-c)\alpha+c+k\gamma-1)^2+4k\gamma(1-k\gamma)\alpha}}{2(1-k\gamma)\alpha\tau}.
$$
So
$$
ARE(k)=\frac{kx_1}{x_k}=\frac{k(1-k\gamma)}{1-\gamma}\cdot\frac{(\gamma-c)\alpha+c+\gamma-1+\sqrt{((\gamma-c)\alpha+c+\gamma-1)^2+4\gamma(1-\gamma)\alpha}}{(k\gamma-c)\alpha+c+k\gamma-1+\sqrt{((k\gamma-c)\alpha+c+k\gamma-1)^2+4k\gamma(1-k\gamma)\alpha}}
$$
is independent of $\tau$, which intuitively makes sense. Indeed, the problem is scale invariant. We can rescale our data by any constant and the relative efficiency does not change. 

Observe that, when we take $\alpha$ to tend to infinity, the limit will depend on the choice of $c$ and $\gamma$.  There are five sub-cases:

\begin{compactenum}
\item $0<c<\gamma$.\\
\begin{align*}
ARE(2)&=\frac{2x_1}{x_2}=\frac{2(1-2\gamma)}{1-\gamma}\cdot\frac{(\gamma-c)\alpha+c+\gamma-1+\sqrt{((\gamma-c)\alpha+c+\gamma-1)^2+4\gamma(1-\gamma)\alpha}}{(2\gamma-c)\alpha+c+2\gamma-1+\sqrt{((2\gamma-c)\alpha+c+2\gamma-1)^2+8\gamma(1-2\gamma)\alpha}}\\
&=O(1), ~~\alpha\rightarrow+\infty.
\end{align*}
In this case, since the limit of $2x_1/x_2$ is $O(1)$, we should look at the limit of $kx_1/x_k$ for $k>2$, which turns out to be also $O(1)$:
\begin{align*}
&\lim_{\alpha\rightarrow+\infty}ARE(k)=\lim_{\alpha\rightarrow+\infty}\frac{kx_1}{x_k}\\
&=\lim_{\alpha\rightarrow+\infty}\frac{k(1-k\gamma)}{1-\gamma}\cdot\frac{(\gamma-c)\alpha+c+\gamma-1+\sqrt{((\gamma-c)\alpha+c+\gamma-1)^2+4\gamma(1-\gamma)\alpha}}{(k\gamma-c)\alpha+c+k\gamma-1+\sqrt{((k\gamma-c)\alpha+c+k\gamma-1)^2+4k\gamma(1-k\gamma)\alpha}}\\
&=\frac{k(\gamma-c)(1-k\gamma)}{(1-\gamma)(k\gamma-c)}.
\end{align*}

\item $c=\gamma$.\\

The previous example is a sub-case of this case, where the ratio between the large and small variances equals $\alpha: = 1/(\gamma \tau) \to \infty$.
$$
ARE(2)=\frac{2x_1}{x_2}=\frac{2(1-2\gamma)}{1-\gamma}\cdot\frac{2\gamma-1+\sqrt{(2\gamma-1)^2+4\gamma(1-\gamma)\alpha}}{\gamma\alpha+3\gamma-1+\sqrt{(\gamma\alpha+3\gamma-1)^2+8\gamma(1-2\gamma)\alpha}}=O(\alpha^{-1/2}), ~~\alpha\rightarrow+\infty.
$$

\item $\gamma<c<2\gamma$.\\
\begin{align*}
ARE(2)&=\frac{2x_1}{x_2}=\frac{2(1-2\gamma)}{1-\gamma}\cdot\frac{(\gamma-c)\alpha+c+\gamma-1+\sqrt{((\gamma-c)\alpha+c+\gamma-1)^2+4\gamma(1-\gamma)\alpha}}{(2\gamma-c)\alpha+c+2\gamma-1+\sqrt{((2\gamma-c)\alpha+c+2\gamma-1)^2+8\gamma(1-2\gamma)\alpha}}\\
&=O(\alpha^{-1}), ~~\alpha\rightarrow+\infty.
\end{align*}

\item $c=2\gamma$.\\
\begin{align*}
ARE(2)&=\frac{2x_1}{x_2}=\frac{2(1-2\gamma)}{1-\gamma}\cdot\frac{-\gamma\alpha+3\gamma-1+\sqrt{(-\gamma\alpha+3\gamma-1)^2+4\gamma(1-\gamma)\alpha}}{4\gamma-1+\sqrt{(4\gamma-1)^2+8\gamma(1-2\gamma)\alpha}}\\
&=O(\alpha^{-1/2}), ~~\alpha\rightarrow+\infty.
\end{align*}

\item $c>2\gamma$.\\

Here, we can easily find $x_1/x_2=O(1)$ as $\alpha\rightarrow+\infty$, but now the exact value of $c$ matters. That is, suppose $c=M\gamma$ for some $M>2$. Then we will find that
$$
\lim_{\alpha\rightarrow+\infty}ARE(k)=\lim_{\alpha\rightarrow+\infty}\frac{kx_1}{x_k}=
\begin{cases}
\frac{c-k\gamma}{c-\gamma},~~k<M,\\
O(\alpha^{-1/2}),~~k=M,\\
O(\alpha^{-1}),~~k>M.
\end{cases}
$$



\end{compactenum}




\subsection{Proof of Theorem \ref{fe_ell}}
\label{pf:fe_ell}
In the proof of Theorem \ref{IE_mp_thm}, we derive the limit for 
$$
\frac{\tr((X_i^\top X_i)^{-1}X^\top X)}{p}=\frac{p+\sum_{j\neq i}\tr((X_i^\top X_i)^{-1}X_j^\top X_j)}{p},
$$
which is 
$$
1+(\frac{1}{\gamma}\E_GT-\frac{1}{\gamma_i}\E_{G_i}T)f(\gamma_i, G_i).
$$
Then the desired result follows. It is not hard to check the results in the MP case.

\subsection{Proof of Theorem \ref{IE_mp_thm}}
\label{pf:IE_mp_thm}

We consider the Elliptical type sample covariance matrices first. Recall that we have $X=\Gamma^{1/2}Z\Sigma^{1/2}$, where $Z$ is an $n\times p$ matrix with standardized entries, $\Gamma$ is an $n\times n$ diagonal matrix with positive entries and $\Sigma$ is a $p\times p$ nonnegative-definite matrix. Our goal is to understand the limit of 
$$
\tr[(X_i^\top X_i)^{-1}X^\top X]=\tr[(X_i^\top X_i)^{-1}X_i^\top X_i]+\sum_{j\neq i}\tr[(X_i^\top X_i)^{-1}X_j^\top X_j]=p+\sum_{j\neq i}\tr[(X_i^\top X_i)^{-1}X_j^\top X_j].
$$
If we delete all rows of $X_i$ from $X$ and denote the remaining matrix by $\tilde{X}_i$, then this can be written as
$$p+\tr[\tilde{X}_i(X_i^\top X_i)^{-1}\tilde{X}_i^\top ].$$
Since $X_i=\Gamma_i^{1/2}Z_i\Sigma^{1/2}, \tilde{X}_i=\tilde{\Gamma}_i^{1/2}\tilde{Z}_i\Sigma^{1/2}$, where the $n_i\times p$ matrix $Z_i$ and the $(n-n_i)\times p$ matrix $\tilde{Z}_i$ both have i.i.d. standardized entries, the $(n-n_i)\times(n-n_i)$ diagonal matrix $\tilde{\Gamma}_i$ is the remaining matrix after deleting all the entries of $\Gamma_i$ from $\Gamma$. Then we find that the population covariance $\Sigma$ will cancel out:
\begin{align*}
&\tr[\tilde{X}_i(X_i^\top X_i)^{-1}\tilde{X}_i^\top ]
=\tr[\tilde{\Gamma}_i^{1/2}\tilde{Z}_i\Sigma^{1/2}(\Sigma^{1/2}Z_i^\top\Gamma_iZ_i\Sigma^{1/2})^{-1}\Sigma^{1/2}\tilde{Z}_i^\top\tilde{\Gamma}_i^{1/2} ]\\
&=\tr[\tilde{\Gamma}_i^{1/2}\tilde{Z}_i(Z_i^\top\Gamma_i Z_i)^{-1}\tilde{Z}_i^\top\tilde{\Gamma}_i^{1/2}].
\end{align*}
To evaluate the limit, we will use the following lemma from \cite{rubio2011spectral}.
\begin{lemma}[Concentration of average of quadratic forms, Lemma 4 in \cite{rubio2011spectral}] Let $\mathcal{U}=\{\xi_k\in\mathbb C^M, 1\leq k\leq N\}$ denote a collection of i.i.d. random vectors with i.i.d entries that have mean $0$, variance $1$ and finite $4+\delta$ moment, $\delta>0$. Furthermore, consider a collection of random matrices $\{\mathbf{C}_{(k)}\in\mathbb C^{M\times M}, 1\leq k\leq N\}$ such that, for each $k$, $\mathbf{C}_{(k)}$ may depend on all the elements of $\mathcal{U}$ except for $\xi_k$, and the trace norm of $\mathbf{C}_{(k)}$, $||\mathbf{C}_{(k)}||_{\tr}$ is almost surely uniformly bounded for all $M$. Then, almost surely as $N\to\infty$,
$$\left|\frac{1}{N}\sum_{k=1}^N\left(\xi_k^H\mathbf{C}_{(k)}\xi_k-\tr\mathbf{C}_{(k)}\right)\right|\rightarrow 0.$$
\label{average_quad_form}
\end{lemma}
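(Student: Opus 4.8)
The plan is to prove Lemma \ref{average_quad_form} by combining a conditional quadratic-form concentration inequality with a martingale argument, the delicate point being the leave-one-out dependence of $\mathbf{C}_{(k)}$ on the remaining vectors. Write $D_k = \xi_k^H \mathbf{C}_{(k)}\xi_k - \tr\mathbf{C}_{(k)}$ and $W_N = N^{-1}\sum_{k=1}^N D_k$. The central structural observation is that $\mathbf{C}_{(k)}$ is independent of $\xi_k$, so if $\mathcal{F}_{-k}$ denotes the $\sigma$-field generated by all vectors except $\xi_k$, then, using $\E[\bar\xi_{ki}\xi_{kj}]=\delta_{ij}$,
$$\E\left[\xi_k^H\mathbf{C}_{(k)}\xi_k \mid \mathcal{F}_{-k}\right] = \sum_{i,j}(\mathbf{C}_{(k)})_{ij}\,\E[\bar\xi_{ki}\xi_{kj}] = \tr\mathbf{C}_{(k)},$$
hence $\E[D_k\mid\mathcal{F}_{-k}]=0$ and in particular $\E D_k = 0$. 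Thus $W_N$ is an average of centered, but dependent, quadratic forms, and the task is to show $W_N\to 0$ almost surely.

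First I would control the conditional moments of each $D_k$. Conditioning on $\mathbf{C}_{(k)}$ (legitimate by independence) and applying the Bai--Silverstein quadratic-form inequality, for any even $q\geq 2$,
$$\E\left[|D_k|^q\mid\mathcal{F}_{-k}\right] \leq K_q\left[\left(\E|\xi_{11}|^4\,\tr(\mathbf{C}_{(k)}\mathbf{C}_{(k)}^H)\right)^{q/2} + \E|\xi_{11}|^{2q}\,\tr\left((\mathbf{C}_{(k)}\mathbf{C}_{(k)}^H)^{q/2}\right)\right].$$
The trace-norm hypothesis enters here: since $\tr(\mathbf{C}_{(k)}\mathbf{C}_{(k)}^H)=\|\mathbf{C}_{(k)}\|_F^2\leq\|\mathbf{C}_{(k)}\|_{tr}^2$ and $\tr((\mathbf{C}_{(k)}\mathbf{C}_{(k)}^H)^{q/2})=\sum_i\sigma_i(\mathbf{C}_{(k)})^q\leq\|\mathbf{C}_{(k)}\|_{tr}^q$, both quantities are uniformly bounded almost surely, so the conditional moment of $D_k$ is controlled by the entry moments alone.

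The weak moment hypothesis forces a truncation step. Because only a $4+\delta$ moment is assumed, the bound above is unavailable for large $q$. I would therefore truncate the entries at a slowly growing level, then centralize and rescale; the $4+\delta$ moment makes the resulting error in $W_N$ negligible almost surely, since the corresponding tail probabilities are summable and Borel--Cantelli applies. After truncation the entries are bounded, all their moments are finite, and the conditional moment bound holds with explicit (if growing) dependence on the truncation level, which is enough room to run the final argument at a fixed order such as $q=4$.

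The main obstacle, and the reason a naive variance computation fails, is exactly the leave-one-out dependence: $\mathbf{C}_{(k)}$ depends on every $\xi_j$ with $j\neq k$, so the $D_k$ are not independent and even the cross-covariances $\E[D_k\bar D_{k'}]$ do not visibly vanish. To absorb this I would decompose $W_N$ as a sum of martingale differences $\delta_m=(\E_m-\E_{m-1})W_N$ along the filtration $\mathcal{F}_m=\sigma(\xi_1,\dots,\xi_m)$ (noting $\E W_N=0$), and apply Burkholder's inequality together with the truncated moment bound to show $\E|W_N|^{4}=O(N^{-2})$. This bound is summable in $N$, so a final application of Borel--Cantelli upgrades convergence in moment to the claimed almost-sure convergence $W_N\to 0$.
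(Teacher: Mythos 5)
You should first know that the paper contains no proof of this lemma to compare against: it is imported verbatim, with attribution, as Lemma 4 of \cite{rubio2011spectral}, and is used in the proof of Theorem \ref{IE_mp_thm} purely as an external ingredient. Judged on its own merits, the opening of your argument is fine: the conditional centering $\E[D_k\mid\mathcal{F}_{-k}]=0$ is correct, the Bai--Silverstein bound on $\E[|D_k|^q\mid\mathcal{F}_{-k}]$ combined with $\tr(\mathbf{C}_{(k)}\mathbf{C}_{(k)}^H)\le\|\mathbf{C}_{(k)}\|_{tr}^2$ does control each individual quadratic form, and truncation is the standard remedy for the $4+\delta$ moment hypothesis. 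The genuine gap is the final Burkholder step. Writing $\delta_m=(\E_m-\E_{m-1})W_N$, one checks that terms with $k>m$ vanish, so that $\delta_m=N^{-1}\bigl[\E_m D_m+\sum_{k<m}(\E_m-\E_{m-1})D_k\bigr]$. To conclude $\E|W_N|^4=O(N^{-2})$ from Burkholder you need each $\delta_m$ to be of order $N^{-1}$, but the terms with $k<m$ are governed by the sensitivity of $\mathbf{C}_{(k)}$ to the \emph{single} vector $\xi_m$ --- a quantity about which your moment bounds say nothing, and which the hypotheses of the lemma (bounded trace norm only) do not constrain at all. There are up to $m-1$ such terms, each potentially of order one, so the asserted bound does not follow.

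This gap is not repairable, because the statement as quoted is in fact false without additional structure on how $\mathbf{C}_{(k)}$ depends on $\{\xi_j\}_{j\ne k}$. Take $M=1$ and let the $\xi_k$ be i.i.d.\ with $P(\xi_k=\pm\sqrt{2})=1/4$ and $P(\xi_k=0)=1/2$, so that the variables $\eta_k:=\xi_k^2-1$ are i.i.d.\ signs, and set $\mathbf{C}_{(k)}:=\prod_{j\ne k}\eta_j$. Then $\mathbf{C}_{(k)}$ depends only on $\{\xi_j\}_{j\ne k}$ and $\|\mathbf{C}_{(k)}\|_{tr}=1$ almost surely, yet
$$\frac{1}{N}\sum_{k=1}^N\left(\xi_k^H\mathbf{C}_{(k)}\xi_k-\tr\mathbf{C}_{(k)}\right)=\frac{1}{N}\sum_{k=1}^N\mathbf{C}_{(k)}\eta_k=\prod_{j=1}^N\eta_j\in\{-1,+1\},$$
which flips sign infinitely often and never approaches $0$. (In this example your martingale decomposition gives $\delta_m=0$ for $m<N$ and $\delta_N=\prod_j\eta_j$, exhibiting exactly the order-one increment that Burkholder cannot absorb.) Any correct proof must therefore invoke structure beyond the quoted hypotheses. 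In Rubio and Mestre's own setting the $\mathbf{C}_{(k)}$ are leave-one-out resolvents, so the influence of each $\xi_m$ on $\mathbf{C}_{(k)}$ is a rank-one perturbation of size $O(1/N)$, which is precisely what makes a martingale argument go through; and in the present paper's application (Section \ref{pf:IE_mp_thm}) the matrices $\mathbf{C}_{(k)}=\frac{n_i}{p}(Z_i^\top\Gamma_i Z_i)^{-1}(\tilde{\Gamma}_i)_{kk}$ are built from machine $i$'s data and are independent of the entire collection $\{\xi_k\}$, in which case the $D_k$ are conditionally independent and a Marcinkiewicz--Zygmund strong law suffices. Your proof would become valid only after adding such a hypothesis (independence from the collection, or an explicit $O(1/N)$ leave-one-out stability of $\mathbf{C}_{(k)}$ in each $\xi_m$) and using it where Burkholder is applied.
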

\noindent
For our purpose, we can take the number of summands to be $N=n-n_i,$ the dimension $M=p,$ and the inner matrices to be $\mathbf{C}_{(k)}=\frac{n_i}{p}(Z_i^\top\Gamma_i Z_i)^{-1}\cdot(\tilde{\Gamma}_i)_{kk}$. Also, we let $\xi_k^\top$ to be the $k$-th row of $\tilde{Z}_i$. By using the well-known result on spectrum separation\citep[see e.g.,][]{bai2009spectral}, almost surely, the smallest eigenvalue of ${n_i}^{-1}Z_i^\top Z_i$ is uniformly bounded below by some constant. Since $\lambda_{\min}({n_i}^{-1}Z_i^\top\Gamma_i Z_i)\geq \lambda_{\min}(\Gamma_i)\cdot\lambda_{\min}({n_i}^{-1}Z_i^\top Z_i)$, $\lambda_{\min}({n_i}^{-1}Z_i^\top\Gamma_i Z_i)$ is also uniformly bounded below almost surely. So under the assumption of Theorem \ref{IE_mp_thm}, we can check that the trace norm of $\frac{n_i}{p}(Z_i^\top\Gamma_i Z_i)^{-1}\cdot(\tilde{\Gamma}_i)_{kk}$ is indeed uniformly bounded. Then by Lemma \ref{average_quad_form}, we will have as $n\to\infty$
$$
\left|\frac{1}{n-n_i}\sum_{k=1}^{n-n_i}\left[\frac{n_i}{p}(\tilde{\Gamma}_i)_{kk}\cdot\xi_k^\top(Z_i^\top\Gamma_i Z_i)^{-1}\xi_k-\tr\left(\frac{n_i}{p}(Z_i^\top\Gamma_i Z_i)^{-1}\cdot(\tilde{\Gamma}_i)_{kk}\right)\right]\right|\to_{a.s.}0.
$$
This implies
$$
\frac{1}{n-n_i}\tr[\tilde{\Gamma}_i^{1/2}\tilde{Z}_i(Z_i^\top\Gamma_i Z_i)^{-1}\tilde{Z}_i^\top\tilde{\Gamma}_i^{1/2}]\to_{a.s.}f(\gamma_i, G_i)\left(\frac{\gamma_i}{\gamma_i-\gamma}\E_GT-\frac{\gamma}{\gamma_i-\gamma}\E_{G_i}T\right),
$$
since $\tr(Z_i^\top\Gamma_i Z_i)^{-1}\to_{a.s.} f(\gamma_i, G_i)$ and 
$$
\frac{\sum_k(\tilde{\Gamma}_i)_{kk}}{n-n_i}=\frac{n}{n-n_i}\cdot\frac{\tr(\Gamma)}{n}-\frac{n_i}{n-n_i}\cdot\frac{\tr(\Gamma_i)}{n_i}\to_{a.s.}\left(\frac{\gamma_i}{\gamma_i-\gamma}\E_GT-\frac{\gamma}{\gamma_i-\gamma}\E_{G_i}T\right).
$$
This holds for all $i$. Thus, for the elliptical model, we have
\begin{align*}
IE(X_1,\ldots,X_k)=\frac{1-\frac{p}{n}}{1-\frac{2p}{n}+\frac{1}{\sum_{i=1}^k\frac{n}{\tr[(X_i^\top X_i)^{-1}X^\top X]}}}
\to_{a.s.}
\frac{1-\gamma}{1-2\gamma+\frac{1}{\sum_{i=1}^k\psi(\gamma_i, G_i)}}
,
\end{align*}
where 
$$
\psi(\gamma_i, G_i)=\frac{1}{\gamma+(\E_GT-\frac{\gamma}{\gamma_i}\E_{G_i}T)f(\gamma_i,G_i)}.
$$
Now, for the MP model, we can simply take $\Gamma$ to be identity matrix, then the above result reduces to 
$$
IE(X_1,\ldots,X_k)=\frac{1-\frac{p}{n}}{1-\frac{2p}{n}+\frac{1}{\sum_{i=1}^k\frac{n}{\tr[(X_i^\top X_i)^{-1}X^\top X]}}}
\to_{a.s.}
\frac{1-\gamma}{1-2\gamma+\frac{\gamma(1-\gamma)}{1-k\gamma}},
$$
which finishes the proof.

\subsection{Proof of Theorem \ref{OE_mp_thm}}
\label{pf:OE_mp_thm}

We first provide the proof for the MP model. 
Since $\Sigma$ is positive definite, we have
$$x_t^\top (X_i^\top X_i)^{-1} x_t
= z_t^\top \Sigma^{1/2} (\Sigma^{1/2}Z_i^\top Z_i\Sigma^{1/2})^{-1} \Sigma^{1/2}z_t
= z_t^\top (Z_i^\top Z_i)^{-1} z_t.$$
This cancellation shows that the test error does not depend on the covariance matrix. 

For the null case, we will show below that we have, almost surely
$$ z_t^\top (Z_i^\top Z_i)^{-1} z_t \to  \frac{ \gamma_i}{1-\gamma_i}.$$ 

 Hence, we obtain that 
\begin{align*}
OE(x_t; X_1,\ldots,X_k)
\to_{a.s.}
\frac{1+ \frac{ \gamma}{1-\gamma}}
{1 + \frac{1}{\sum_{i=1}^k \frac{1-\gamma_i}{\gamma_i}}}
= 
\frac{\frac{1}{1-\gamma}}
{1 + \frac{1}{\frac{1}{\gamma}- k}}
.
\end{align*}

Under the elliptical model, we have
$x_t^\top (X_i^\top  X_i)^{-1} x_t
= g_t  z_t^\top (Z_i^\top \Gamma_i Z_i)^{-1} z_t.$
While $\Sigma$ still cancels out, the scale parameters do not cancel out anymore. Therefore, we must take them into account when taking the limits. However, similarly to the proof of Theorem \ref{fe_ell}, we find that, almost surely
$$z_t^\top (Z_i^\top \Gamma_i Z_i)^{-1} z_t  \to f(\gamma_i, G_i).$$
Putting these together finishes the proof. \\

To see the reason for the convergence of quadratic forms, we present a slightly different argument. In fact, Theorem \ref{OE_mp_thm} will still hold if we are only given the $4+c$-th moment of $z_1$ instead of $8+c$-th one. This follows by the concentration of quadratic forms $x^\top A x -p^{-1}\tr A \to 0$ for matrices $A$ whose spectral distribution converges, and for vectors $x$ with iid entries. Specifically, we will use the following well-known statement about concentration of quadratic forms. To use this result, we simply choose $x = z_t/\sqrt{p}$, and $A_p = (Z_i^\top \Gamma_i Z_i/p)^{-1}$, and the desired claim follows.

\begin{lemma}[Concentration of quadratic forms, consequence of Lemma B.26 in \citet{bai2009spectral}] Let $x \in \RR^p$ be a random vector with i.i.d. entries and $\EE{x} = 0$, for which $\EE{(\sqrt{p}x_i)^2} = \sigma^2$ and $\sup_i \EE{(\sqrt{p}x_i)^{4+\eta}}$ $ < C$ for some $\eta>0$ and $C <\infty$. Moreover, let $A_p$ be a sequence of random $p \times p$ symmetric matrices independent of $x$, with uniformly bounded eigenvalues. Then the quadratic forms $x^\top A_p x $ concentrate around their means at the following rate

$$P(|x^\top A_p x - p^{-1} \sigma^2 \tr A_p|^{2+\eta/2}>C) \le C p^{-(1+\eta/4)}.$$
\label{quad_form}
\end{lemma}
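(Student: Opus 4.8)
The plan is to deduce this directly from Bai--Silverstein's Lemma B.26, which bounds the central moments of a quadratic form $y^\top A y$ around $\tr A$ for a vector $y$ with independent, mean-zero, unit-variance entries and a \emph{nonrandom} matrix $A$. Our statement differs from that classical bound only in bookkeeping respects: the entries of $x$ carry a $1/\sqrt p$ scaling and variance $\sigma^2$ rather than $1$, and $A_p$ is random (though independent of $x$). I would dispose of the randomness of $A_p$ first, by conditioning on it; since $A_p$ and $x$ are independent, Lemma B.26 applies with $A_p$ held fixed, and the almost-sure eigenvalue bound lets the resulting conditional estimate pass through the outer expectation.

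Next I would normalize. Setting $y = \sqrt p\, x/\sigma$ produces a vector with i.i.d. mean-zero, unit-variance entries satisfying $\E|y_1|^{4+\eta}<\infty$, and
$$
x^\top A_p x - p^{-1}\sigma^2 \tr A_p
= \frac{\sigma^2}{p}\left(y^\top A_p y - \tr A_p\right).
$$
It then suffices to bound the $q$-th moment of $y^\top A_p y - \tr A_p$ and divide by $p^{\,q}$, where $q := 2 + \eta/2$ is the moment order we work with. The crucial point is that this $q$ is exactly calibrated to the hypotheses: Lemma B.26 at order $q$ requires a finite $2q$-th entrywise moment, and $2q = 4+\eta$, which is precisely what is assumed.

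Applying Lemma B.26 conditionally on $A_p$ would give
$$
\E\,|y^\top A_p y - \tr A_p|^{q}
\le C_q\left[\left(\E|y_1|^4\,\tr(A_pA_p^*)\right)^{q/2} + \E|y_1|^{2q}\,\tr\!\left((A_pA_p^*)^{q/2}\right)\right].
$$
I would then invoke the uniform eigenvalue bound $\|A_p\|_{op}\le K$ to control both traces: $\tr(A_pA_p^*)\le K^2 p$ and $\tr((A_pA_p^*)^{q/2})\le K^{q} p$. Since $q\ge 2$, the first bracketed term dominates, so $\E\,|y^\top A_p y - \tr A_p|^{q} = O(p^{q/2})$. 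Dividing by $p^{q}$ yields $\E\,|x^\top A_p x - p^{-1}\sigma^2\tr A_p|^{q} = O(p^{-q/2}) = O(p^{-(1+\eta/4)})$, and a single application of Markov's inequality turns this moment bound into the stated tail bound.

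This argument is essentially routine, and there is no genuinely difficult step. The only points demanding care are the normalization and, above all, the matching of the working moment order $q = 2+\eta/2$ to the assumed $(4+\eta)$-th moment, together with the observation that uniform boundedness of the spectrum of $A_p$ renders both trace terms linear in the dimension (up to the relevant powers), so that the $p^{-q}$ prefactor produces exactly the rate $p^{-(1+\eta/4)}$.
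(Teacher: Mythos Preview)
Your proposal is correct and follows essentially the same route as the paper: normalize to $y=\sqrt{p}\,x/\sigma$, apply Lemma~B.26 with $q=2+\eta/2$ (so that the required $2q$-th moment is exactly the assumed $(4+\eta)$-th moment), bound both trace terms by $O(p^{q/2})$ via the uniform spectral bound on $A_p$, and finish with Markov's inequality. Your explicit conditioning on $A_p$ to reduce to the deterministic-matrix case is a point the paper leaves implicit, but otherwise the arguments coincide.
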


To prove this, we will use the following Trace Lemma quoted from \cite{bai2009spectral}, see also \cite{dobriban2017optimal} for a similar argument. 

\begin{lemma}[[Trace Lemma, Lemma B.26 of \cite{bai2009spectral}] Let $y$ be a p-dimensional random vector of i.i.d. elements with mean 0. Suppose that $\EE{y_i^2} = 1$, and let $A_p$ be a fixed $p \times p$ matrix. Then  
$$\EE{|y^\top A_p y-\tr A_p|^q} \le C_q \left\{\left(\EE{y_1^4}\tr[A_pA_p^\top]\right)^{q/2}+\EE{y_1^{2q}}\tr[(A_pA_p^\top)^{q/2}]\right\},$$ 
for some constant $C_q$ that only depends on $q$. 
\label{trace_lemma}
\end{lemma}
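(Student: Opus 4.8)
The statement is the classical moment bound for quadratic forms in independent variables, and I would prove it by a martingale decomposition together with Rosenthal's and Burkholder's inequalities. Assume $q\ge 2$ (for $q\in[1,2)$ the bound follows from the case $q=2$ by Jensen). Writing $a_{ij}$ for the entries of $A_p$ and using $\E y_iy_j=\delta_{ij}$, we have $\E[y^\top A_p y]=\tr A_p$, and
$$y^\top A_p y-\tr A_p=\sum_i a_{ii}(y_i^2-1)+\sum_{i\ne j}a_{ij}y_iy_j=:S_1+S_2.$$
Since $|S_1+S_2|^q\le 2^{q-1}(|S_1|^q+|S_2|^q)$, it suffices to bound $\E|S_1|^q$ and $\E|S_2|^q$ separately and match them to the two terms on the right-hand side.

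For the diagonal part $S_1$, the summands $a_{ii}(y_i^2-1)$ are independent and centered, so Rosenthal's inequality gives
$$\E|S_1|^q\le C_q\Big[\big(\sum_i a_{ii}^2\,\E(y_1^2-1)^2\big)^{q/2}+\sum_i|a_{ii}|^q\,\E|y_1^2-1|^q\Big].$$
Here $\sum_i a_{ii}^2\le\tr[A_pA_p^\top]$ and $\E(y_1^2-1)^2\le\E y_1^4$, so the first term is bounded by $(\E y_1^4\,\tr[A_pA_p^\top])^{q/2}$. For the second, $\E|y_1^2-1|^q\le C_q'\,\E y_1^{2q}$ (using $\E y_1^{2q}\ge(\E y_1^2)^q=1$), while $\sum_i|a_{ii}|^q\le\tr[(A_pA_p^\top)^{q/2}]$ by the diagonal-pinching (Schatten-norm) comparison $\|\mathrm{diag}(A_p)\|_{S_q}\le\|A_p\|_{S_q}$. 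Both contributions are thus of the required form.

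For the off-diagonal part $S_2$, I would exploit a martingale structure. Let $\mathcal F_k=\sigma(y_1,\dots,y_k)$ with conditional expectations $\E_k$, and set $d_k=(\E_k-\E_{k-1})[S_2]$. A direct computation using independence yields $d_k=y_k\,c_k$, where $c_k=\sum_{i<k}(a_{ik}+a_{ki})y_i$ is $\mathcal F_{k-1}$-measurable, so $(d_k)$ is a martingale difference sequence and $S_2=\sum_k d_k$. The Burkholder--Rosenthal inequality then gives
$$\E|S_2|^q\le C_q\Big[\E\big(\sum_k\E_{k-1}[d_k^2]\big)^{q/2}+\sum_k\E|d_k|^q\Big].$$
The jump term is handled by $\E|d_k|^q=\E|y_1|^q\,\E|c_k|^q$ and a further application of Rosenthal to $c_k$; summing and using $(\E|y_1|^q)^2\le\E y_1^{2q}$ with the same norm comparisons bounds it by the two target terms. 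The \textbf{main obstacle} is the conditional quadratic variation: since $\E_{k-1}[d_k^2]=c_k^2$, we have $\sum_k\E_{k-1}[d_k^2]=\sum_k c_k^2$, which is itself a \emph{random} quadratic form in $y$, so its $(q/2)$-th moment---not merely its mean $\E\sum_k c_k^2=\sum_k\sum_{i<k}(a_{ik}+a_{ki})^2\le 4\tr[A_pA_p^\top]$---must be controlled.

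I would resolve this by induction on the exponent. Expanding, $\sum_k c_k^2=y^\top B y$ for the matrix $B_{ij}=\sum_{k>\max(i,j)}(a_{ik}+a_{ki})(a_{jk}+a_{kj})$, so the fluctuation $\sum_k c_k^2-\E\sum_k c_k^2=y^\top B y-\tr B$ is again a centered quadratic form, to which the lemma applies at exponent $q/2$. Since $B$ is built from products of entries of $A_p$, one checks $\tr[BB^\top]\le C\,\|A_p\|_{op}^2\,\tr[A_pA_p^\top]$, and using $\|A_p\|_{op}^2\le\tr[A_pA_p^\top]$ the resulting bound is dominated by $(\tr[A_pA_p^\top])^{q/2}$ and $\E y_1^{2q}\,\tr[(A_pA_p^\top)^{q/2}]$. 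Hence $\E(\sum_k c_k^2)^{q/2}$ is of the required order, and combining with the diagonal estimate completes the proof with a constant $C_q$ depending only on $q$. This self-referential control of the quadratic variation is the crux; everything else is Rosenthal's inequality and Schatten-norm bookkeeping.
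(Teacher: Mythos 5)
The paper itself contains no proof of this lemma; it is quoted verbatim from Bai and Silverstein's book (their Lemma B.26), so your attempt has to be judged against the classical argument, which is indeed the martingale route you follow. Your diagonal term, the martingale differences $d_k=y_kc_k$, the Burkholder--Rosenthal step, and the jump-term bookkeeping are all correct, and you correctly isolate the crux: the $(q/2)$-th moment of the quadratic variation $\sum_k c_k^2=y^\top By$. The gap is in how you close the induction there. Applying the lemma at exponent $q/2$ to $y^\top By-\tr B$ produces \emph{two} terms: $\left(\E y_1^4\,\tr[BB^\top]\right)^{q/4}$ and $\E y_1^{q}\,\tr[(BB^\top)^{q/4}]$. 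Your argument addresses only the first. (Even there, the claim $\tr[BB^\top]\le C\|A_p\|_{op}^2\tr[A_pA_p^\top]$ is not an elementary check: $B=\tilde S^\top\tilde S$, where $\tilde S$ is the strictly triangular truncation of $A_p+A_p^\top$, and triangular truncation is \emph{not} operator-norm bounded uniformly in $p$ --- it costs a factor $\log p$; what is elementary, by entrywise Cauchy--Schwarz, is $\tr[BB^\top]\le 16\,(\tr[A_pA_p^\top])^2$, which suffices for this term because $\E y_1^4\ge 1$.) The second term is the real obstruction: you would need $\tr[(BB^\top)^{q/4}]=\tr[B^{q/2}]=\|\tilde S\|_{S_q}^q\le C_q\,\tr[(A_pA_p^\top)^{q/2}]$, i.e.\ boundedness of triangular truncation on the Schatten class $S_q$. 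That is a genuine theorem (Gohberg--Krein/Macaev; its constant depends only on $q$, so invoking it would complete your proof), and nothing in your sketch supplies it. It also cannot be bypassed with the bounds you do have: estimating $\tr[B^{q/2}]\le(\tr B)^{q/2}$ or $(\tr[B^2])^{q/4}$ leaves you with the term $\E y_1^{q}\,(\tr[A_pA_p^\top])^{q/2}$, and this is provably \emph{not} dominated by the lemma's right-hand side. For example, take $A_p$ orthogonal, so that $\tr[A_pA_p^\top]=\tr[(A_pA_p^\top)^{q/2}]=p$, and let $y_1$ take values $\pm p^{1/4}$ with probability $1/(2p)$ each and $\pm a$ otherwise, with $a$ fixed so that $\E y_1^2=1$; then $\E y_1^4=O(1)$, $\E y_1^6\approx p^{1/2}$, $\E y_1^{12}\approx p^2$, and for $q=6$ your bound is of order $p^{7/2}$ while the lemma's right-hand side is of order $p^3$.

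Two remarks on repairing the argument. First, the gap only opens for $q>4$: when $2<q\le 4$ the inductive call is at exponent $q/2\le 2$, where only the Frobenius-type term appears, so your proof as written is complete in that range. Second, if you want to stay elementary rather than cite the truncation theorem, decouple instead of filtering: by the de la Pe\~na--Gin\'e decoupling inequality, $\E|\sum_{i\ne j}a_{ij}y_iy_j|^q\le C_q\,\E|\sum_{i\ne j}a_{ij}y_i\tilde y_j|^q$ with $\tilde y$ an independent copy of $y$; conditioning on $\tilde y$ and applying Rosenthal, the conditional quadratic variation becomes a quadratic form in the \emph{full} (untruncated) Gram matrix, essentially $A_p^\top A_p$, whose Schatten norms are exactly those of $A_p$, so the bound $\tr[(BB^\top)^{q/4}]\le C_q\tr[(A_pA_p^\top)^{q/2}]$ holds trivially. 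The same induction on the exponent then closes with purely elementary bookkeeping; this decoupled route, rather than the natural-filtration martingale, is the one that yields the stated two-term bound cleanly.
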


\begin{proof}
Under the conditions of Lemma \ref{quad_form}, the operator norms $\|A_p\|_2$ are almost surely uniformly bounded by a constant $C$, thus $\tr[(A_pA_p^\top)^{q/2}] \le p C^q$ and $\tr[A_pA_p^\top] \le p C^2$. Consider now a random vector $x$ with the properties assumed in the present lemma. For $y = \sqrt{p}x/\sigma$ and $q = 2+\eta/2$, using that $\EE{y_i^{2q}}\le C$ and the other the conditions in Lemma \ref{quad_form}, Lemma \ref{trace_lemma} thus yields
$$\frac{p^q}{\sigma^{2q}}\EE{|x^\top A_p x-\frac{\sigma^2}{p}\tr A_p|^q} \le C \left\{\left( p C^2\right)^{q/2}+pC^q\right\},$$ 
or equivalently $\EE{|x^\top A_p x-\frac{\sigma^2}{p}\tr A_p|^{2+\eta/2}} \le C p^{-(1+\eta/4)}$. 

By Markov's inequality applied to the $2+\frac{\eta}{2}$-th moment of $\varepsilon_p = x^\top A_p x-\frac{\sigma^2}{p}\tr A_p$, we obtain as required 
$$P(|\ep_p|^{2+\eta/2}>C) \le C p^{-(1+\eta/4)}.$$
\end{proof}

\subsection{Proof of Theorem \ref{ce_ell}}
\label{pf:ce_ell}

From Theorem \ref{gen_det}, it follows that the inverse sample covariance matrix $\hSigma$ is equivalent to a scaled version of the population covariance
$$\hSigma^{-1} \asymp \Sigma^{-1} \cdot e_p, $$
for some scalar sequence $e_p>0$. By taking in Theorem \ref{gen_det} the matrix $C_p = E_j E_j^\top$, the $p\times p$ matrix with a 1 in the $(j,j)$-th entry, and zeros otherwise, we find that almost surely,

$$[\hSigma^{-1}]_{jj} - [\Sigma^{-1}]_{jj} \cdot e_p \to 0, $$

We can apply this to each sub-matrix $X_i$ to find

$$n_i \cdot [(X_i^\top X_i)^{-1}]_{jj} - [\Sigma^{-1}]_{jj} \cdot e_p(i) \to 0.$$

Here $e_p(i)$ is the solution to the fixed point equation 
\begin{align*}
1 &= \frac{1}{n_i} \tr\left[e_p(i) \Gamma_i (I_{n_i}+\gamma_{p,i} \cdot e_p(i) \Gamma_i)^{-1}\right].
\end{align*}

Moreover, $\gamma_{p,i} = p/n_i$ and $\Gamma_i$ is the $n_i \times n_i$ sub-matrix of $\Gamma$ corresponding to the $i$-th machine. It follows that the CE has a deterministic equivalent equal to
\begin{align*}
&\frac{[\Sigma^{-1}]_{jj} \cdot e_p}{n}
\cdot 
\sum_{i=1}^k \frac{n_i}{[\Sigma^{-1}]_{jj} \cdot e_p(i)} = \\
=&\frac{p \cdot e_p}{n}
\cdot 
\sum_{i=1}^k \frac{n_i}{p \cdot e_p(i)} \to
\gamma \cdot e(\gamma,G)
\cdot 
\sum_{i=1}^k \frac{1}{\gamma_i\cdot e(\gamma_i,G_i)}
.
\end{align*}
Here $e(\gamma_i,G_i)$ are the quantities encountered before, discussed after Theorem \ref{gen_det}. The convergence follows from the discussion after Theorem \ref{gen_det}. Also, from the definition of $f(\gamma, G) $ it follows that $f(\gamma, G) = \gamma e(\gamma, G)$, so that we get the desired result. This finishes the proof. 

\section{Special case of parameter estimation}
\label{par-est}
\subsection{Suboptimal weights}
\label{subopt}

If we take all weights $w_i$ to be equal, i.e. $w_i=1/k$, then the MSE is
$$
MSE_{subopt}=\frac{\sigma^2}{k^2}\sum_{i=1}^k\tr(X_i^\top X_i)^{-1}\to \frac{\sigma^2}{k^2}\sum_{i=1}^k \frac{\gamma_i \cdot \E_{H} T^{-1}}{1-\gamma_i}.$$

Thus the ARE of the equally weighted estimator becomes (with the notation AE denoting asymptotic MSE)
$$
ARE_{subopt}=\frac{AE(\hbeta_{dist}(1/k,\ldots,1/k))}{AE_{subopt}}
=\frac{k^2\frac{\gamma}{1-\gamma}}{\sum_{i=1}^k\frac{\gamma_i}{1-\gamma_i}}.$$
Now, $ARE_{subopt}$ can be viewed as a harmonic mean of the numbers 
$$k\frac{\gamma}{1-\gamma}\frac{1-\gamma_i}{\gamma_i},$$
while the optimal ARE is the corresponding arithmetic mean. Therefore, we have  $ARE_{subopt}\le ARE.$

\subsection{Properties and interpretation of the relative efficiency.}
\label{prop-int}

 Let $f(n,p,k)$ be the relative efficiency for estimation, $(n-kp)/(n-p)$. If $kp>n$, that expression is negative, but in that case it is more proper to define the relative efficiency as 0. So, we consider
$$f(n,p,k) = \max\left(\frac{n-kp}{n-p},0\right).$$

This has the following properties. Each of these has a statistical interpretation. 

\begin{compactenum}
\item {\bf Well-definedness}. $f$ is well-defined for all $n,p,k$ such that $n>p$
\item {\bf Range}. $0\le f\le 1$ for all $n,p,k$. Clearly the efficiency should be between zero and unity.

Also, $f$ is zero for $k \ge n/p$. In this case, some machine has an OLS estimator that is not well-defined. 

Moreover, $f = 1$ when $k=1$ or when $p=0$. When we have one machine, the efficiency is unity by definition. When $p=0$, the problem is not well-defined, as there are no parameters to estimate.

\item {\bf Monotonicity}.

\begin{compactenum}
\item {\bf $f$ is monotone decreasing in $k$.} This property is easy to interpret. The distributed regression problem gets harder as $k$ increases.

\item {\bf $f$ is monotone increasing in $n$.} The linear regression problem should get easier as $n$ grows. However, it turns out that more is true. The distributed problem gets relatively easier compared to the "shared" problem. 

\item {\bf $f$ is monotone decreasing in $p$.}  Similarly, a typical linear regression problem should get harder as $p$ grows. However, the relative difficulty of solving the distributed problem also gets larger. 
\end{compactenum}

\item {\bf Limits and singularity}. 

\begin{compactenum}
\item {\bf $n\to \infty$}. When $n \to \infty$ with fixed $k,p$, then $f$ tends to unity. When $n\to \infty$, the distributed estimator becomes asymptotically efficient. 

\item {\bf $p=n$}. The function is singular when $p =n$, because the OLS estimator itself is singular when $p=n$.
\end{compactenum}
\end{compactenum}

Note that these properties are not enough to characterize the relative efficiency. In fact, for any monotone increasing transform such that $g(0) = 0$ and $g(1) = 1$, $g(f(n,p,k))$ has the same properties.


 
\subsection{Degrees of freedom interpretation}
\label{dof}

Next we give a multi-response regression characterization that heuristically gives an upper bound on the "\emph{degrees of freedom}" for distributed regression. This will be helpful to understand the asymptotic formulas derived above.

We re-parametrize $Y = X\beta + \ep$, treating the samples on each machine as a different outcome. We write the $n \times k$ multi-response outcome matrix $\underline Y$, the $n \times pk$ feature matrix $\underline X$, and the corresponding noise $\underline \ep$ as
$$\underline Y = 
\begin{bmatrix}
    Y_{1} & 0 & \dots  & 0 \\
    0 & Y_{2} & \dots  & 0 \\
    \vdots & \vdots & \ddots & \vdots \\
    0 & 0 & \dots  & Y_{k}
\end{bmatrix}
,\,\,\underline X = 
\begin{bmatrix}
    X_{1} & 0 & \dots  & 0 \\
    0 & X_{2} & \dots  & 0 \\
    \vdots & \vdots & \ddots & \vdots \\
    0 & 0 & \dots  & X_{k}
\end{bmatrix}
,\,\,\underline \ep = 
\begin{bmatrix}
    \ep_{1} & 0 & \dots  & 0 \\
    0 & \ep_{2} & \dots  & 0 \\
    \vdots & \vdots & \ddots & \vdots \\
    0 & 0 & \dots  & \ep_{k}
\end{bmatrix}.
$$

We also introduce $\underline \beta$, the $pk \times k$ parameter matrix, which shares parameters across the $k$ outcomes: 
$$\underline \beta = 
\begin{bmatrix}
    \beta & 0 & \dots  & 0 \\
    0 & \beta & \dots  & 0 \\
    \vdots & \vdots & \ddots & \vdots \\
    0 & 0 & \dots  & \beta
\end{bmatrix}
=  I_k \otimes \beta
$$

Note that $Y = X\beta + \ep$ is equivalent to $\underline Y = \underline X\underline \beta + \underline \ep$. The OLS estimator of $\underline \beta$ is  $\hat{\underline \beta}  = (\underline X^\top \underline X)^{-1} \underline X^\top \underline Y$. This can be calculated as

$$\hat{\underline \beta} = 
\begin{bmatrix}
    \hbeta_1 & 0 & \dots  & 0 \\
    0 & \hbeta_2 & \dots  & 0 \\
    \vdots & \vdots & \ddots & \vdots \\
    0 & 0 & \dots  & \hbeta_k
\end{bmatrix}
= 
\begin{bmatrix}
    (X_1^\top X_1)^{-1}X_1^\top Y_1 & 0 & \dots  & 0 \\
    0 & (X_2^\top X_2)^{-1}X_2^\top Y_2 & \dots  & 0 \\
    \vdots & \vdots & \ddots & \vdots \\
    0 & 0 & \dots  & (X_k^\top X_k)^{-1}X_k^\top Y_k
\end{bmatrix}
$$
Notice that the estimators of the coefficients of different outcomes are the familiar distributed OLS estimators.
Now, we can find a plug-in estimator of $\beta$, based on $\underline \beta$. Given the form of $\underline \beta$ above, for any vector $w$ such that $\sum_{i=1}^k w_i = 1$, we have that $\beta$ can be expressed in terms of the tensorized parameter $\underline \beta$ as a weighted combination
$$\beta = (1_p^\top\otimes I_k) \underline  \beta w.
$$
Therefore, for any unbiased estimator of $\underline \beta$, the corresponding weighted combination estimators given below are unbiased for $\beta$: 
$$\hbeta(w) = (1_p^\top\otimes I_k) \hat{\underline \beta} w.
$$
In our case, given the zeros in the estimator, this simply reduces to the weighted sum $\hbeta(w) = \sum_{i=1}^k w_i \hbeta_i$.

This explains how our problem can be understood in the framework of multi-response regression. Also, the number of parameters in that problem is $kp$, so the "degrees of freedom" is $n - kp$. 
Indeed, the residual effective degrees of freedom $\hat{r} =y-Hy$ is usually defined as $\tr (I-H)$. Let $H_i$ be the hat matrix on the $i$-th machine, so that $H_i = X_i (X_i^\top X_i)^{-1} X_i^\top$. Then it is easy to see that $\tr (I-H_i) = n_i - p$, for all $i$. Since $H_{dist}$ is simply the block diagonal matrix with $H_i$ as blocks, we see that $\tr (I-H_{dist}) = n-pk$, as required. 

This provides a simple explanation for why the "effective number of parameters" in a one-step distributed linear regression problem is upper bounded by $kp$. Equivalently, the residual "degrees of freedom" of a one-step distributed estimation problem is lower bounded by $n - kp$. Note that with more rounds of communication, we could drive the degrees of freedom up, and hence this is just a heuristic bound. 

\section{Proof of Theorem \ref{noisy}}
\label{pf:noisy}

First, when $\rho = 0$, the MSE reduces to $\sigma^2/k^2\cdot\sum_{i=1}^k\tr(X_i^\top X_i)^{-1}$ which is exactly the MSE of the one-step naive average
estimator $1/k\cdot\sum_{i=1}^k(X_i^\top X_i)^{-1}X_i^\top Y_i$. Second, when $\rho\to\infty$, 
\begin{align*}
\hbeta_{*}&=\left(\sum_{i=1}^k\left(X_i^\top X_i+n_i\rho I\right)^{-1}X_i^\top X_i\right)^{-1}\cdot\sum_{i=1}^k\left(X_i^\top X_i+n_i\rho I\right)^{-1}X_i^\top Y_i\\
&=\left(\sum_{i=1}^k\left(\frac{X_i^\top X_i}{\rho}+n_i I\right)^{-1}X_i^\top X_i\right)^{-1}\cdot\sum_{i=1}^k\left(\frac{X_i^\top X_i}{\rho}+n_i I\right)^{-1}X_i^\top Y_i\\
&\to\left(\sum_{i=1}^k\frac{X_i^\top X_i}{n_i}\right)^{-1}\cdot\sum_{i=1}^k\frac{X_i^\top Y_i}{n_i}=(X^\top X)^{-1}X^\top Y,
\end{align*}
which is the OLS estimator for the whole data set and the MSE also converges to the corresponding MSE $\sigma^2\cdot\tr(X^\top X)^{-1}$. Actually, we can define the MSE as a function of $\rho$  in the following way
\begin{align*}
\psi(\rho)&=\sum_{i=1}^k
\tr\left[\left(\sum_{i=1}^k\left(X_i^\top X_i+n_i\rho I\right)^{-1}X_i^\top X_i\right)^{-2}\left(X_i^\top X_i+n_i\rho I\right)^{-2}X_i^\top X_i\right]\\
&=\sum_{i=1}^k
\tr\left[\left(\sum_{i=1}^k\left(\hSigma_i+\rho I\right)^{-1}\hSigma_i\right)^{-2}\left(\hSigma_i+\rho I\right)^{-2}\cdot\frac{\hSigma_i}{n_i}\right],
\end{align*}
where $\hSigma_i=X_i^\top X_i/n_i$.
For any fixed $\rho\in[0, \infty)$, we can consider a small perturbation $\ep$ at $\rho$, i.e. we can consider the difference between $\psi(\rho+\ep)$ and $\psi(\rho)$. By using the formula
$$\left(I+\ep A\right)^{-1}=\sum_{n=0}^\infty (-1)^nA^n\ep^n=I-\ep A+o(\ep),$$
and we define the following quantity
$$\Delta:=\sum_{i=1}^k\left(\hSigma_i+\rho I\right)^{-1}\hSigma_i.$$
We have the following expansions:
$$\left(\sum_{i=1}^k\left(\hSigma_i+\rho I+\ep I\right)^{-1}\hSigma_i\right)^{-1}=\Delta^{-1}+\ep\Delta^{-1}\left(\sum_{i=1}^k\left(\hSigma_i+\rho I\right)^{-2}\hSigma_i\right)\Delta^{-1}+o(\ep),$$
$$\left(\hSigma_i+\rho I+\ep I\right)^{-2}\hSigma_i=\left(\hSigma_i+\rho I\right)^{-2}\hSigma_i-2\ep\left(\hSigma_i+\rho I\right)^{-3}\hSigma_i+o(\ep).$$
By putting these together, we have
\begin{align*}
&\psi(\rho+\ep)-\psi(\rho)=\\
&\ep\cdot\frac{2k}{n}\tr\left[\Delta^{-1}\sum_{i=1}^k\left(\hSigma_i+\rho I\right)^{-2}\hSigma_i\cdot\Delta^{-2}\sum_{i=1}^k\left(\hSigma_i+\rho I\right)^{-2}\hSigma_i-\Delta^{-2}\sum_{i=1}^k\left(\hSigma_i+\rho I\right)^{-3}\hSigma_i\right]\\
&+o(\ep),
\end{align*}
which means $\psi(\rho)$ is differentiable and the derivative at $\rho$ is 
$$\psi'(\rho)=\frac{2k}{n}\tr\left[\Delta^{-1}\sum_{i=1}^k\left(\hSigma_i+\rho I\right)^{-2}\hSigma_i\cdot\Delta^{-2}\sum_{i=1}^k\left(\hSigma_i+\rho I\right)^{-2}\hSigma_i-\Delta^{-2}\sum_{i=1}^k\left(\hSigma_i+\rho I\right)^{-3}\hSigma_i\right].$$
Actually, we can show that $\psi'(\rho) \leq 0$ holds for all $\rho\in[0, +\infty)$. In order to do that, we need to introduce the so-called Schur complement and its applications on positive semi-definite matrices.
\begin{lemma}[Schur complement condition for positive semi-definiteness]
For any symmetric matrix $M$ of the form
$$M=
\begin{bmatrix}
A         & B\\
B^\top & C
\end{bmatrix},
$$
if $C$ is invertible then the following properties hold:
\begin{enumerate}
\item $M\succ 0$ iff $C\succ 0$ and $A-BC^{-1}B^\top\succ 0$.
\item If $C\succ 0$, then $M\succeq 0$ iff $A-BC^{-1}B^\top\succeq 0$.
\end{enumerate}
\label{schur}
\end{lemma}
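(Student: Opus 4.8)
The plan is to prove both parts simultaneously from the classical block congruence ($LDL^\top$) factorization of $M$, which isolates the Schur complement $S := A - BC^{-1}B^\top$ as a diagonal block. Concretely, I would first write down the identity
$$
M =
\begin{bmatrix} I & BC^{-1}\\ 0 & I\end{bmatrix}
\begin{bmatrix} S & 0\\ 0 & C\end{bmatrix}
\begin{bmatrix} I & 0\\ C^{-1}B^\top & I\end{bmatrix}
=: L D L^\top,
$$
where the rightmost factor is the transpose of the leftmost because $C$, hence $C^{-1}$, is symmetric. Verifying this is a routine block multiplication: $LD$ has top-right block $BC^{-1}\cdot C = B$, and the subsequent multiplication by $L^\top$ restores the top-left block to $S + BC^{-1}B^\top = A$. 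The key structural point is that $L$ is unit upper-triangular, hence invertible with $\det L = 1$, and that the hypothesis that $C$ is invertible is exactly what makes the factorization well-defined.

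Next I would exploit that congruence by an invertible matrix preserves both definiteness and semidefiniteness. For any vector $x = (u^\top, v^\top)^\top$ partitioned conformally with the blocks, setting $y = L^\top x$ gives $x^\top M x = y^\top D y$, and since $L^\top$ is a bijection, $x$ ranges over all nonzero vectors exactly when $y$ does. Therefore $M \succ 0 \iff D \succ 0$ and $M \succeq 0 \iff D \succeq 0$. Because $D$ is block diagonal, $D \succ 0 \iff (S \succ 0 \text{ and } C \succ 0)$ and $D \succeq 0 \iff (S \succeq 0 \text{ and } C \succeq 0)$. Part 1 is then immediate. For part 2, under the standing hypothesis $C \succ 0$, which in particular gives $C \succeq 0$, the condition $D \succeq 0$ collapses to $S \succeq 0$, yielding $M \succeq 0 \iff S \succeq 0$.

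An equivalent and fully self-contained route, which I would use to avoid quoting any external congruence principle, is to complete the square directly in the quadratic form: with the same partition,
$$
x^\top M x = u^\top A u + 2 u^\top B v + v^\top C v = u^\top S u + (v + C^{-1}B^\top u)^\top C\,(v + C^{-1}B^\top u).
$$
From this single identity both directions of both parts follow by choosing $v$ appropriately; for instance $v = -C^{-1}B^\top u$ annihilates the second term and exposes $u^\top S u$, while $u = 0$ exposes $v^\top C v$. I expect essentially no conceptual obstacle here; the only point requiring care is the semidefinite case, where one must remember that $M \succeq 0$ forces only $S \succeq 0$ and not $S \succ 0$, and where the assumption $C \succ 0$ is genuinely used to ensure the cross term can be absorbed. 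I would double-check the transpose identity $(BC^{-1})^\top = C^{-1}B^\top$ and the conformability of the blocks, since that is where a sign or transpose slip would most likely occur.
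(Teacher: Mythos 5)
Your proof is correct. Note, however, that the paper itself never proves this lemma: it is quoted as a classical fact (the Schur complement criterion) and immediately applied in the proof of Theorem \ref{noisy}, so there is no internal argument to compare yours against. Your argument is the standard one and is sound: the block factorization
$$
M=\begin{bmatrix} I & BC^{-1}\\ 0 & I\end{bmatrix}
\begin{bmatrix} A-BC^{-1}B^\top & 0\\ 0 & C\end{bmatrix}
\begin{bmatrix} I & 0\\ C^{-1}B^\top & I\end{bmatrix}
$$
is verified correctly (the identity $(BC^{-1})^\top=C^{-1}B^\top$ does rely on the symmetry of $C$, which you rightly flag), the congruence by the unit-triangular factor preserves both $\succ 0$ and $\succeq 0$ since it is invertible, and the reduction of part 2 under the standing hypothesis $C\succ 0$ is handled properly --- in particular you avoid the common error of concluding strict definiteness of the Schur complement from mere semidefiniteness of $M$. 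The completing-the-square identity you give as a self-contained alternative is also correct and proves both parts without invoking any external congruence principle; either route would serve as a legitimate proof to append to the paper's appendix.
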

In order to show $\psi'(\rho)\leq 0$, it is sufficient to show
\begin{align*}
&\tr\left[\Delta^{-1}\sum_{i=1}^k\left(\hSigma_i+\rho I\right)^{-2}\hSigma_i\cdot\Delta^{-2}\sum_{i=1}^k\left(\hSigma_i+\rho I\right)^{-2}\hSigma_i-\Delta^{-2}\sum_{i=1}^k\left(\hSigma_i+\rho I\right)^{-3}\hSigma_i\right]\\
&=\tr\left[\Delta^{-2}\left(\sum_{i=1}^k\left(\hSigma_i+\rho I\right)^{-2}\hSigma_i\cdot\Delta^{-1}\sum_{i=1}^k\left(\hSigma_i+\rho I\right)^{-2}\hSigma_i-\sum_{i=1}^k\left(\hSigma_i+\rho I\right)^{-3}\hSigma_i\right)\right]\leq 0
\end{align*}
Let $A=\sum_{i=1}^k\left(\hSigma_i+\rho I\right)^{-3}\hSigma_i,$ $B=\sum_{i=1}^k\left(\hSigma_i+\rho I\right)^{-2}\hSigma_i$, then we only need to show
$$\tr\left[\Delta^{-2}\left(A-B^\top\Delta^{-1}B\right)\right]\geq 0.$$
Since $\Delta^{-2}$ is a positive semi-definite matrix, and for two positive semi-definite matrices $X = P^\top P$, $Y=Q^\top Q$ we have $\tr(XY)=\tr(P^\top PQ^\top Q)=\tr(PQ^\top QP^\top)\geq 0$, the remaining work is to show $A-B^\top\Delta^{-1}B\succeq 0$. Now, by using Lemma \ref{schur}, it enough to show 
$$M=
\begin{bmatrix}
A         & B\\
B^\top & \Delta
\end{bmatrix}\succeq 0.
$$
We can write $M$ as sum of matrices $M_i$, where
$$M_i=
\begin{bmatrix}
\left(\hSigma_i+\rho I\right)^{-3}\hSigma_i    & \left(\hSigma_i+\rho I\right)^{-2}\hSigma_i\\
\left(\hSigma_i+\rho I\right)^{-2}\hSigma_i & \left(\hSigma_i+\rho I\right)^{-1}\hSigma_i
\end{bmatrix}.
$$
By using Lemma \ref{schur} again, $M_i\succeq 0$ since
$$\left(\hSigma_i+\rho I\right)^{-3}\hSigma_i-\left(\hSigma_i+\rho I\right)^{-2}\hSigma_i\cdot\left(\hSigma_i+\rho I\right)\hSigma_i^{-1}\cdot\left(\hSigma_i+\rho I\right)^{-2}\hSigma_i=0.$$
Finally we have $M=\sum_{i=1}^kM_i\succeq 0$, i.e. $\psi'(\rho)\leq 0$. The special case when $\rho=0$ is of special interest. In this case, $\Delta=kI$ and we can simplify the derivative
$$\psi'(0)=\frac{2}{nk^2}\cdot\tr\left[\left(\sum_{i=1}^k\hSigma_i^{-1}\right)^2-k\sum_{i=1}^k\hSigma_i^{-2}\right].$$
By using the Cauchy-Schwarz inequality for positive semidefinite matrices
$$\tr(AB)\leq\sqrt{\tr(A^2)\tr(B^2)}\leq\frac{\tr(A^2)+\tr(B^2)}{2},$$
we can easily verify that $\psi'(0)\leq 0$, and equality holds if and only if $X_1^\top X_1=X_2^\top X_2=\cdots=X_k^\top X_k$.

\section{Numerical simulations}
\label{numsa}
\subsection{Relative efficiency for regression}

\begin{figure}
\begin{subfigure}{.4\textwidth}
  \centering
\includegraphics[scale=0.3]{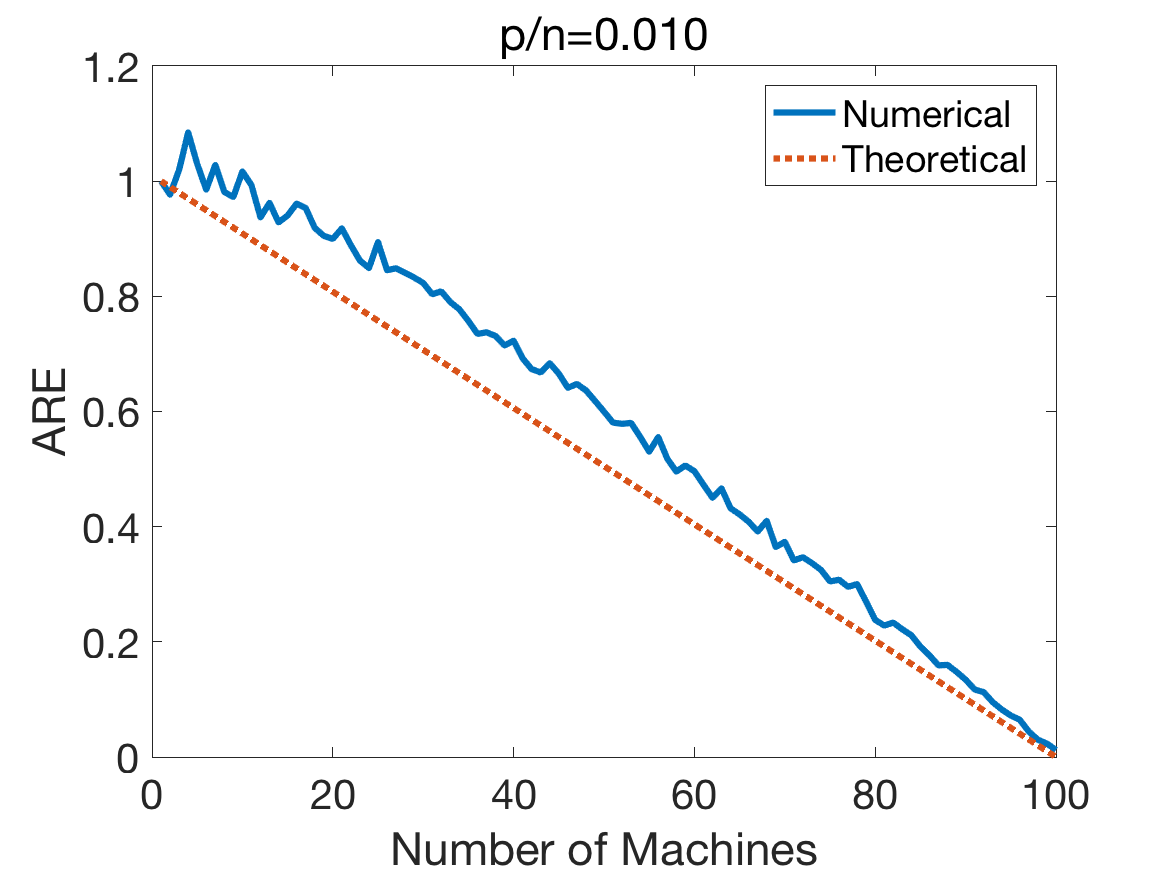}
\end{subfigure}
\begin{subfigure}{.4\textwidth}
  \centering
\includegraphics[scale=0.3]{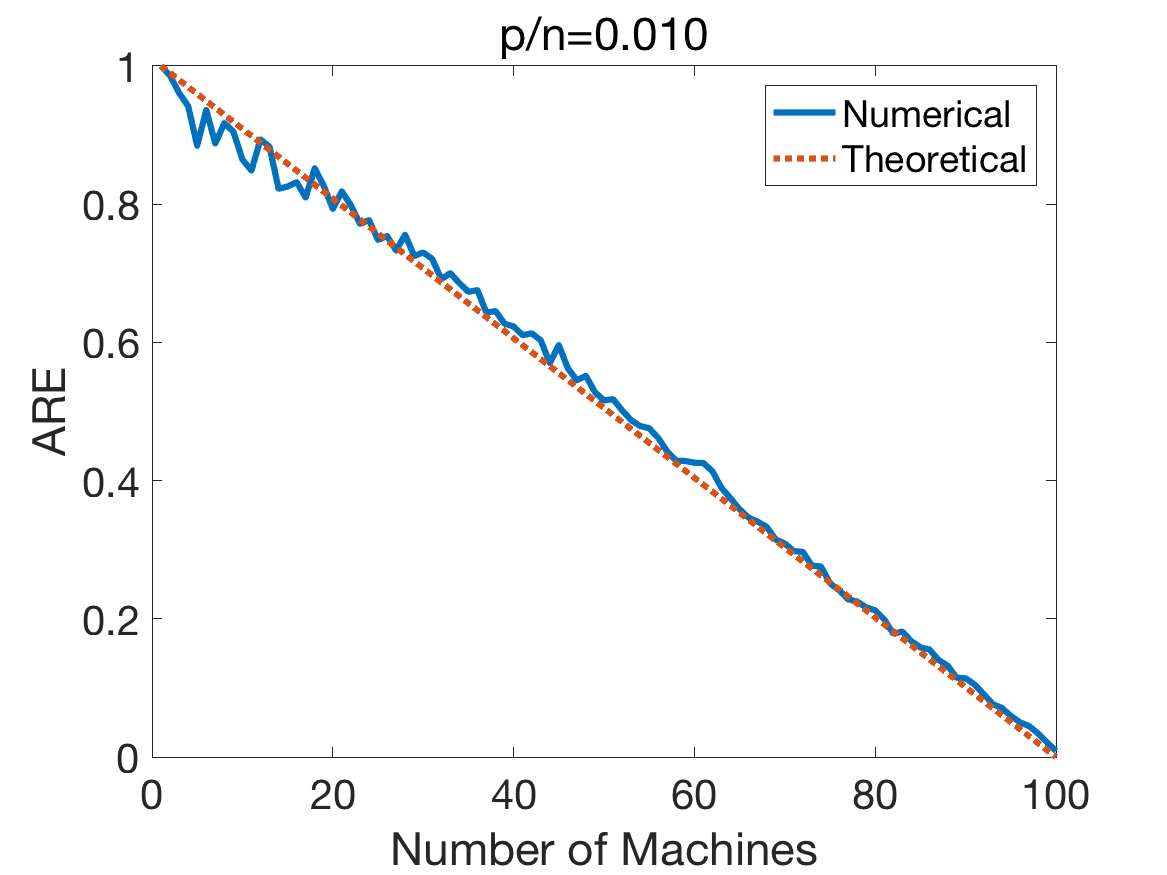}
\end{subfigure}
\caption{Relative efficiency in regression.}
\label{fig:reg}
\end{figure}

Figure \ref{fig:reg} shows a comparison of our theoretical formulas for ARE and realized relative efficiency in a regression simulation. Here we consider regression problems with $Y = X\beta+\ep$, where $X$ is $n\times p$ with iid standard Gaussian entries, $\beta=0$, and $\ep$ has iid standard Gaussian entries. We choose $n>p$, and for each value of $k$ such that $k<n/p$, we split the data into $k$ equal groups. We then show the results of the expression for the realized relative efficiency $\|\hbeta-\beta\|^2/\|\hbeta_{dist}-\beta\|^2$ compared to the theoretical ARE. We take $n=10,000$ and $p=100$.

We observe that the two agree closely.  However, there is more sampling variation than in the previously reported simulations, where we only compared the expected values of the relative efficiency to its asymptotic limit. In particular, the realized relative efficiency can be greater than unity. This is not a contradiction as our theoretical results only concern the expected values. However, we find that the simulations still match the theoretical results quite well.


\subsection{Relative efficiencies for the elliptical model}

\begin{figure}
\begin{subfigure}{.45\textwidth}
  \centering
\includegraphics[scale=0.37]{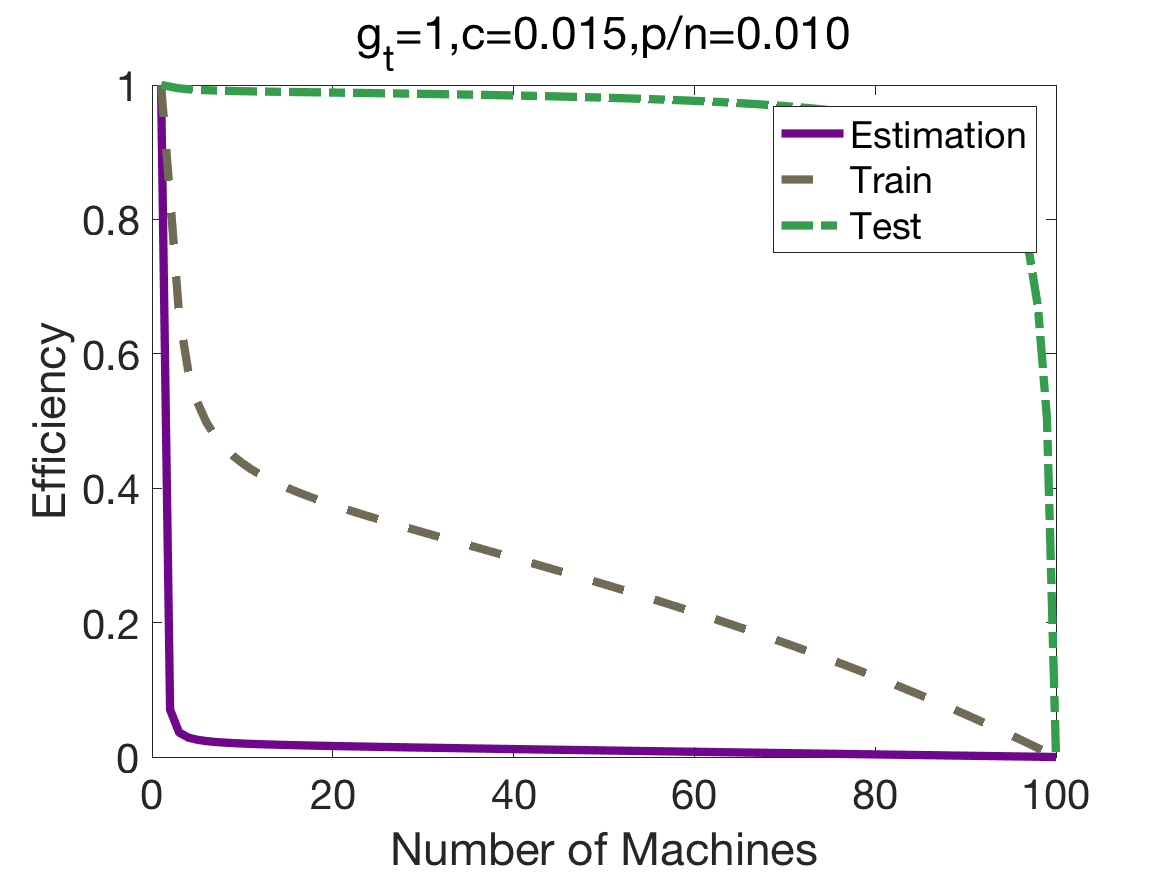}
\end{subfigure}
\begin{subfigure}{.45\textwidth}
  \centering
\includegraphics[scale=0.37]{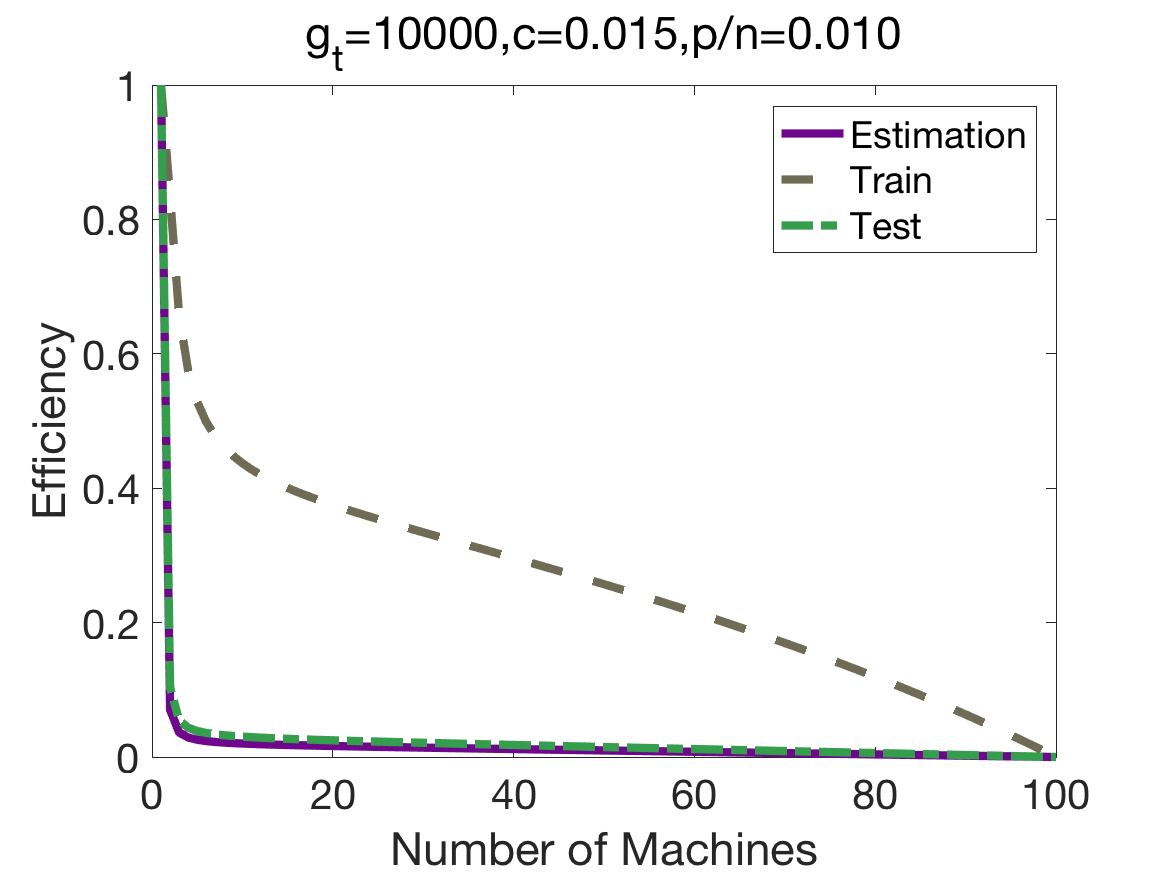}
\end{subfigure}
\caption{Relative efficiency for elliptical model.}
\label{fig:elliptic}
\end{figure}

For the elliptical model, we can also study the relation between different asymptotic efficiencies and show a plot similar to Figure \ref{ef-mp} for the Marchenko-Pastur model. Intuitively, we cannot expect a universal phenomenon in this situation since all efficiencies depend on the distribution $G$. Let us consider the worst-case example from Theorem \ref{ex2}. Figure \ref{fig:elliptic} shows that the asymptotic relative efficiency for out-of-sample prediction could be either very good or as bad as the ARE. 

In the first plot, we take $p=100,n=10000$, while $\alpha=10000$, and $c=0.015$. The test datapoint has magnitude $g_t=1$. In the second plot, we choose the same parameters but change the magnitude of the test datapoint to $g_t=10000$. Intuitively, when $g_t$ is large, the irreducible noise is negligible. Otherwise, the irreducible noise will make the problem easier. This is precisely what we observe in our figure, where in the first case, test error increases only a little (the efficiency is nearly unity), while in the second case test error increases a lot. We also expect this from our formula in Theorem \ref{OE_mp_thm}.

\section{Empirical data analysis}
\label{emp}

\begin{figure}
\begin{subfigure}{.45\textwidth}
  \centering
\includegraphics[scale=0.26]{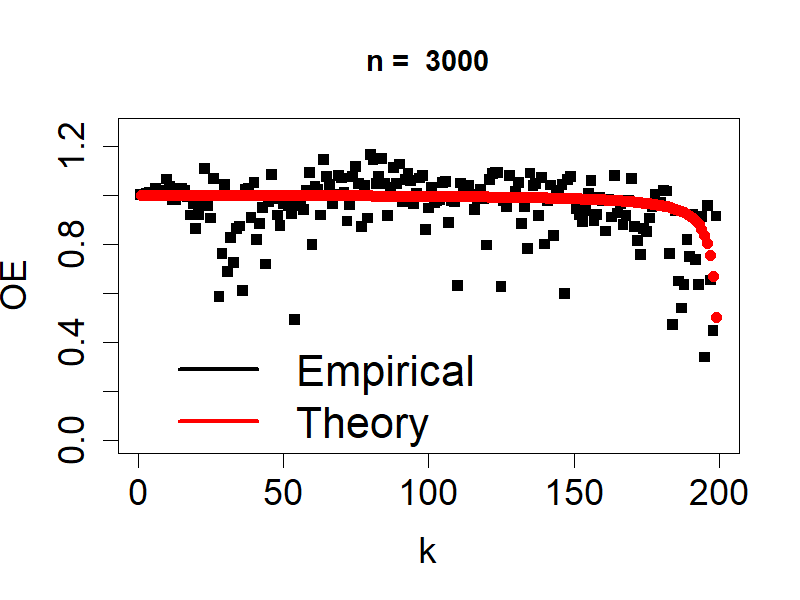}
\end{subfigure}
\begin{subfigure}{.45\textwidth}
  \centering
\includegraphics[scale=0.26]{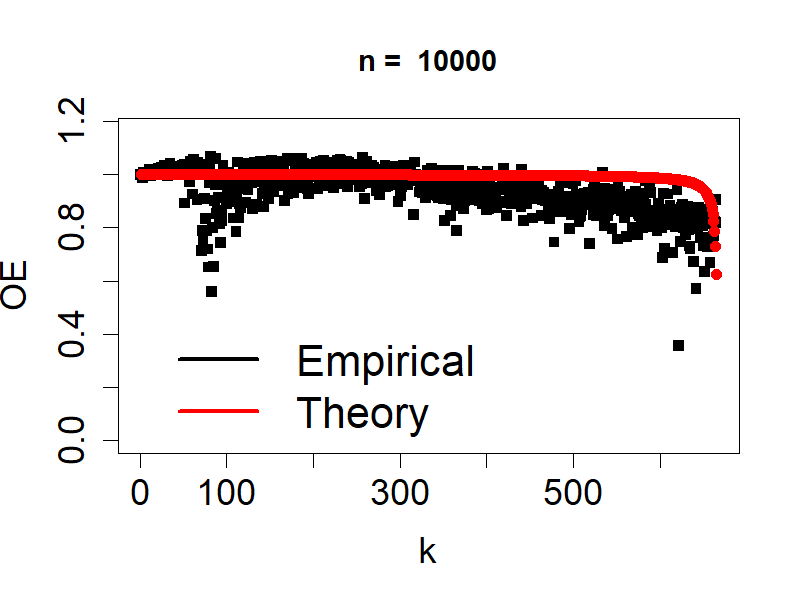}
\end{subfigure}
\caption{NYC flights data.}
\label{fig:ny}
\end{figure}

In this section we present an empirical data example to assess the accuracy of our theoretical results. Specifically, figure \ref{fig:ny} shows a comparison of our theoretical formulas for OE and actual out-of-sample prediction error (test error) on the NYC flights dataset \citep{nycflights13}. We observe a quite good match.

Specifically, we performed the following steps in our data analysis. We downloaded the flights data as included in the nycflights13 R package \citep{nycflights13}. We joined the separate datasets (weather, planes, and airlines). We omitted data points with missing entries. We removed one out of each pair of variables with absolute correlation higher than 0.8. This left a total of $N=60,448$ samples and $p=17$ variables. For $n=3000,10000$, we randomly sampled a training set of size $n$, and a non-overlapping test set of size also equal to $n$. The test set size does not have equal the training set size, and we only followed this protocol for simplicity.

We then fit linear regression estimators to this data in a global and distributed way. For the distributed version, we split the train data as equally as possible into $k$ subsets, for each $k\le n/p$. We then fit a linear regression to each subset, and took a weighted average with the optimal weights. We computed the test error of both the global and the distributed estimators over the test sample, and defined their ratio to be the empirical OE. We compared this to our theoretical formula for the OE, see figure \ref{fig:ny}. 

We observe a quite good match between the theoretical and empirical results. However, the empirical estimate of OE can be larger than unity. This is because of sampling noise. Our results show that $OE\le 1$, but only for the theoretical quantity where we have taken expectations. To get estimators with reduced variance, one could average over multiple Monte Carlo trials. However, those are beyond the our scope.

\end{document}